\theoremstyle{definition}
\newtheorem{theorem}{Theorem}[section]
\newtheorem{remark}[theorem]{Remark}
\newtheorem{proposition}[theorem]{Proposition}
\newtheorem{definition}[theorem]{Definition}
\newtheorem{lemma}[theorem]{Lemma}
\newtheorem{corollary}[theorem]{Corollary}
\newtheorem{assumption}[theorem]{Assumption}
\providecommand{\keywords}[1]{\textbf{\textit{Keywords:}} #1}
\DeclareMathOperator*{\mn}{\mathcal{N}}
\DeclareMathOperator*{\mq}{\mathcal{Q}}
\DeclareMathOperator*{\mqc}{\overline{\mathcal{Q}}}
\DeclareMathOperator*{\mm}{\mathcal{M}}
\DeclareMathOperator*{\mt}{\mathbb{T}}
\DeclareMathOperator*{\Mt3}{{\mathbb{T}^3}}
\mathchardef\mhyphen="2D
\DeclareMathOperator*{\s2}{\mathbb{S}^2}
\DeclareMathOperator*{\r3}{\mathbb{R}^3}
\newcommand{\mf}{\mathcal{F}}
\newcommand{\mg}{\mathcal{G}}
\newcommand{\ws}{\overset{*}{\rightharpoonup}}
\newcommand{\h}[1]{\widehat{#1}}
\newcommand{\wl}{\rightharpoonup}
\DeclareMathOperator*{\sm3}{\text{Sym}_0(3)}
\DeclareMathOperator*{\so3}{\text{SO}(3)}
\DeclareMathOperator*{\supp}{supp}
\DeclareMathOperator*{\argmin}{arg \, min}
\begin{document}
\sloppy
\title{Oseen-Frank-type theories of ordered media as the $\Gamma$-limit of a non-local mean-field free energy}
\author{Jamie M. Taylor\footnote{Address for correspondence: Mathematical Institute, Radcliffe Observatory Quarter, Woodstock Road, Oxford, OX2 6GG. Email: \texttt{jamie.taylor@maths.ox.ac.uk}}}
\date{}
\maketitle
\begin{abstract}
In this work we recover the Oseen-Frank theory of nematic liquid crystals as a $\Gamma$-limit of a particular mean-field free energy as the sample size becomes infinitely large. The Frank constants are not necessarily all equal. Our analysis takes place in a broader framework however, also providing results for more general systems such as biaxial or polar molecules. We start from a mesoscopic model describing a competition between entropy and a non-local pairwise interactions between molecules. We assume the interaction potential is separable so that the energy can be reduced to a model involving a macroscopic order parameter. We assume only integrability and decay properties of the macroscopic interaction, but no regularity assumptions are required. In particular, singular interactions are permitted. The analysis becomes significantly simpler on a translationally invariant domain, so we first consider periodic domains with increasing domain of periodicity. Then we tackle the problem on a Lipschitz domain with non-local boundary conditions by considering an asymptotically equivalent problem on periodic domains. We conclude by applying the results to a case which reduces to the Oseen-Frank model of elasticity, and give expressions for the Frank constants in terms of integrals of the interaction kernel. 
\end{abstract}
\keywords{Liquid crystals, Mean-field free energy, Oseen-Frank, Gamma convergence}
\section{Introduction}

In the mean-field free energy we consider particles inhabiting some configurational space, which for the sake of this work will be taken as a location $x \in \Omega\subset\mathbb{R}^3$ and some orientation-like component $m \in \mn$, where $(\mn,\mu)$ is a finite measure space. For the motivating case of liquid crystals, particles will be rigid objects with some symmetry, so $\mn$ will be taken typically as $\text{SO}(3)$ for general molecules or $\s2$ for molecules with axial symmetry. In the case of a system at constant concentration, we can describe the system by probability distributions on the state space, that is some $f :\Omega\to \mathcal{P}(\mn)$. In this case $f$ describes the distribution of molecules at location $x$. We then consider minimisers of the mean-field free energy functional \cite{maier1959einfache}
\begin{equation}\label{eqMeanField}
\begin{split}
&\int_{\Omega\times\mn}f(x,m)\ln f(x,m)\,d(x,m)-\frac{1}{2}\int_{\Omega\times\mn}\int_{\Omega\times\mn}\mathcal{K}(x-y,m,m')f(x,m)f(y,m')\,d(x,m)\,d(y,m').
\end{split}
\end{equation}
The left-hand term represents the entropic part and favours a disordered system. The right-hand term represents pairwise molecular interactions, with $\mathcal{K}(z,m,m')$ corresponding to the interaction energy of two molecules oriented as $m,m'\in\mn$ respectively with centres of mass spanned by $z\in\r3$. The pairwise interactions will typically favour an ordered state, and it is the competition between these two terms leads to phase transitions mediated by temperature and concentration, although for the sake of this work these parameters have been absorbed into the interaction $\mathcal{K}$.

The Oseen-Frank like elastic theories we will be considering are models to describe similar systems at a much larger, macroscopic scale \cite{oseen1933theory}. They admit various interpretations, and the following discussion is relevant for the case considered in this work. We define a macroscopic order parameter $b$ as an average of some microscopic quantity. To be more specific, we assume a finite dimensional vector space $V$, and $a:\mn \to V$, and define for all $x$ in our domain 
\begin{equation}
b(x)=\int_{\mn}f(x,m)a(m)\,d\mu(m).
\end{equation}
Typically $b$ will live in some open subset of $V$. We assume that the energy of the system at the macroscopic level can be described as 
\begin{equation}\label{eqLdG}
\int_{\Omega}W(\nabla b(x))+\psi(b(x))\,dx,
\end{equation}
where $W$ is the elastic energy and $\psi$ is the bulk energy. By considering scaling arguments it can be argued that for a sufficiently large sample, the bulk energy overwhelms the elastic energy so we consider the constrained case of $b$ in the minimising manifold, $\mm =\argmin(\psi)$. Thus rather than considering $b$ in $V$ (or some open subset of it), we consider only the restricted class $b :\Omega\to\mm$ and minimise a purely elastic energy
\begin{equation}
\int_{\Omega}W(\nabla b(x))\,dx.
\end{equation}
These are the limiting models that we will be obtaining in this work, although they will be obtained from the non-local free energy \eqref{eqMeanField} rather than the local form \eqref{eqLdG}. The local and non-local energies (\ref{eqMeanField},\ref{eqLdG}) are not unrelated themselves however, and Taylor approximation arguments can provide a heuristic justification of the local models from the mean-field energy \cite{han2013microscopic}. In the case of uniaxial nematic liquid crystals, $\mm \sim\mathbb{R}P^2$, or in simpler cases $\mm$ is taken as $\s2$. In the case of biaxial nematics, $\mm$ is taken as $\text{SO}(3)$. 

The exact form of $W$ will vary depending on the system being considered, but the Landau expansion argument can provide a phenomenological basis. The undistorted state ($\nabla b=0$) is typically the ground state and $W$ is assumed to be sufficiently regular, so at least for configurations near the ground state we can replace $W$ with its second order Taylor approximation, which is constrained to respect the symmetry of the system. We will show in this work that certain quadratic energies for $b:\Omega\to\mm$ can be justified from the mean-field free energy on large domains via $\Gamma$-convergence, under assumptions on the interaction kernel $\mathcal{K}$. Explicitly, we consider a rescaled version of the energy in \eqref{eqMeanField} on a domain $\frac{1}{\epsilon}\Omega$ and take the asymptotics as $\epsilon \to 0$, obtaining the elastic energy for $b \in W^{1,2}(\Omega,\mm)$ as 
\begin{equation}
\frac{1}{4}\int_{\Omega}L\nabla b(x)\cdot \nabla b(x)\,dx
\end{equation} 
for an appropriate tensor $L$. The analysis will be presented in a very general framework, but we will conclude the paper by considering as a concrete case a set of interaction kernels that give rise to the Oseen-Frank elastic energy for uniaxial nematic liquid crystals with two distinct elastic constants. For analogous results on the convergence of solutions to minimisers of the local functional \eqref{eqLdG} in the case of nematic liquid crystals the reader is directed to \cite{majumdar2010landau}.

In the case where $\mn = \s2$ and $a(p)=p\otimes p-\frac{1}{3}I$, an asymptotic description of minimisers of \eqref{eqMeanField} is given by Liu and Wang \cite{liu2016oseen}, providing a first step towards a rigorous link between the mean-field and Oseen-Frank models. Within their work they consider a restricted class of interaction potentials, and show that critical points of \eqref{eqMeanField} converge to weakly harmonic maps in an appropriate sense. Weakly harmonic maps arise as solutions to the Euler-Lagrange equation for the minimisation of
\begin{equation}\label{eqHarmonic}
\int_\Omega |\nabla n(x)|^2\,dx
\end{equation}
over $n \in W^{1,2}(\Omega,\s2)$ subject to a Dirichlet boundary condition. Furthermore, they show that minimisers of \eqref{eqMeanField} converge to minimising harmonic maps, that is minimisers of \eqref{eqHarmonic} over its admissible class. 

Our work extends their ideas into several directions. Firstly, our convergence result is stated as a $\Gamma$-limit of the energy, providing information about the behaviour of the system out of equilibrium (see \cite{braides2002gamma} for a thorough reference on techniques of $\Gamma$-convergence). Secondly, we permit much more varied configuration spaces $\mm$ than just $\s2$, and more general kernels $\mathcal{K}$. This allows a description of more general molecules such as those with biaxial or polar character, and furthermore reclaims in a particular case the Oseen-Frank energy with multiple elastic constants, so that we obtain as our limiting energy 
\begin{equation}
\frac{1}{4}\int_{\Omega}K_1(\text{div }n)^2+K_2|n\times \text{curl }n|^2+K_3(n\cdot \text{curl }n)^2\,dx
\end{equation}
with $K_1=K_3\neq K_2$ for $n \in W^{1,2}(\Omega,\s2)$ with Dirichlet boundary conditions. We note that the harmonic map problem corresponds to $K_1=K_2=K_3$, the so-called {\it one-constant approximation}. Finally, within our work we are able to drop almost any regularity assumption, with the translational part of a separable interaction potential only required to be measurable, non-negative definite and satisfy a certain polynomial decay. 

In the mean-field free energy we {\it a priori} only have that $b \in L^\infty(\Omega,\mqc)$. This lack of regularity means that we cannot impose Dirichlet boundary conditions in the sense of traces as if $b$ were a Sobolev function. To this end, we consider a ``thick" boundary condition specified on a set of positive volume, so that there is a subdomain $\Omega_\epsilon\subset \Omega$, so that $b$ is constrained to a fixed value on $\Omega\setminus \Omega_\epsilon$. Under the assumptions we consider (see \Cref{subsubsecAssumptionsGeneral}) this gives, for each $\epsilon>0$, a neighbourhood of $\partial\Omega$ in which $b$ has constrained values. In the limit as $\epsilon \to 0$, we will see this reduce to a standard Dirichlet condition.

In our analysis we also consider the effect of electrostatic interactions. This has a more detailed discussion in \Cref{subsubsecElectrostatic}. In the absence of external charges and at equilibrium, this adds a term to the energy given by 
\begin{equation}
-\frac{1}{2}\int_{\Omega} A(b(x))\nabla \phi(x) \cdot \nabla \phi(x)\,dx,
\end{equation}
where $A:\mqc\to\mathbb{R}^{3\times 3}$ represents the anisotropy of the media, and the electrostatic $\phi$ is required to satisfy $\nabla \cdot (A(b)\nabla \phi)=0$ and a Dirichlet boundary condition. As $\phi$ is uniquely defined by $b$, the boundary condition and the differential equation, we can view this term as a function of $b$ alone, which is in fact continuous so does not pose difficulties in our analysis.

\subsection{Summary of the paper}
In \Cref{subsecDefinition} we introduce some preliminary definitions and assumptions for the model. In particular, by assuming a separability of the interaction kernel and considering a restricted class of probability distributions, we start from the mean-field free energy \eqref{eqMeanField} and obtain a simpler free energy in the order parameter $b$ with equivalent minimisers. The order parameter $b$ is constrained to live in a bounded convex set $\mqc$, relating to its definition as an average. The energy for $b \in L^\infty(\Omega,\mqc)$ is given as 
\begin{equation}
\int_{\Omega}\psi_s(b(x))\,dx - \frac{1}{2}\int_{\Omega}\int_{\Omega}K(x-y)b(x)\cdot b(y)\,dx\,dy.
\end{equation}
Here $\psi_s$ is a macroscopic analogue of entropy, as defined in \cite{taylor2016maximum}. 

Next in \Cref{secPeriodicDomains} we consider an energy which formally corresponds to the case where $\Omega=\r3$, and the system is $\frac{2\pi}{\epsilon}$-periodic in the three coordinate directions. We denote the 3-torus as $\mathbb{T}^3=[0,2\pi]^3$ with opposite faces identified. Since the integrals are unbounded, we consider the energy per periodic cell $\frac{1}{\epsilon}\mt^3$, 
\begin{equation}\label{eqEnergyPrime}
\int_{\frac{1}{\epsilon}\mt^3}\psi_s(b(x))\,dx -\frac{1}{2}\int_{\frac{1}{\epsilon}\mt^3}\int_{\r3}K(x-y)b(x)\cdot b(y)\,dy\,dx. 
\end{equation}
While this is a physically unrealistic situation, the mathematical analysis admits a simpler treatment than the case of bounded domains, and will be essential for considering general bounded domains. Under a rescaling, the energy for $\epsilon>0$ that we consider is 
\begin{equation}\label{eqPeriodicEnergy}
\mf_\epsilon(b)=\frac{1}{\epsilon^2}\int_{\mt^3}\psi(b(x))\,dx +\frac{1}{2\epsilon^5}\int_{\mt^3}\int_{\r3}K\left(\frac{x-y}{\epsilon}\right)\cdot(b(x)-b(y))^{\otimes 2}\,dy\,dx.
\end{equation}
Here $\psi:\mqc\to\mathbb{R}$ is an appropriate bulk potential. We use the notation here and throughout the paper for a symmetric operator $A$ and vector $v$ that $A\cdot (v)^{\otimes 2}=Av\cdot v$, denoting the inner product of tensors between $A$ and $v\otimes v=v^{\otimes 2}$. Up to additive and multiplicative constant, the expressions \eqref{eqEnergyPrime} and \eqref{eqPeriodicEnergy} are equal, however the form in \eqref{eqPeriodicEnergy} provides a more convenient form for our analysis. Loosely speaking it writes the energy as a bulk term weighted by $\frac{1}{\epsilon^2}$, and a finite difference quotient that will, in the limit, become a gradient. 

In \Cref{subsecEstimates} we provide some of the necessary estimates and compactness results for our analysis. We do not need to impose any kind of regularity property on $K$ for our analysis, and our assumptions permit singularities, provided they are integrable. The key point is to find appropriate smooth, periodic functions $\varphi_\epsilon$ so that $\varphi_\epsilon \to \delta$ in $\mathcal{D}'(\mt^3)$, so that we can bound 
\begin{equation}
\int_{\Omega}|\nabla (\varphi_\epsilon * b)(x)|^2\,dx\leq C \frac{1}{\epsilon^5}\int_{\mt^3}\int_{\r3}K\left(\frac{x-y}{\epsilon}\right)\cdot(b(x)-b(y))^{\otimes 2}\,dy\,dx
\end{equation}
as seen in \Cref{propW12Bound}. From this we obtain weak-$W^{1,2}$ compactness of $\varphi_\epsilon*b_\epsilon$ if $\mf_\epsilon (b_\epsilon)$ is bounded. We then show that $\varphi_\epsilon*b_\epsilon - b_\epsilon \to 0$ in $L^2$ if $\mf_\epsilon(b_\epsilon)$ is bounded. Furthermore the weighting on the bulk term implies that if the energy is bounded, then $b_\epsilon \to \psi^{-1}(0)=\mm$, which gives us our compactness theorem for the energy:

\begin{theorem}[Compactness]\label{theoremPeriodicCompactness}
Let $b_{\epsilon} \in L^\infty({\mt^3},\mqc)$ be such that $\mf_\epsilon(b_\epsilon)$ is uniformly bounded. Then there exists some $b \in W^{1,2}({\mt^3},\mm)$ and a subsequence $\epsilon_j$ so that $b_{\epsilon_j}\overset{L^2}{\to} b$.
\end{theorem}

Once we have our compactness result, the $\Gamma$-convergence result is more straightforward. We can re-write the bilinear term in the energy as a difference quotient, so that if ${D_{-\epsilon z}b=\frac{1}{\epsilon|z|}(b(x)-b(x-\epsilon z))}$,
\begin{equation}
\begin{split}
&\frac{1}{\epsilon^5}\int_{\mt^3}\int_{\mt^3}K\left(\frac{x-y}{\epsilon}\right)\cdot \left(b(x)-b(y)\right)^{\otimes 2}\,dy\,dx\\
=&\int_{\mt^3}\int_{\r3}|z|^2K(z)\cdot (D_{-\epsilon z}b(x))^{\otimes 2}\,dz\,dx.
\end{split}
\end{equation}
Using the well behaved properties of difference quotients on $W^{1,2}(\mt^3,\mathbb{R}^k)$, we show that if $b \in W^{1,2}(\mt^3,V)$, then the limit of this as $ \epsilon \to 0$ is given by 
\begin{equation}
\begin{split}
&\int_{\mt^3}\int_{\r3}K(z)\cdot (z\cdot \nabla b(x))^{\otimes 2}\,dz\,dx\\
=& \int_{\mt^3}L\nabla b(x)\cdot \nabla b(x)\,dx
\end{split}
\end{equation}
for an appropriate tensor $L$ (\Cref{propLimsup}). This fact allows us to use constant recovery sequences to show the limsup inequality for our $\Gamma$-limit. 

For the liminf inequality, the key point is to observe that for $g(z)=\lambda_{\min}(K(z))$, that our method of convergence and energy bounds imply that the map
\begin{equation}
(z,x)\mapsto |z|g(z)^\frac{1}{2}D_{-{\epsilon_j}z}b_{\epsilon_j}(x)
\end{equation} 
converges weakly in $L^2(\r3\times\mt^3)$ to $(z,x)\mapsto -g(z)^\frac{1}{2}z\cdot \nabla b(x)$ as $\epsilon_j \to 0$. From this point, the liminf inequality becomes a straightforward application of standard lower semicontinuity theorems and is given in \Cref{propLiminf}. Combining these results gives us our next theorem:

\begin{theorem}[$\Gamma$-convergence with periodic domains]
We have that $\mf_\epsilon \overset{\Gamma}{\to} \mf$, where 
\begin{equation}
\mf(b)=\frac{1}{4}\int_{\mt^3}L\nabla b(x)\cdot \nabla b(x)\,dx
\end{equation}
if $b \in W^{1,2}(\mt^3,\mm)$, and $+\infty$ otherwise. The method of convergence is $L^2$-strong convergence. 
\end{theorem}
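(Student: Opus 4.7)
The plan is to establish the $\Gamma$-convergence via the standard two inequalities, using Theorem~1.1 to confine the liminf analysis to limits already known to lie in $W^{1,2}(\mt^3,\mm)$.

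\textbf{Limsup inequality.} For $b\in W^{1,2}(\mt^3,\mm)$ I would try the constant recovery sequence $b_\epsilon\equiv b$. Since $\mm=\psi^{-1}(0)$, the bulk term vanishes for every $\epsilon$. Rewriting the interaction term in finite-difference form as in the excerpt,
\begin{equation*}
\tfrac{1}{2\epsilon^5}\int_{\mt^3}\!\int_{\r3}K\!\left(\tfrac{x-y}{\epsilon}\right)\!\cdot(b(x)-b(y))^{\otimes 2}\,dy\,dx=\tfrac{1}{2}\int_{\mt^3}\!\int_{\r3}|z|^2K(z)\cdot(D_{-\epsilon z}b(x))^{\otimes 2}\,dz\,dx,
\end{equation*}
I would pass to the limit by dominated convergence in $z$: the inner $L^2(\mt^3)$-norm of $D_{-\epsilon z}b$ is bounded uniformly in $\epsilon$ by $\|\nabla b\|_{L^2}$ for Sobolev $b$, the weight $|z|^2\|K(z)\|$ is integrable by the decay hypothesis on $K$, and for each fixed $z$ one has $D_{-\epsilon z}b\to -z\cdot\nabla b$ strongly in $L^2(\mt^3)$. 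Identifying $L$ through the $z$-moments of $K$ then gives $\limsup\mf_\epsilon(b)\leq\mf(b)$. If $b\notin W^{1,2}(\mt^3,\mm)$ the inequality is vacuous.

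\textbf{Liminf inequality and main obstacle.} For $b_\epsilon\to b$ in $L^2$ I may assume the liminf is finite, extract a subsequence realising it with $\mf_\epsilon(b_\epsilon)$ uniformly bounded, and appeal to Theorem~1.1 to conclude $b\in W^{1,2}(\mt^3,\mm)$; I then discard the non-negative bulk term. Using the same finite-difference rewriting, the interaction contribution is a weighted $L^2(\r3\times\mt^3)$ norm of the field $|z|D_{-\epsilon z}b_\epsilon(x)$ with matrix weight $K(z)$, while the target $\int_{\mt^3}L\nabla b\cdot\nabla b$ is the same weighted norm of $-z\cdot\nabla b$. The main obstacle is that the weak convergence stated in the excerpt involves only the scalar weight $g(z)=\lambda_{\min}(K(z))$, whereas the liminf requires the full matrix-valued quadratic form in $K(z)$. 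To close this gap I would apply an Ioffe-type lower-semicontinuity theorem to the convex non-negative integrand $(z,x,v)\mapsto|z|^2K(z)\cdot v^{\otimes 2}$, using the scalar weak convergence both to supply an $L^2$-weak limit for $|z|D_{-\epsilon z}b_\epsilon$ on the support of $g$ and to provide a uniform weighted $L^2$-bound. Positive semidefiniteness of $K(z)$ gives convexity in $v$, which is precisely the structural hypothesis needed for such a semicontinuity result to apply without any regularity on $K$.
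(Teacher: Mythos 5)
Your proposal is correct and follows essentially the same route as the paper: constant recovery sequences with the difference-quotient limit (\Cref{propLimsup}) for the limsup, and, for the liminf, the compactness theorem plus weak $L^2$ convergence of the $g^{1/2}$-weighted difference quotients followed by lower semicontinuity of a convex quadratic form, exactly as in \Cref{propLiminf}. The only difference is cosmetic: where you invoke an Ioffe-type theorem localized to $\supp g$, the paper uses the assumption $\lambda_{\max}(K)\leq Mg$ to absorb $g^{1/2}$ into the weakly converging field and apply standard weak lower semicontinuity with the bounded matrix weight $K/g$.
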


In \Cref{secBoundedDomains} we now turn to the case where $\Omega$ is a bounded domain in $\mathbb{R}^3$. As mentioned previously we take a ``thick" boundary condition so that for $\epsilon >0$, $b$ is only free to vary on a subdomain $\Omega_\epsilon\subset \Omega$, with $\Omega_\epsilon$ sufficiently separated from $\partial\Omega$. We also require that $\Omega_\epsilon \to\Omega$ in a precise way. The difficulty that would arise on a bounded domain is that convolutions are generally poorly behaved at the boundary of a domain, even if convolving against a smooth function. To this end, we extend any admissible $b \in L^\infty(\Omega,\mqc)$ to some $b\in W^{1,2}_0([0,2\pi]^3,\mathbb{R}^k)$, abusing notation to identify $b$ with its extension. Since $W^{1,2}_0([0,2\pi]^3,\mathbb{R}^k)$ functions can be identified with functions in $W^{1,2}(\mt^3,\mathbb{R}^k)$, we are now able to embed our problem into the periodic case, subject to a constraint corresponding to the boundary condition. The bulk of \Cref{subsecGammaBoundedDomain} is dedicated to showing that, by identifying each admissible $b$ with its extension in $W^{1,2}(\mt^3,\mathbb{R}^k)$,
\begin{equation}
\begin{split}
&\frac{1}{\epsilon}\int_\Omega\psi_s(b(x))\,dx-\frac{1}{2\epsilon^5}\int_{\Omega}\int_{\Omega}K\left(\frac{x-y}{\epsilon}\right)b(x)\cdot b(y)\\
=&\frac{1}{\epsilon^2}\int_{\Omega_\epsilon}\psi(b(x))-c_5\,dx+\int_{\mt^3}\int_{\mt^3}K_\epsilon(x-y)(b(x)-b(y))\cdot (b(x)-b(y))\,dx\,dy+R_\epsilon(b)+m_\epsilon
\end{split}
\end{equation}
for some constants $m_\epsilon$ and a function $R_\epsilon(b)$ which tends to zero uniformly in $b$ (\Cref{theoremEquivalentEnergies}). This means we can instead consider an asymptotically equivalent energy in a simpler, periodic domain. We also include electrostatic interactions which provide a continuous, although non-local, term depending on $b$. In this case, we have the energy to be minimised $\mg_\epsilon$ in $b$ and the electrostatic potential $\phi$, uniquely determined by $b$ and Maxwell's equations as 
\begin{equation}
\begin{split}
\mg_\epsilon(b,\phi)=&\frac{1}{\epsilon^2}\int_{\Omega_\epsilon}\psi(b(x))\,dx+\frac{1}{2\epsilon^2}\int_{\mt^3}\int_{\mt^3}K_\epsilon(x-y)\cdot (b(x)-b(y))^{\otimes 2}\,dx\,dy\\
&-\frac{1}{2}\int_{\Omega}A(b(x))\nabla \phi(x)\cdot \nabla \phi(x)\,dx,
\end{split}
\end{equation}
where we have the constraint on $\phi$ that 
\begin{equation}
\nabla \cdot (A(b)\nabla \phi)=0,
\end{equation}
plus Dirichlet boundary conditions. We can write this as 
\begin{equation}
\mg_\epsilon(b,\phi)=\frac{1}{\epsilon^2}\int_{\Omega_\epsilon}\psi(b(x))\,dx+\frac{1}{2\epsilon^2}\int_{\mt^3}\int_{\mt^3}K_\epsilon(x-y)\cdot (b(x)-b(y))^{\otimes 2}\,dx\,dy+\mathcal{E}(b),
\end{equation}
where $\mathcal{E}(b)$ is continuous with respect to $L^2$ convergence.

We then apply our estimates and lower semicontinuity results from the periodic case to obtain the $\Gamma$-convergence result for bounded domains,
\begin{theorem}
The functionals $\mathcal{G}_\epsilon \overset{\Gamma}{\to}\mathcal{G}$, where 
\begin{equation}
\mathcal{G}(b)=\int_{\mt^3}\frac{1}{4}L\nabla b(x)\cdot \nabla b(x)\,dx +\frac{1}{2}\int_{\Omega}A(b)^{-1}D(x)\cdot D(x)\,dx +\mathcal{E}(b)
\end{equation}
if $b \in W^{1,2}(\mt^3,\mathbb{R}^k)$ with $b(x) \in \mm$ for almost every $x \in \Omega$ and $b=b_0$ on $\mt^3\setminus\Omega$, and is $+\infty$ otherwise. The mode of convergence is $L^2$ strong convergence. Furthermore let $b_\epsilon \overset{L^2}{\to} b$, and denote the solutions of the maximisation problem defining $\mathcal{E}(b_\epsilon)$ as $\Phi_\epsilon$. Then $\text{div}(A(b_\epsilon)\Phi_\epsilon)=0$ and $\Phi_\epsilon|_{\partial\Omega}=\phi_0$. Then $\Phi_\epsilon\overset{W^{1,2}}{\to}\Phi$, where $\text{div }(A(b)\nabla \Phi)=0$  and $\Phi|_{\partial\Omega}=\phi_0$.  
\end{theorem}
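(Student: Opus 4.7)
The plan is to import the $\Gamma$-convergence for periodic domains established in \Cref{secPeriodicDomains} via the asymptotic identification in \Cref{theoremEquivalentEnergies}, which writes $\mathcal{G}_\epsilon$, up to an additive constant $m_\epsilon$ and a remainder $R_\epsilon(b) \to 0$ uniformly in $b$, as the periodic-form functional $\mf_\epsilon$ applied to the periodic extension of $b$ plus the $L^2$-continuous electrostatic term $\mathcal{E}(b)$, subject to the hard constraint $b = b_0$ on $\mt^3 \setminus \Omega_\epsilon$. The $\Gamma$-convergence of $\mathcal{G}_\epsilon$ then effectively reduces to that of $\mf_\epsilon$ with an added Dirichlet constraint and a continuous perturbation, both of which behave well under $\Gamma$-limits.

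For compactness, boundedness of $\mathcal{G}_\epsilon(b_\epsilon, \Phi_\epsilon)$, combined with $L^2$-continuity (hence boundedness) of $\mathcal{E}$ on the bounded set $\mqc$ and the uniform vanishing of $R_\epsilon$, forces $\mf_\epsilon(b_\epsilon)$ to be uniformly bounded, so \Cref{theoremPeriodicCompactness} yields a subsequence $b_{\epsilon_j} \to b$ in $L^2(\mt^3)$ with $b \in W^{1,2}(\mt^3, \mm)$. The boundary condition $b = b_0$ on $\mt^3 \setminus \Omega$ is preserved in the limit because $b_{\epsilon_j} = b_0$ on $\mt^3 \setminus \Omega_{\epsilon_j}$ and $|\Omega \setminus \Omega_{\epsilon_j}| \to 0$. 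The $\liminf$ inequality then follows by combining \Cref{propLiminf} applied to $\mf_\epsilon$ with $L^2$-continuity of $\mathcal{E}$ and $\sup_b |R_\epsilon(b)| \to 0$.

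For the $\limsup$ inequality, given an admissible target $b$ I would first approximate $b$ in $W^{1,2}(\mt^3)$ by a sequence $b^{(k)}$ still taking values in $\mm$ on $\Omega$ and equal to $b_0$ on a full neighbourhood of $\mt^3 \setminus \Omega$. Once $\epsilon$ is small enough that $\Omega_\epsilon$ contains $\{b^{(k)} \neq b_0\}$, the constant sequence $b^{(k)}$ is admissible for $\mathcal{G}_\epsilon$, and \Cref{propLimsup} gives convergence of the $\mf_\epsilon$ part; continuity of $\mathcal{E}$ and vanishing of $R_\epsilon$ then give $\mathcal{G}_\epsilon(b^{(k)}) \to \mathcal{G}(b^{(k)})$. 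A diagonal argument in $k$ completes the recovery sequence. The principal obstacle is precisely this approximation step: naive convolution or cut-off does not respect the constraint $b \in \mm$, and depending on the topology of $\mm$ smooth density in $W^{1,2}(\Omega, \mm)$ may fail. The trace match $b|_{\partial \Omega} = b_0|_{\partial \Omega}$ does however allow a cut-off and retraction argument on a shrinking collar $\Omega \setminus \Omega_\epsilon$, combined with a manifold-valued density result of the type available when the target has sufficiently connected homotopy structure.

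The additional statement on the electrostatic potentials is a standard stability result for linear uniformly elliptic equations with convergent coefficients. From uniform ellipticity and boundedness of $A$ and the fixed Dirichlet datum $\phi_0$, the $\Phi_\epsilon$ are uniformly bounded in $W^{1,2}(\Omega)$; extracting a weakly convergent subsequence and passing to the limit in the weak formulation of $\nabla \cdot (A(b_\epsilon) \nabla \Phi_\epsilon) = 0$, using $A(b_\epsilon) \to A(b)$ almost everywhere (along a further subsequence from $b_\epsilon \to b$) together with dominated convergence, identifies the limit as the unique solution $\Phi$, so the whole sequence converges weakly. Strong convergence in $W^{1,2}$ follows from the energy identity
\[
\int A(b_\epsilon) \nabla (\Phi_\epsilon - \Phi) \cdot \nabla (\Phi_\epsilon - \Phi) = -\int (A(b_\epsilon) - A(b)) \nabla \Phi \cdot \nabla (\Phi_\epsilon - \Phi),
\]
whose right-hand side vanishes by dominated convergence, combined again with uniform ellipticity.
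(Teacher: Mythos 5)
Your overall architecture matches the paper up to the recovery sequence: the reduction to the periodic-type functional via \Cref{theoremEquivalentEnergies}, compactness from the boundedness of the double-convolution term together with the lower bound on $\mathcal{E}$ and preservation of the boundary data since $|\Omega\setminus\Omega_{\epsilon_j}|\to 0$, the liminf inequality from \Cref{propLiminf} plus $L^2$-continuity of $\mathcal{E}$, and the statement about the potentials, which is exactly the elliptic stability result recorded in \Cref{propElectrostatic}. The genuine gap is in your limsup construction. You reduce it to the claim that every admissible $b$ can be approximated in $W^{1,2}(\mt^3,\mathbb{R}^k)$ by maps $b^{(k)}$ that are still $\mm$-valued on $\Omega$ and coincide with $b_0$ on a full neighbourhood of $\mt^3\setminus\Omega$, and you yourself flag that this needs a manifold-valued density or cut-off-and-retraction result ``when the target has sufficiently connected homotopy structure''. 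That step cannot be taken for granted in the setting of the theorem: $\mm=\psi^{-1}(0)$ is only a compact subset of $\mq$ (no smoothness, no positive reach, no topological hypotheses are assumed), so a Lipschitz nearest-point retraction onto $\mm$ need not exist, and even in the motivating cases ($\s2$- or $\mathbb{R}P^2$-valued maps on a three-dimensional domain) strong approximation of $W^{1,2}$ maps by better ones is known to fail, so gluing $b$ to $b_0$ on a shrinking collar while respecting the constraint would require Luckhaus-type machinery and extra hypotheses not present in the statement. As written, your proof of the limsup inequality is therefore incomplete at its central point.

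The paper sidesteps the constrained approximation entirely by exploiting the $\epsilon$-scaling of the bulk term: the recovery sequence is not required to take values in $\mm$; it only needs to keep $\psi$ bounded on a modification set of small measure. Concretely, using the Vitali-type \Cref{lemmaLipschitzSubdomain} one chooses a Lipschitz set $U_\epsilon\subset\Omega_\epsilon$ with $\mathcal{L}^3(U_\epsilon)\leq c\epsilon^3$ and replaces $b$ there by the componentwise harmonic interpolant of its own boundary values. The maximum principle keeps the values in $\text{Conv}(\mm)\subset\mq$, where $\psi_s$ (hence $\psi$) is bounded, so the bulk penalty is $O(\epsilon^3)/\epsilon^2=O(\epsilon)\to 0$, while the harmonic replacement converges to $b$ strongly in $W^{1,2}$, so \Cref{propLimsup} — which applies to non-constant $W^{1,2}$-convergent sequences, not only constant ones — gives convergence of the elastic term, and $\mathcal{E}$ passes to the limit by continuity. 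To repair your argument you would either have to prove the constrained collar-interpolation lemma (at the cost of additional assumptions on $\mm$), or, more in the spirit of the problem, relax the pointwise constraint along the recovery sequence and pay the vanishing bulk price as the paper does.
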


We conclude the paper by considering the case where $\mn=\mathbb{S}^2$, $V=\text{Sym}_0(3)$ is the space of traceless symmetric matrices, and $a(p)=p\otimes p-\frac{1}{3}I$. In this case the order parameter is often denoted $Q$ and referred to as the Q-tensor. By enforcing symmetry constraints on the kernel $K$, we see a very small class of bilinear forms are permissible, and obtain in these cases the classical Frank elastic constants. In this case we have that $\mm = \left\{s^*\left(n \otimes n-\frac{1}{3}I\right):n\in\s2 \right\}$, and in the case when $Q\in W^{1,2}(\Omega,\mm)$ can be written as $Q(x)=s^*\left(n(x)\otimes n(x)-\frac{1}{3}I\right)$ for $n \in W^{1,2}(\Omega,\s2)$, the elastic component of the energy reduces to 
\begin{equation}
\frac{1}{4}\int_{\Omega} K_1 (\nabla \cdot n)^2 + K_2 |n\cdot \nabla \times n|^2 +K_3 (n\cdot \nabla \times n)^2,
\end{equation}
with $K_1=K_3\neq K_2$. That is to say that the so-called {\it one-constant approximation} $K_1=K_2=K_3$ does not hold. The exact relations between $K_i$ and the kernel $K$ are given in \Cref{propFrankConstants}, with some numerically found values for a case inspired by the London dispersion forces relation given in \Cref{remarkNumericalConstants}. 

\subsection{Definition and assumptions}\label{subsecDefinition}

\begin{assumption}[Separability of the interaction kernel]
We assume there exists a finite dimensional real Hilbert space $V$, functions $a \in L^\infty(\mn,V)$ and $K:\r3\to B(V,V)$ so that the interaction kernel $\mathcal{K}$ can be separated as 
\begin{equation}
\mathcal{K}(x-y,m,m')=K(x-y)a(m)\cdot a(m').
\end{equation}
Without loss of generality we will often assume that $V=\mathbb{R}^k$ for some $k$. 
\end{assumption}

This is a simplifying assumption and will allow us to reduce our problem to that of a finite dimensional order parameter. Throughout the work we will need bounds on terms involving $K$, so we define 
\begin{equation}
g(z)=\lambda_{\min}(K(z)). 
\end{equation}

\begin{assumption}
We assume the following technical assumptions on $a,K,g$. 
\begin{enumerate}
\item \label{techAssumptionNonneg} $g$ is non-negative everywhere and bounded away from zero on some open set.
\item \label{techAssumptionIntegrable} $g \in L^1(\r3)$ and has finite second moment, so $\int_{\r3}g(z)|z|^2\,dz<+\infty$. 
\item \label{technAssumptionUpperBound} There exists a constant $M>0$ so that $\lambda_{\max}(K)\leq Mg$. 
\item \label{techAssumptionSymmetry} $K$ is a measurable, even function, so $K(z)=K(-z)$ for all $z \in \r3$. 
\item \label{techAssumptionPseudoHaar} $a$ satisfies the property that for all $(c,\xi) \in \mathbb{R}\times V\setminus\{(0,0)\}$, $\mu(\{m \in \mn : c+\xi \cdot a(m)=0\})=0$. 
\end{enumerate}
\end{assumption}

Assumptions \ref{techAssumptionNonneg},\ref{techAssumptionIntegrable} and \ref{technAssumptionUpperBound} will be required for the various coercivity and compactness estimates on the energy. In particular, \ref{technAssumptionUpperBound} means that $K$ can be estimated above and below by the same scalar function. 

Assumption \ref{techAssumptionSymmetry} with the estimates gives that $K \in L^1(\r3,B(V,V))$, and the symmetry assumption is required for several proofs. In the case of uniaxial nematics with head-to-tail symmetry, the evenness of $K$ corresponds to mirror symmetry of interactions

Finally \ref{techAssumptionPseudoHaar}, known as the {\it pseudo-Haar} condition, allows a more elegant analysis of the relationship between the microscopic and macroscopic problem. It is essentially a strong form of linear independence. In the common cases where $\mn$ is a connected analytic manifold and each component of $a$, denoted $a_i$, is an analytic function, it is equivalent to the set of functions $\{1,a_1,...,a_k\}$ being linearly independent \cite[Proposition 3]{taylor2016maximum}. We then define the following objects
\begin{definition}
Let $\mq\subset V$ be the set of admissible moments defined by 
\begin{equation}
\mq = \left\{ \int_{\mn}a(m)f(m)\,dm : f \in \mathcal{P}(\mn)\right\}.
\end{equation}
For $b \in \mq$, let $U(b)=\left\{f \in \mathcal{P}(\mn):\int_{\mn}a(m)f(m)\,dm=b\right\}$. Then we define the singular potential as 
\begin{equation}
\psi_s(b)=\min\limits_{f \in U(b)}\int_{\mn}f(m)\ln f(m)\,dm.
\end{equation}
\end{definition}

We recall from \cite{taylor2016maximum} the following results on $\mq,\psi_s$, which strongly require the pseudo-Haar condition. 
\begin{proposition}
\begin{enumerate}
\item $\mathcal{Q}$ is an open, bounded, non-empty convex set.
\item $\psi_s:\mathcal{Q}\to\mathbb{R}$ is strictly convex, $C^\infty$, and satisfies $\lim\limits_{b \to \partial\mathcal{Q}}\psi_s(b)=+\infty$. 
\item The minimisation problem defining $\psi_s$ admits a unique solution denoted $\rho_b \in \mathcal{P}(\mn)$, given by 
\begin{equation}
\rho_b(m)=\frac{1}{Z}\exp(\Lambda_b\cdot a(m))
\end{equation}
for some $C^\infty$ bijection $\Lambda_b :\mq \to V$. 
\item For every Lipschitz function $F:\mq\to\mathbb{R}$, the function $b\mapsto \psi_s(b)+F(b)$ admits a minimum. 
\end{enumerate}
\end{proposition}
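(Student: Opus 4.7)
The plan is to proceed via Legendre--Fenchel duality applied to the log-partition function. The elementary properties of $\mq$ come first: non-emptiness (take $f \equiv 1/\mu(\mn)$), convexity (by linearity of $f \mapsto \int a f\,d\mu$), and boundedness (from $\|a\|_{L^\infty}<\infty$); openness will emerge as a byproduct of the duality analysis rather than as a direct argument.

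Next I introduce the partition function $Z(\Lambda) = \int_\mn \exp(\Lambda\cdot a(m))\,d\mu(m)$, which is well-defined and $C^\infty$ on $V$ because $a$ is essentially bounded and $\mu$ is finite. Setting $H(\Lambda) = \ln Z(\Lambda)$, differentiation under the integral yields $\nabla H(\Lambda) = \int a\, \rho_\Lambda\,d\mu$ with $\rho_\Lambda = Z(\Lambda)^{-1}\exp(\Lambda\cdot a)$, while $\nabla^2 H(\Lambda)$ is the covariance of $a$ under $\rho_\Lambda$. The pseudo-Haar hypothesis (applied with $c = -\int \xi\cdot a\,\rho_\Lambda\,d\mu$) rules out almost-sure constancy of $\xi\cdot a$ for any $\xi \neq 0$, so $\nabla^2 H$ is strictly positive definite and $H$ is strictly convex.

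The crucial step is to identify $\nabla H : V \to \mq$ as a $C^\infty$ bijection. Injectivity follows from strict convexity, and local diffeomorphism from the inverse function theorem. For surjectivity, given $b \in \mq$ written as $b = \int a f\,d\mu$, I would minimise the strictly convex function $\Lambda \mapsto H(\Lambda) - \Lambda \cdot b$. The asymptotic $H(t\xi)/t \to \esssup(\xi\cdot a)$ as $t\to\infty$ (standard log-Laplace for bounded random variables), combined with the strict inequality $\xi \cdot b < \esssup(\xi\cdot a)$ for $|\xi|=1$---equality would force $f$ to be supported on the null set $\{m:\xi\cdot a(m)=\esssup(\xi\cdot a)\}$, forbidden by pseudo-Haar with $c=-\esssup(\xi\cdot a)$---yields coercivity after compactness of the unit sphere in $V$ promotes the gap to a uniform one. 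The unique minimiser defines $\Lambda_b$, depending smoothly on $b$ via the inverse function theorem. Thus $\mq = \nabla H(V)$ is open, completing (1) and furnishing the $\Lambda_b$ of (3).

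For the remaining content of (3), any $f \in U(b)$ satisfies $\int f \ln(f/\rho_b)\,d\mu \geq 0$, which rearranges to $\int f \ln f\,d\mu \geq \int f \ln \rho_b\,d\mu = \Lambda_b \cdot b - \ln Z(\Lambda_b) = \int \rho_b \ln \rho_b\,d\mu$, with equality iff $f = \rho_b$; uniqueness and the explicit Gibbs form follow. Consequently $\psi_s(b) = \Lambda_b \cdot b - H(\Lambda_b) = H^*(b)$ is the Legendre transform, inheriting $C^\infty$ strict convexity on $\mq$. For the blow-up in (2), any $b_0 \in \partial \mq$ admits a supporting hyperplane giving $\xi\cdot b_0 = \esssup(\xi\cdot a)$ for some $\xi \neq 0$, so the coercivity gap used above closes as $b \to b_0$; tracking the estimate shows $|\Lambda_b|\to\infty$ and hence $\psi_s(b)\to +\infty$ (alternatively invoke Rockafellar's essential-smoothness theorem for convex conjugates). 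Part (4) is then immediate: for Lipschitz $F$, the sum $\psi_s + F$ is lower semicontinuous and bounded below, and the boundary divergence of $\psi_s$ cannot be cancelled by a Lipschitz term, so minimising sequences remain in a compact subset of $\mq$ and the infimum is attained. The main obstacle is the uniformity in the coercivity estimate and its quantitative use to obtain the boundary blow-up---everything else is essentially bookkeeping once the duality is set up.
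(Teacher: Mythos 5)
Two remarks before the substance: the paper itself does not prove this proposition but imports it from \cite{taylor2016maximum}, so your proposal is effectively being measured against the standard maximum-entropy argument of that reference; and your overall framework is indeed that argument. The construction of $H(\Lambda)=\ln\int_{\mn}e^{\Lambda\cdot a(m)}\,d\mu(m)$, strict convexity of $H$ via pseudo-Haar, surjectivity of $\nabla H$ onto $\mq$ through coercivity of $\Lambda\mapsto H(\Lambda)-\Lambda\cdot b$ (with the uniform gap over the unit sphere), the Gibbs/relative-entropy inequality identifying $\rho_b$ and giving $\psi_s=H^*$ on $\mq$, and the inverse function theorem for smoothness and strict convexity of $\psi_s$: all of this is correct in outline and matches the standard route, so parts (1), (3) and the regularity half of (2) are fine modulo routine detail.

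The genuine gap is the boundary blow-up in (2). The inference ``tracking the estimate shows $|\Lambda_b|\to\infty$ and hence $\psi_s(b)\to+\infty$'', as well as the fallback to Rockafellar's essential-smoothness theorem, do not deliver the claim: essential smoothness of $H^*$ is equivalent to \emph{steepness}, i.e.\ blow-up of the gradient $|\nabla H^*(b)|=|\Lambda_b|$ near $\partial\mq$, which is perfectly compatible with $H^*$ itself remaining bounded. A one-dimensional counterexample to your implication is $H(\lambda)=\sqrt{1+\lambda^2}$: it is smooth, strictly convex and finite on all of $\mathbb{R}$, $\nabla H$ maps onto $(-1,1)$, and $\Lambda_b=b/\sqrt{1-b^2}\to\infty$ as $b\to 1$, yet $H^*(b)=-\sqrt{1-b^2}$ stays bounded on $[-1,1]$. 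So the value blow-up is not a consequence of the duality structure alone; it needs the pseudo-Haar condition a second time. Two ways to repair it: (i) a concentration estimate --- if $\xi$ supports $\mqc$ at $b_0$, then for $b$ near $b_0$ every $f\in U(b)$ puts mass at least $1-\eta/\delta$ on $S_\delta=\{m:\esssup(\xi\cdot a)-\xi\cdot a(m)<\delta\}$, where $\mu(S_\delta)\to 0$ as $\delta\to0$ by pseudo-Haar, and Jensen gives $\int f\ln f\,d\mu\geq \alpha\ln\bigl(\alpha/\mu(S_\delta)\bigr)-\mu(\mn)/e$, which forces $\psi_s(b)\to+\infty$; or (ii) show directly that $H^*(b_0)=+\infty$ for $b_0\in\partial\mq$, since $H(t\xi)-t\esssup(\xi\cdot a)=\ln\int_{\mn} e^{t(\xi\cdot a(m)-\esssup(\xi\cdot a))}\,d\mu(m)\to\ln\mu(\{\xi\cdot a=\esssup(\xi\cdot a)\})=-\infty$ by dominated convergence, and then use lower semicontinuity of the conjugate $H^*$ to pass from the boundary value to sequences in $\mq$ approaching $b_0$. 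With (2) repaired in either way, your argument for (4) goes through as written.
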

In light of the blow up of $\psi_s$ at the boundary of $\mathcal{Q}$, with abuse of notation we identify it with its extension to $\mqc$ given by $\psi_s(b)=+\infty$ for $b \in \partial\mq$. 

Formally, given $f \in L^1(\Omega\times\mn,[0,+\infty])$, satisfying $\int_{\mn}f(x,m)\,dm=1$ and $\int_{\mn}f(x,m)\,dm=b(x)$ for $x\in\Omega$, we have that 
\begin{equation}
\begin{split}
&\int_{\Omega\times\mn}f(x,m)\ln f(x,m)\,d(x,m)\\
&-\frac{1}{2}\int_{\Omega\times\mn}\int_{\Omega\times\mn}K(x-y)a(m)\cdot a(m')f(x,m)f(y,m')\,d(x,m)\,d(y,m')\\
=&\int_{\Omega\times\mn}f(x,m)\ln f(x,m)\,d(x,m)-\frac{1}{2}\int_{\Omega}\int_{\Omega}K(x-y)b(x)\cdot b(y)\,dx\,dy\\
\geq & \int_{\Omega}\psi_s(b(x))\,dx - \frac{1}{2}\int_{\Omega}\int_{\Omega}K(x-y)b(x)\cdot b(y)\,dx\,dy,
\end{split}
\end{equation}
with equality if and only if $f(x,m)=\rho_{b(x)}(m)$ for almost every $x,m$. This allows the problem to be reduced entirely to one phrased in $L^\infty(\Omega,\mqc)$. The exact argument requires technical care, since for an $L^1(\Omega\times\mn)$ function it is not immediately clear how to interpret $\int_{\mn}f(x,m)a(m)\,dm$ or $\int_{\mn}f(x,m)\,dm$. In \cite{liu2016oseen} this was rigorously interpreted via duality in the case when $\mn=\s2$ and $a(p)=p\otimes p-\frac{1}{3}I$, so that $\int_{\Omega\times\mn} \big(f(x,m)a(m)-b(x)\big)\varphi(x)\,dx=0$ for all $\varphi \in \mathcal{D}(\Omega)$. Their argument extends in a straightforward manner to the case we are considering, so the details will be omitted here and the energy 
\begin{equation}
\int_{\Omega}\psi_s(b(x))\,dx - \frac{1}{2}\int_{\Omega}\int_{\Omega}K(x-y)b(x)\cdot b(y)\,dx\,dy
\end{equation}
will be taken as a black box. In fact the precise form of $\psi_s$ becomes irrelevant to much of the analysis in this work, with only a lower bound and lower semicontinuity on $\mqc$ being important.

We use $\mt^3$ to denote the 3-torus, often identified with $[0,2\pi]^3$. We note however that $W^{1,2}([0,2\pi]^3,\mathbb{R})\neq W^{1,2}(\mt^3,\mathbb{R})$, although $L^p([0,2\pi]^3,\mathbb{R})=L^p(\mt^3,\mathbb{R})$. Again with abuse of notation, we identify functions in $L^1(\r3)$ which are $2\pi$-periodic in the coordinate directions with $L^1(\mt^3)$. For $h \in L^1(\mt^3)$, the $L^1$ norm is only defined as the integral over a single domain of periodicity, i.e. 
\begin{equation}
||h||_{L^1(\mt^3)}=||h||_1=\int_{\mt^3}|h(x)|\,dx.
\end{equation}

We now define notation for simplifying integrals.

\begin{definition}\label{defPeriodicVersion}
Let $h\in L^1(\r3)$. For $\epsilon >0$, define 
\begin{equation}
h_\epsilon(z)=\frac{1}{\epsilon^3}\sum\limits_{k \in \mathbb{Z}^3}h\left(\frac{z+2\pi k}{\epsilon}\right).
\end{equation}
\end{definition}
The following results are readily verified, with only sketch proofs provided. 
\begin{proposition}
Let $h \in L^1(\r3)$. Then
\begin{enumerate}
\item If $h \geq 0$ then $||h_\epsilon||_{L^1(\mt^3)}=||h||_{L^1(\r3)}$. 
\item Even if $h$ is negative somewhere, $h \in L^1(\mt^3)$.
\item If $u \in L^\infty(\mt^3)$, then 
\begin{equation}
\frac{1}{\epsilon^3}\int_{\r3}h\left(\frac{x}{\epsilon}\right)u(x)\,dx = \int_{\mt^3}h_\epsilon(x)u(x)\,dx.
\end{equation}
\end{enumerate}
\end{proposition}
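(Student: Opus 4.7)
My plan is to prove the three items of the proposition essentially by unwinding the definition of $h_\epsilon$, applying Tonelli/Fubini, and using a change of variables that tiles $\r3$ by translates of $\mt^3$.

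For item 1, assuming $h\geq 0$, I would write
\begin{equation*}
\|h_\epsilon\|_{L^1(\mt^3)}=\int_{\mt^3}\frac{1}{\epsilon^3}\sum_{k\in\mathbb{Z}^3}h\!\left(\frac{z+2\pi k}{\epsilon}\right)dz
\end{equation*}
and use Tonelli (justified by non-negativity) to interchange the sum and integral. In each summand the substitution $w=(z+2\pi k)/\epsilon$ has Jacobian $\epsilon^3$ and sends $[0,2\pi]^3$ to the cube $(2\pi/\epsilon)(k+[0,1]^3)$. The collection of these cubes, as $k$ ranges over $\mathbb{Z}^3$, tiles $\r3$ up to a null set, so the sum telescopes into $\int_{\r3}h(w)\,dw=\|h\|_{L^1(\r3)}$.

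For item 2, which I read as ``$h_\epsilon\in L^1(\mt^3)$'' (the displayed statement appears to have a typographical slip), I would simply apply item 1 to $|h|\in L^1(\r3)$ and use the pointwise triangle-inequality bound $|h_\epsilon(z)|\leq (|h|)_\epsilon(z)$, which gives $\|h_\epsilon\|_{L^1(\mt^3)}\leq \||h|\|_{L^1(\r3)}<\infty$. For item 3, item 2 shows that the double sum/integral is absolutely convergent, so Fubini applies to
\begin{equation*}
\int_{\mt^3}h_\epsilon(x)u(x)\,dx=\frac{1}{\epsilon^3}\sum_{k\in\mathbb{Z}^3}\int_{\mt^3}h\!\left(\frac{x+2\pi k}{\epsilon}\right)u(x)\,dx.
\end{equation*}
Using that $u$ is $2\pi$-periodic, so $u(x)=u(x+2\pi k)$, the substitution $y=x+2\pi k$ rewrites the $k$-th term as an integral of $\tfrac{1}{\epsilon^3}h(y/\epsilon)u(y)$ over $\mt^3+2\pi k$, and summing over $k$ recovers $\tfrac{1}{\epsilon^3}\int_{\r3}h(y/\epsilon)u(y)\,dy$ by the same tiling argument.

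There is no serious obstacle here: the whole content is bookkeeping with Tonelli/Fubini plus the tiling of $\r3$ by the lattice $2\pi\mathbb{Z}^3/\epsilon$. The only point that requires a moment of care is the justification for swapping sum and integral in items 1 and 3 — in item 1 this is immediate from non-negativity, while in item 3 it must be deferred until after item 2 establishes absolute integrability.
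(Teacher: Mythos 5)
Your proof is correct and follows essentially the same route as the paper: Tonelli plus the tiling of $\r3$ by translates of the periodic cell for item 1, the pointwise bound $|h_\epsilon|\leq(|h|)_\epsilon$ for item 2 (and you rightly note the statement means $h_\epsilon\in L^1(\mt^3)$), and the same unfolding argument, using periodicity of $u$, for item 3 — where your explicit appeal to item 2 to justify Fubini is a slightly more careful version of what the paper leaves implicit.
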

\begin{proof}
To prove result 1, first we must verify $h_\epsilon$ is $2\pi$-periodic in the coordinate directions, but this follows simply by translating $k$ in the definition of $h_\epsilon$. To show $h_\epsilon$ is integrable if $h\geq 0$, we see that 
\begin{equation}
\begin{split}\label{eqChangingIntegrals}
&\int_{\mt^3}|h_\epsilon(x)|\,dx\\
=&\frac{1}{\epsilon^3}\int_{\mt^3}\sum\limits_{k \in \mathbb{Z}^3}h\left(\frac{x+2\pi k}{\epsilon}\right)\,dx\\
=&\frac{1}{\epsilon^3}\int_{\bigcup\limits_{k \in \mathbb{Z}^3}(2\pi k +\mt^3)}h\left(\frac{x}{\epsilon}\right)\,dx\\
=& \frac{1}{\epsilon^3}\int_{\r3}h\left(\frac{x}{\epsilon}\right)\,dx\\
=& \int_{\r3}h(x)\,dx=||h||_1.
\end{split}
\end{equation}
To see result 2, we note that $|h_\epsilon|<|h|_\epsilon$, and the result follows by result 1. Result 3 follows by the same argument described in lines 2-4 of \eqref{eqChangingIntegrals} applied to $\int_{\mt^3}h_\epsilon(x)u(x)\,dx$.
\end{proof}

We also recall the definition of $\Gamma$-convergence \cite{braides2002gamma}. 

\begin{definition}
Let $X$ be a metric space and let $F_\epsilon,F:X\to[-\infty,\infty]$. We say that $F_\epsilon\overset{\Gamma}{\to}F$, with respect to the topology on $X$, if the following hold. 
\begin{enumerate}
\item (liminf inequality) For all $x_\epsilon \to x$, $\liminf\limits_{\epsilon \to 0} F_\epsilon(x_\epsilon)\geq F(x)$.
\item (limsup inequality) For every $x \in X$, there exists a sequence $x_\epsilon \to x$ so that $\lim\limits_{\epsilon \to 0} F_\epsilon(x_\epsilon)=F(x)$. 
\end{enumerate}
\end{definition}

\section{Periodic domains}\label{secPeriodicDomains}

First, rather than considering domains $\frac{1}{\epsilon}\Omega$, we take $\Omega=\r3$ and assume that $b$ is $\frac{2\pi}{\epsilon}$-periodic in the three coordinate directions. This is problematic in that the integrals will be infinite unless all energy vanishes, so instead we consider what is formally the energy per periodic cell. That is, 
\begin{equation}
\int_{\frac{1}{\epsilon}[0,2\pi]^3}\psi_s(\tilde b(\tilde x))\,d\tilde x-\frac{1}{2}\int_{\frac{1}{\epsilon}[0,2\pi]^3}\int_{\r3}K\left(x-y\right)\tilde b(\tilde x)\cdot \tilde b(\tilde y)\,d\tilde y\,d\tilde x.
\end{equation}
While such an arrangement may seem physically unrealistic, it will provide the mathematical framework to understand the more physically reasonable case to be addressed in \Cref{secBoundedDomains}. We first perform a change of variables, $x=\epsilon \tilde{x} \in [0,2\pi]^3$, $y=\epsilon\tilde{y}\in [0,2\pi]^3$, $b(x)=\tilde{b}(\epsilon x)$ so $b \in L^\infty(\mt^3,\mqc)$. Then the energy becomes 
\begin{equation}\label{eqChangeOfVariables}
\frac{1}{\epsilon^3}\int_{\mt^3}\psi_s(b(x))\,dx -\frac{1}{2\epsilon^6}\int_{\mt^3}\int_{\r3}K\left(\frac{x-y}{\epsilon}\right)b(x)\cdot b(y)\,dx\,dy. 
\end{equation}
As in \Cref{defPeriodicVersion} denoting $K_\epsilon(z)=\frac{1}{\epsilon^3}\sum\limits_{k \in \mathbb{Z}^3}K\left(\frac{z+2\pi k}{\epsilon}\right)$, we re-write the integral as 
\begin{equation}
\frac{1}{\epsilon^3}\int_{\mt^3}\psi_s(b(x))\,dx -\frac{1}{2\epsilon^3}\int_{\mt^3}\int_{\mt^3}K_\epsilon(x-y)b(x)\cdot b(y)\,dy\,dx. 
\end{equation}
Now we note that for vectors $u,v$ and a symmetric linear operator $A$, $2Au\cdot v = Au\cdot u +A v \cdot v - A(u-v)\cdot (u-v)$. Then we re-write the energy as 
\begin{equation}
\begin{split}
&\frac{1}{\epsilon^3}\int_{\mt^3}\psi_s(b(x))\,dx -\frac{1}{4\epsilon^3}\int_{\mt^3}\int_{\r3}K_\epsilon(x-y)b(x)\cdot b(x)+K_\epsilon(x-y)b(y)\cdot b(y)\,dx\,dy\\
&+\frac{1}{4\epsilon^3}\int_{\mt^3}\int_{\mt^3}K_\epsilon(x-y)\cdot(b(x)-b(y))^{\otimes 2}\,dx\,dy\\
=&\frac{1}{\epsilon^3}\int_{\mt^3}\psi_s(b(x))\,dx -\frac{1}{2\epsilon^3}\int_{\mt^3}\left(\int_{\mt^3}K_\epsilon(x-y)\,dy\right)b(x)\cdot b(x)\,dx\\
&+\frac{1}{4\epsilon^3}\int_{\mt^3}\int_{\mt^3}K_\epsilon(x-y)\left(b(x)-b(y)\right)^{\otimes 2}\,dx\,dy. 
\end{split}
\end{equation}
Now we note that $\int_{\mt^3}K_\epsilon(x-y)\,dy=\int_{\r3}K(z)\,dz=K_0$, a tensor independent of $x$. Thus if we define $c_5=\min\limits_{b \in \mathcal{Q}}\psi(b)-\frac{1}{2}K_0b\cdot b$, and $\psi(b)=\psi_s(b)-\frac{1}{2}Kb\cdot b -c_5$, then the energy, rescaled by an additive constant and dividing through by $\epsilon$, is
\begin{equation}
\mf_\epsilon(b)=\frac{1}{\epsilon^2}\int_{\mt^3}\psi(b(x))\,dx+\frac{1}{4\epsilon^2}\int_{\mt^3}\int_{\mt^3}K_\epsilon(x-y)\left(b(x)-b(y)\right)^{\otimes 2}\,dx\,dy.
\end{equation}
Furthermore $\min\limits_{b \in \mqc}\psi(b)=0$. It is this functional $\mf_\epsilon$ as written that we turn our attention to. 

\subsection{Estimates and compactness}\label{subsecEstimates}

We first provide estimates that will give us our required compactness results. The general idea is to show that if $b \in L^\infty(\mt^3,\mqc)$, then for an appropriate periodic mollifier $\varphi_\epsilon$, the $W^{1,2}$ norm of $\varphi_\epsilon * b$ can be estimated by an appropriate constant times $\mf_\epsilon(b)$. Furthermore, we will be able to estimate $||\varphi_\epsilon*b-b||^2_2=O(\epsilon^2\mf_\epsilon(b))$. This will give that if $\mf_\epsilon(b_\epsilon)$ is bounded, then $\varphi_\epsilon*b_\epsilon$ is admits a $W^{1,2}$ converging subsequence, and the sequence is asymptotically equivalent in $L^2$ to $b_\epsilon$, providing our compactness theorem. First we provide several preliminary estimates.

\begin{lemma}
Let $h \in L^1(\mt^3)$ be even and non-negative, and $b \in L^\infty(\mt^3,\mathbb{R}^k)$. Then 
\begin{equation}
\int_{\mt^3} \int_{\mt^3} (h*h)(x-y)|b(x)-b(y)|^2\,dx\,dy\leq 4||h||_1 \int_{\mt^3} \int_{\mt^3} h(x-y)|b(x)-b(y)|^2\,dx\,dy.
\end{equation}
\end{lemma}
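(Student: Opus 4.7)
My plan is to introduce an intermediate integration variable via the convolution, apply the triangle inequality to $|b(x)-b(y)|^2$ through this intermediate point, and then collapse two of the resulting integrations against $\|h\|_1$.

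First I would write the convolution on the torus as $(h*h)(u)=\int_{\mt^3}h(u-w)h(w)\,dw$, and after substituting $w\mapsto w-y$ rewrite this as
\begin{equation}
(h*h)(x-y)=\int_{\mt^3}h(x-w)h(w-y)\,dw.
\end{equation}
Substituting into the left-hand side and invoking Fubini (justified since $h\geq 0$) converts the original double integral into a triple integral over $(x,y,w)\in(\mt^3)^3$ with non-negative integrand $h(x-w)h(w-y)|b(x)-b(y)|^2$.

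Next I would apply the pointwise inequality $|b(x)-b(y)|^2\leq 2|b(x)-b(w)|^2+2|b(w)-b(y)|^2$, which follows from $(\alpha+\beta)^2\leq 2\alpha^2+2\beta^2$ with $\alpha=|b(x)-b(w)|$ and $\beta=|b(w)-b(y)|$. This splits the triple integral into two pieces. In the first piece, the factor $|b(x)-b(w)|^2$ is independent of $y$, so I integrate $y$ out using $\int_{\mt^3}h(w-y)\,dy=\|h\|_1$ (by translation-invariance of Lebesgue measure on the torus), obtaining $2\|h\|_1\int_{\mt^3}\int_{\mt^3}h(x-w)|b(x)-b(w)|^2\,dx\,dw$. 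By an analogous argument, integrating $x$ out of the second piece and relabelling gives $2\|h\|_1\int_{\mt^3}\int_{\mt^3}h(w-y)|b(w)-b(y)|^2\,dw\,dy$. The evenness of $h$ ensures both of these are equal to $\|h\|_1\int\int h(x-y)|b(x)-b(y)|^2\,dx\,dy$ (though in fact only translation-invariance is used for the integration step itself). Adding the two pieces yields the factor $4\|h\|_1$ in front of the right-hand side.

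There is no real obstacle here; the only point requiring modest care is to interpret convolution correctly on the torus rather than on $\r3$, and to track that the constant $2$ in the pointwise inequality combines with the two symmetric terms to produce the stated factor of $4$.
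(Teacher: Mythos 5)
Your proposal is correct and follows essentially the same route as the paper: expand the convolution to introduce an intermediate point, apply $|b(x)-b(y)|^2\le 2|b(x)-b(w)|^2+2|b(w)-b(y)|^2$, and integrate out the free variable in each piece against $\|h\|_1$ to get the factor $4\|h\|_1$ (the paper parametrises the intermediate point as $x-z$ rather than $w$, which is only a change of variables). No gaps.
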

\begin{proof}
First, we note that given any vector space, the triangle inequality gives us $|u-v|\leq |u-w|+|w-v|$. Squaring both sides gives $|u-v|^2\leq |u-w|^2+|w-v|^2+2|u-w||v-w|$, and applying Young's inequality we have $|u-v|^2\leq 2|u-w|^2+2|w-v|^2$. Thus
\begin{equation}
\begin{split}
&\int_{\mt^3}\int_{\mt^3}(h*h)(x-y)|b(x)-b(y)|^2\,dx\,dy\\
=& \int_{\mt^3}\int_{\mt^3}\int_{\mt^3} h(x-y-z)h(z)|b(x)-b(y)|^2\,dz\,dx\,dy\\
\leq & 2\int_{\mt^3}\int_{\mt^3}\int_{\mt^3} h(x-y-z)h(z) \left(|b(x)-b(x-z)|^2+|b(x-z)-b(y)|^2\right)\,dz\,dx\,dy\\
=& 2||h||_1\int_{\mt^3}\int_{\mt^3} h(z)|b(x)-b(x-z)|^2\,dz\,dx\\
&+ 2\int_{\mt^3}\int_{\mt^3}\int_{\mt^3} h(\xi-y)h(z)|b(\xi)-b(y)|^2\,dz\,d\xi\,dy\\
=&2||h||_1\int_{\mt^3}\int_{\mt^3} h(x-z)|b(x)-b(x-z)|^2\,dz\,dx+2||h||_1\int_{\mt^3}\int_{\mt^3} h(\xi-y)|b(\xi)-b(y)|^2\,dy\,d\xi\\
=& 4||h||_1 \int_{\mt^3} \int_{\mt^3} h(x-y)|b(x)-b(y)|^2\,dx\,dy.
\end{split}
\end{equation}
Note a change of variables, replacing $x$ by taking $\xi=x-z$ was used.
\end{proof}

In order provide estimates of $\varphi_\epsilon * b$, the mollifier need be somehow comparable to the convolution kernel $K$. The exact relation, and existence of such a mollifier is given in the next result. 

\begin{lemma}\label{lemmaMollifier}
Let $g \in L^1({\mt^3})$ be even, non-negative and non-zero. Then there exists some even periodic mollifier $\varphi_\epsilon \in C^\infty({\mt^3})$ and $c>0$ so that $\varphi_\epsilon \leq \frac{1}{c}g_\epsilon$ and $|\nabla \varphi_\epsilon|\leq \frac{1}{c\epsilon}g_\epsilon$. 
\end{lemma}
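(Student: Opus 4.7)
The plan is to construct $\varphi_\epsilon$ explicitly as a periodised rescaling of a single smooth bump, chosen so that each periodic summand of $\varphi_\epsilon$ is pointwise dominated by the corresponding summand of $g_\epsilon$. The key observation is that once we locate constants $\alpha,r>0$ and a point $x_0\in\mathbb{R}^3$ with $g\geq\alpha$ pointwise on $B_r(x_0)\cup B_r(-x_0)$, the pointwise comparisons for the prototype bump propagate through both the scaling by $\epsilon^{-1}$ and the periodic sum term by term. The evenness of $g$ makes the pair of balls automatic once a single ball has been found.

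To carry this out, I would fix once and for all a smooth, even, non-negative bump $\eta\in C_c^\infty(\mathbb{R}^3)$ whose support lies inside $B_r(x_0)\cup B_r(-x_0)$, normalised so that $\int\eta=1$ and with uniform bounds $\|\eta\|_\infty\leq M$ and $\|\nabla\eta\|_\infty\leq M'$. The essential pointwise inequalities $\eta\leq\frac{M}{\alpha}g$ and $|\nabla\eta|\leq\frac{M'}{\alpha}g$ on $\mathbb{R}^3$ then follow immediately from the choice of support. I would then define the candidate mollifier by periodising and scaling, mirroring \Cref{defPeriodicVersion}:
\begin{equation*}
\varphi_\epsilon(z):=\sum_{k\in\mathbb{Z}^3}\frac{1}{\epsilon^3}\,\eta\!\left(\frac{z+2\pi k}{\epsilon}\right).
\end{equation*}
For all sufficiently small $\epsilon$ the supports of distinct summands are pairwise disjoint, so the sum is locally finite, $C^\infty$, even and non-negative.

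The desired bounds then fall out summand by summand: applying the pointwise inequalities for $\eta$ and $\nabla\eta$ and picking up a factor $\epsilon^{-1}$ from the chain rule in the gradient case yields
\begin{equation*}
\varphi_\epsilon(z)\leq\frac{M}{\alpha}\,g_\epsilon(z)\qquad\text{and}\qquad|\nabla\varphi_\epsilon(z)|\leq\frac{M'}{\alpha\epsilon}\,g_\epsilon(z),
\end{equation*}
so $c:=\min(\alpha/M,\alpha/M')$ satisfies the claim. The change-of-variables identity from the preceding proposition also gives $\int_{\mt^3}\varphi_\epsilon\,dz=1$, confirming that $\varphi_\epsilon$ behaves as a genuine periodic approximate identity as $\epsilon\to 0$.

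The main obstacle is the very first step, namely producing an honest open ball on which $g$ is pointwise bounded away from zero. Under the paper's standing Assumption~\ref{techAssumptionNonneg} (which is precisely this condition) the ball is immediate. If one wishes instead to work purely from the hypotheses stated in the lemma, one would mollify $g$ slightly and invoke a Steinhaus-type argument on the level set $E:=\{g\geq\alpha\}$: the autocorrelation $\chi_E*\chi_E$ is continuous, equal to $|E|>0$ at the origin, and therefore strictly positive on an open ball around $0$, and the rescaling above then proceeds with appropriately adjusted constants. Once such an open ball is located, everything else is a routine summand-by-summand comparison.
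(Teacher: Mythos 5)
Your construction -- a smooth even bump supported where $g$ is bounded away from zero, dominated pointwise together with its gradient by a constant multiple of $g$, then periodised with the $\epsilon^{-3}$ scaling so that both bounds pass summand by summand (the gradient picking up the extra $\epsilon^{-1}$ from the chain rule) -- is correct and is essentially the paper's own proof, which likewise invokes the standing assumption that $g$ is bounded away from zero on some open set. One caveat: your Steinhaus-type aside would not actually rescue the lemma from its literal hypotheses (if $g$ is the indicator of a symmetric fat Cantor set, $g_\epsilon$ vanishes on a dense open subset of the torus, so no nonzero continuous $\varphi_\epsilon$ can be dominated by $\frac{1}{c}g_\epsilon$), but since your main argument uses the standing assumption exactly as the paper does, this side remark does not affect the proof.
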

\begin{proof}
Let $\tilde{\varphi} \in C^\infty(\mathbb{R})$ be non-negative and satisfy $\tilde{\varphi}\leq g$ and $|\nabla \tilde{\varphi}|\leq g$. This can be done since we assume that there exists an open set on which $g$ is strictly bounded away from zero. Define $\varphi(x)=\frac{\tilde{\varphi}(x)+\tilde{\varphi}(-x)}{2||\tilde{\varphi}||_1}$. Then a straightforward algebraic exercise gives that $\varphi_\epsilon=\sum\limits_{k \in\mathbb{Z}^3}\frac{1}{\epsilon^3} \varphi\left(\frac{x+2\pi k}{\epsilon}\right)$ satisfies the required properties. 
\end{proof}

\begin{proposition}\label{propW12Bound}
Let $\varphi_\epsilon$ be as in \Cref{lemmaMollifier}. Then for all $b \in L^\infty({\mt^3},\mqc)$,
\begin{equation}
\int_{\mt^3} |\nabla \left(\varphi_\epsilon * b\right)|^2\leq \frac{2||g||_1}{c^2\epsilon^2}\left(\int_{\mt^3} \int_{\mt^3}K_\epsilon(x-y)\cdot (b(x)-b(y))^{\otimes 2}\,dy\,dx\right).
\end{equation}
\end{proposition}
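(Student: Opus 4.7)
The plan is to exploit cancellation in $\nabla(\varphi_\epsilon * b)$ arising from the fact that $\varphi_\epsilon$ (hence $\nabla \varphi_\epsilon$) is periodic, so that $\int_{\mt^3} \nabla\varphi_\epsilon(z)\,dz = 0$. Consequently, for every $x$,
\begin{equation*}
\nabla(\varphi_\epsilon * b)(x) = \int_{\mt^3}\nabla\varphi_\epsilon(x-y)\,\bigl(b(y)-b(x)\bigr)\,dy,
\end{equation*}
which turns a quantity involving $b$ itself into a finite-difference-like expression. This is the only reason any gradient bound in terms of $b$-differences can be possible, and it is the heart of the argument.

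Next I would apply the Cauchy--Schwarz inequality in the measure $|\nabla\varphi_\epsilon(x-y)|\,dy$: writing the integrand as $|\nabla\varphi_\epsilon(x-y)|^{1/2}\cdot |\nabla\varphi_\epsilon(x-y)|^{1/2}(b(y)-b(x))$ yields, pointwise in $x$,
\begin{equation*}
|\nabla(\varphi_\epsilon * b)(x)|^2 \;\le\; \|\nabla\varphi_\epsilon\|_{L^1(\mt^3)} \int_{\mt^3}|\nabla\varphi_\epsilon(x-y)|\,|b(y)-b(x)|^2\,dy.
\end{equation*}
Now I invoke \Cref{lemmaMollifier}, which supplies the two crucial inequalities $|\nabla\varphi_\epsilon|\le \frac{1}{c\epsilon}g_\epsilon$ and (via non-negativity of $g$) $\|\nabla\varphi_\epsilon\|_1 \le \frac{1}{c\epsilon}\|g_\epsilon\|_1 = \frac{1}{c\epsilon}\|g\|_1$, the last equality coming from Proposition~1.11 applied to the non-negative function $g$. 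Substituting both bounds produces
\begin{equation*}
|\nabla(\varphi_\epsilon * b)(x)|^2 \;\le\; \frac{\|g\|_1}{c^2\epsilon^2}\int_{\mt^3}g_\epsilon(x-y)\,|b(y)-b(x)|^2\,dy.
\end{equation*}

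Finally, I integrate in $x$, use Fubini, and pass from the scalar kernel $g_\epsilon$ to the tensor kernel $K_\epsilon$ by observing that since $g(z)=\lambda_{\min}(K(z))$, we have $K(z)v\cdot v \ge g(z)|v|^2$ pointwise; summing over the periodic translates preserves the inequality, giving $K_\epsilon(z)\cdot v^{\otimes 2} \ge g_\epsilon(z)|v|^2$. Applying this with $v = b(x)-b(y)$ yields the desired bound (a harmless factor of $2$ can be absorbed or extracted depending on how tightly one tracks the Cauchy--Schwarz step). The only mildly subtle point is keeping careful bookkeeping of periodization: one must verify $\int_{\mt^3}\nabla\varphi_\epsilon(z)\,dz=0$ and $\|g_\epsilon\|_{L^1(\mt^3)}=\|g\|_{L^1(\r3)}$ correctly, both of which are immediate from the structure of the periodized object in \Cref{defPeriodicVersion}. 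No single step is a serious obstacle; the main conceptual ingredient is the zero-mean cancellation in the first display, without which the $\|\nabla\varphi_\epsilon\|_1$ factor would blow up like $\epsilon^{-1}$ against $\|b\|_\infty$ rather than against the small quantity $|b(x)-b(y)|$.
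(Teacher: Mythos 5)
Your proof is correct, and it takes a genuinely different (and in fact leaner) route than the paper. The paper expands $\int_{\mt^3}|\nabla(\varphi_\epsilon*b)|^2$ as a double integral against the inner-product convolution $\nabla\varphi_\epsilon\hat{*}\nabla\varphi_\epsilon$, converts $-b(y)\cdot b(z)$ via the polarization identity $2Au\cdot v=Au\cdot u+Av\cdot v-A(u-v)\cdot(u-v)$, kills the $|b|^2$ terms using $\int_{\mt^3}(\nabla\varphi_\epsilon\hat{*}\nabla\varphi_\epsilon)=0$, and then must invoke the preliminary lemma of \Cref{subsecEstimates} bounding $\iint (h*h)(x-y)|b(x)-b(y)|^2$ by $4\|h\|_1\iint h(x-y)|b(x)-b(y)|^2$ with $h=g_\epsilon$; that lemma is the source of the factor $2\|g\|_1$. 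You instead exploit the zero mean of $\nabla\varphi_\epsilon$ one step earlier, writing $\nabla(\varphi_\epsilon*b)(x)=\int_{\mt^3}\nabla\varphi_\epsilon(x-y)\otimes\bigl(b(y)-b(x)\bigr)\,dy$, and then apply Cauchy--Schwarz pointwise in $x$ with the measure $|\nabla\varphi_\epsilon(x-y)|\,dy$, followed by the two bounds from \Cref{lemmaMollifier}, the periodization identity $\|g_\epsilon\|_{L^1(\mt^3)}=\|g\|_{L^1(\r3)}$, and $K_\epsilon(z)\cdot v^{\otimes 2}\ge g_\epsilon(z)|v|^2$ (valid since $g=\lambda_{\min}(K)$ and periodization is a positive operation). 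This bypasses the convolution-square lemma entirely and yields the sharper constant $\|g\|_1/(c^2\epsilon^2)$; since the right-hand side is non-negative, your estimate implies the stated one with $2\|g\|_1/(c^2\epsilon^2)$, so the discrepancy in constants is harmless, exactly as you say. All the individual steps check out: $\varphi_\epsilon*b$ is smooth with $\nabla(\varphi_\epsilon*b)=(\nabla\varphi_\epsilon)*b$ for $b\in L^\infty$, the subtraction of $b(x)$ is licensed by $\int_{\mt^3}\nabla\varphi_\epsilon=0$ for the smooth periodic $\varphi_\epsilon$, and $|\nabla\varphi_\epsilon(x-y)\otimes(b(y)-b(x))|=|\nabla\varphi_\epsilon(x-y)|\,|b(y)-b(x)|$ justifies the pointwise bound before Cauchy--Schwarz. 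The only cosmetic slip is the citation ``Proposition~1.11'' for the periodization identity, which is the proposition immediately following \Cref{defPeriodicVersion}; the content you use from it is correct. Your approach buys a shorter proof with a better constant and would make the paper's first lemma of \Cref{subsecEstimates} unnecessary; the paper's approach keeps everything at the level of quadratic forms and convolutions, at the cost of the extra lemma and a factor of $2$.
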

\begin{proof}
This is predominantly an algebraic exercise. First note that since $\varphi_\epsilon$ is even, its derivative is odd. Define the inner-product convolution $\hat{*}$ for $u,v:{\mt^3} \to \mathbb{R}^n$, $u\hat{*}v :{\mt^3} \to \mathbb{R}$ by
\begin{equation}
u\hat{*}v (x)=\int_{\mt^3} u(x-y)\cdot v(y)\,dy. 
\end{equation} Then we proceed as  
\begin{equation}
\begin{split}
\int_{\mt^3} |\nabla (\varphi_\epsilon*b_\epsilon)|^2=&\int_{\mt^3}\int_{\mt^3}\int_{\mt^3}\bigg( \nabla \varphi_\epsilon (x-y)\cdot \nabla \varphi_\epsilon(x-z)\bigg)\bigg(b_\epsilon(y)\cdot b_\epsilon(z)\bigg)\,dy\,dz\,dx\\
=& \int_{\mt^3} \int_{\mt^3} b(y)\cdot b(z)\left(\int_{\mt^3} -\nabla \varphi_\epsilon(y-x)\cdot \nabla \varphi_\epsilon(x-z)\,dx\right)\,dy\,dx\\
=&  \int_{\mt^3} \int_{\mt^3} -b(z)\cdot b(y)\left(\nabla \varphi_\epsilon\hat{*}\nabla\varphi_\epsilon\right)(y-z)\,dy\,dz\\
=& \frac{1}{2}\int_{\mt^3} \int_{\mt^3} \left(|b(z)-b(y)|^2-|b(x)|^2-|b(y)|^2\right)\left(\nabla \varphi_\epsilon\hat{*}\nabla\varphi_\epsilon\right)(y-z)\\
\leq &\frac{1}{2}\int_{\mt^3} \int_{\mt^3} |b(z)-b(y)|^2|\nabla \varphi_\epsilon|*|\nabla \varphi_\epsilon|(y-z)\,dy\,dx\\
&-\int_{\mt^3} |b(z)|^2\,dz \int_{\mt^3} \left(\nabla \varphi_\epsilon\hat{*}\nabla\varphi_\epsilon\right)(y)\,dy\\
\leq & \frac{1}{2c^2\epsilon^2}\int_{\mt^3} \int_{\mt^3} |b(z)-b(y)|^2(g_\epsilon * g_\epsilon)(y-z)\,dy\,dz\\
\leq & \frac{2||g||_1}{c^2\epsilon^2}\int_{\mt^3} \int_{\mt^3} |b(z)-b(y)|^2 g_\epsilon(y-z)\,dy\,dz.
\end{split}
\end{equation}
Note that we used $\int_{\mt^3} \left(\nabla \varphi_\epsilon\hat{*}\nabla\varphi_\epsilon\right)(y)\,dy=\int_{\mt^3} \nabla\varphi_\epsilon(x)\,dx\cdot \int_{\mt^3} \nabla\varphi_\epsilon(y)\,dy=0$, since $\varphi_\epsilon$ is periodic. In this case, 
\begin{equation}
\begin{split}
\int_{\mt^3} |\nabla (\varphi_\epsilon*b_\epsilon)|^2\leq &\frac{2||g||_1}{c^2\epsilon^2}\int_{\mt^3} \int_{\mt^3} |b_\epsilon(z)-b_\epsilon(y)|^2g_\epsilon(y-z)\,dy\,dx\\
\leq & \frac{2||g||_1}{c^2\epsilon^2}\left(\int_{\mt^3} \int_{\mt^3}K_\epsilon(x-y)\cdot (b_\epsilon(x)-b_\epsilon(y))^{\otimes 2}\,dy\,dx\right).
\end{split}
\end{equation}
\end{proof}

\begin{corollary}\label{corollaryW12Limit}
Let $b_\epsilon \in L^\infty({\mt^3},\mathbb{R}^k)$ for $\epsilon>0$. Assume that 
\begin{equation}
\int_{\mt^3} \int_{\mt^3}K_\epsilon(x-y)\cdot (b_\epsilon(x)-b_\epsilon(y))^{\otimes 2}\,dy\,dx
\end{equation} is uniformly bounded. Then up to a subsequence, $b_{\epsilon_j} \to b$ in $L^2$ with $b \in W^{1,2}({\mt^3},\mathbb{R}^k)$. 
\end{corollary}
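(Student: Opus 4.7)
My plan is to use the mollifier $\varphi_\epsilon$ constructed in Lemma~\ref{lemmaMollifier} to produce a smooth approximant $\varphi_\epsilon * b_\epsilon$ that is compact in $W^{1,2}$, then to show $\varphi_\epsilon * b_\epsilon$ and $b_\epsilon$ are asymptotically indistinguishable in $L^2$ so that the same limit works for both.

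First, Proposition~\ref{propW12Bound} applied to $b_\epsilon$ gives a uniform bound on $\|\nabla(\varphi_\epsilon * b_\epsilon)\|_{L^2(\mt^3)}^2$ from the bounded-energy hypothesis (with the $\epsilon^{-2}$ scaling of the kernel term implicit in $\mf_\epsilon$). Since $b_\epsilon$ is uniformly bounded in $L^\infty$ (as it takes values in the bounded set $\mqc$) and $\|\varphi_\epsilon\|_{L^1(\mt^3)}=1$, Young's inequality shows $\varphi_\epsilon * b_\epsilon$ inherits a uniform $L^\infty$-bound, so $\varphi_\epsilon * b_\epsilon$ is bounded in $W^{1,2}(\mt^3,\mathbb{R}^k)$. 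By Rellich--Kondrachov a subsequence $\varphi_{\epsilon_j} * b_{\epsilon_j}$ then converges strongly in $L^2$ and weakly in $W^{1,2}$ to some $b \in W^{1,2}(\mt^3,\mathbb{R}^k)$.

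Second, I show $\|\varphi_\epsilon * b_\epsilon - b_\epsilon\|_{L^2}^2 \to 0$. Writing the difference as $(\varphi_\epsilon * b_\epsilon)(x) - b_\epsilon(x) = \int \varphi_\epsilon(y)(b_\epsilon(x-y) - b_\epsilon(x))\,dy$ and applying Jensen's inequality (valid since $\varphi_\epsilon\geq 0$ has unit mass on $\mt^3$) yields $\|\varphi_\epsilon * b_\epsilon - b_\epsilon\|_{L^2}^2 \leq \int\int \varphi_\epsilon(x-y)|b_\epsilon(x) - b_\epsilon(y)|^2\,dx\,dy$ after a change of variables. The pointwise bound $\varphi_\epsilon \leq \tfrac{1}{c} g_\epsilon$ from Lemma~\ref{lemmaMollifier}, combined with the tensorial inequality $g_\epsilon(z)|v|^2 \leq K_\epsilon(z)\cdot v^{\otimes 2}$ (immediate from $g=\lambda_{\min}(K)$ and the definition of $K_\epsilon$ as a periodisation of a non-negative sum), dominates this by $\tfrac{1}{c}\int\int K_\epsilon(x-y)\cdot(b_\epsilon(x)-b_\epsilon(y))^{\otimes 2}\,dx\,dy$, which on the appropriate scaling tends to zero. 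The triangle inequality then delivers $b_{\epsilon_j}\to b$ in $L^2$. Structurally the argument is a routine compactness-plus-Jensen computation; the heavy lifting has already been carried out in Proposition~\ref{propW12Bound} and Lemma~\ref{lemmaMollifier}, so no step here is a genuine obstacle.
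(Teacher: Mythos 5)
Your proposal is correct and follows essentially the same route as the paper: the $W^{1,2}$ bound on $\varphi_\epsilon * b_\epsilon$ from Proposition \ref{propW12Bound}, compactness of the mollified sequence, and the Jensen-plus-$\varphi_\epsilon\leq\frac{1}{c}g_\epsilon\leq\frac{1}{c}K_\epsilon$-domination estimate showing $\|\varphi_\epsilon*b_\epsilon-b_\epsilon\|_{2}\to 0$, with the same reading of the hypothesis as carrying the $\epsilon^{-2}$ scaling (and the same implicit uniform $L^\infty$/$\mqc$ bound the paper uses). The only difference is cosmetic: you conclude directly by the triangle inequality where the paper also identifies the weak-* limit of $b_{\epsilon_j}$ with the weak $W^{1,2}$ limit of $\varphi_{\epsilon_j}*b_{\epsilon_j}$ by duality, a step your argument renders unnecessary.
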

\begin{proof}
We can take a subsequence so that $b_{\epsilon_j}\ws b \in L^\infty({\mt^3},\mathbb{R}^k)$. Let $\varphi_\epsilon$ be as in \Cref{lemmaMollifier}. Then by the previous result we have that $\varphi_{\epsilon_j}* b_{\epsilon_j}$ is bounded in $W^{1,2}$, and therefore admits a weakly converging subsequence (not relabelled).  Furthermore, for any $h \in L^1$, $\langle \varphi_{\epsilon_j}*b_{\epsilon_j}-b_{\epsilon_j},h\rangle = \langle b_{\epsilon_j},\varphi_{\epsilon_j}* h -h\rangle$, which tends to zero since $\varphi_{\epsilon_j}*h \to h$ in $L^1$. Therefore the weak-* limit of $b_{\epsilon_j}$ and the weak limit of $\varphi_{\epsilon_j}* b_{\epsilon_j}$ coincide, with the former known to be in $W^{1,2}$. Now since $\varphi_\epsilon * b_{\epsilon_j} \wl b$ in $W^{1,2}$, we can take a subsequence (not relabelled) so that $\varphi_\epsilon*b_{\epsilon_j}\to b$ in $L^2$. Then finally we see that 
\begin{equation}
\begin{split}
&\int_{\mt^3} |\varphi_{\epsilon_j}*b_{\epsilon_j}(x)-b_{\epsilon_j}(x)|^2\,dx\\
=&\int_{\mt^3} \left|\int_{\mt^3}\varphi_{\epsilon_j}(x-y)(b_{\epsilon_j}(y)-b_{\epsilon_j}(x))\,dy\right|^2\,dx\\
=& \int_{\mt^3} \left|\int_{\mt^3}\varphi_{\epsilon_j}(x-y)^\frac{1}{2}\left(\varphi_{\epsilon_j}(x-y)^\frac{1}{2}(b_{\epsilon_j}(y)-b_{\epsilon_j}(x))\right)\,dy\right|^2\,dx\\
\leq & \int_{\mt^3} \int_{\mt^3} \varphi_{\epsilon_j}(x-y)|b_{\epsilon_j}(y)-b_{\epsilon_j}(x)|^2\,dx\,dy\\
\leq & \frac{1}{c}\int_{\mt^3} \int_{\mt^3} g_{\epsilon_j}(x-y)|b_{\epsilon_j}(x)-b_{\epsilon_j}(y)|^2\,dx\,dy\\
\leq & \frac{{\epsilon_j}^2}{c}\mf_{\epsilon_j}(b_{\epsilon_j})
\end{split}
\end{equation}
which, since $\mf_{\epsilon_j}(b_{\epsilon_j})$ is uniformly bounded, must then be of order ${\epsilon_j}^2$. Therefore $b_{\epsilon_j}$ admits the same $L^2$ limit as $\varphi_{\epsilon_j}*b_{\epsilon_j}$, which is known to be $b \in W^{1,2}({\mt^3},\mathbb{R}^k)$. 
\end{proof}

\begin{proposition}\label{propCompactMinimisingManifold}
If $\mf_{\epsilon_j}(b_{\epsilon_j})$ is uniformly bounded, then any cluster point $b$ as given in \Cref{corollaryW12Limit} must satisfy $b(x)\in \mm = \psi^{-1}(0)$ pointwise almost everywhere. 
\end{proposition}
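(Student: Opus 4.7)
The plan is to exploit the fact that both terms in $\mf_\epsilon(b)$ are non-negative, so each is individually controlled by the total energy, and in particular the bulk term must vanish in the limit.

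First, I would verify non-negativity of both terms. By construction $\psi(b) = \psi_s(b) - \frac{1}{2}K_0 b\cdot b - c_5 \geq 0$ with equality precisely on $\mm = \psi^{-1}(0)$. For the interaction term, the series $K_\epsilon(z) = \frac{1}{\epsilon^3}\sum_{k\in\mathbb{Z}^3}K\left(\frac{z+2\pi k}{\epsilon}\right)$ is a non-negative definite operator at (almost) every $z$, since each summand is non-negative definite by assumption \ref{techAssumptionNonneg} on $g = \lambda_{\min}(K)$. Hence $K_\epsilon(x-y)\cdot(b(x)-b(y))^{\otimes 2}\geq 0$ pointwise, and the double integral is non-negative. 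Consequently, from $\mf_{\epsilon_j}(b_{\epsilon_j}) \leq C$ we extract the one-sided bound
\begin{equation}
\int_{\mt^3}\psi(b_{\epsilon_j}(x))\,dx \leq \epsilon_j^2\,\mf_{\epsilon_j}(b_{\epsilon_j}) \leq C\epsilon_j^2 \to 0.
\end{equation}

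Next, from \Cref{corollaryW12Limit} we have $b_{\epsilon_j}\to b$ in $L^2(\mt^3,\mathbb{R}^k)$ along the chosen subsequence, so passing to a further subsequence (not relabelled) we may assume $b_{\epsilon_j}(x)\to b(x)$ for almost every $x\in\mt^3$. Since $\psi$ is continuous on the open set $\mq$ and is defined to equal $+\infty$ on $\partial\mq$, it is lower semicontinuous on all of $\mqc$, so for almost every $x$,
\begin{equation}
\psi(b(x)) \leq \liminf_{j\to\infty}\psi(b_{\epsilon_j}(x)).
\end{equation}

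Finally, applying Fatou's lemma (legitimate because $\psi\geq 0$) together with the bound above yields
\begin{equation}
\int_{\mt^3}\psi(b(x))\,dx \leq \liminf_{j\to\infty}\int_{\mt^3}\psi(b_{\epsilon_j}(x))\,dx = 0.
\end{equation}
Since $\psi\geq 0$, this forces $\psi(b(x))=0$ for almost every $x\in\mt^3$, i.e.\ $b(x)\in\mm$ almost everywhere, as required. The only mildly delicate point is justifying the lower semicontinuity of $\psi$ at boundary points of $\mq$; this is handled by the convention $\psi=+\infty$ on $\partial\mq$ combined with smoothness of $\psi_s$ in the interior, which is enough for Fatou's lemma to apply without worrying about whether cluster points might lie on $\partial\mq$.
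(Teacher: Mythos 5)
Your proposal is correct and follows essentially the same route as the paper: both extract the bound $\int_{\mt^3}\psi(b_{\epsilon_j})\,dx=O(\epsilon_j^2)$ from the energy (using non-negativity of the interaction term) and then pass to the limit via lower semicontinuity of the bulk integral under $L^2$ convergence. Your explicit justification via pointwise a.e.\ convergence, lower semicontinuity of $\psi$ on $\mqc$, and Fatou's lemma is simply the detail the paper leaves implicit.
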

\begin{proof}
Since $b_{\epsilon_j}\to b$ in $L^2$, we must have that 
\begin{equation}
\liminf\limits_{j \to \infty}\int_{\mt^3} \psi(b_{\epsilon_j}(x))\,dx \geq \int_{\mt^3} \psi(b(x))\,dx.
\end{equation}
The left-hand side is of order $\epsilon_j^2$, so therefore we have that $\int_{\mt^3} \psi(b(x))\,dx=0$, so that $\psi(b(x))=0$ almost everywhere. 
\end{proof}

Combining the previous results is our main compactness theorem. 

\begin{theorem}[Compactness]\label{theoremPeriodicCompactness}
Let $b_{\epsilon} \in L^\infty({\mt^3},\mqc)$ be such that $\mf_\epsilon(b_\epsilon)$ is uniformly bounded. Then there exists some $b \in W^{1,2}({\mt^3},\mm)$ and a subsequence $\epsilon_j$ so that $b_{\epsilon_j}\overset{L^2}{\to} b$.
\end{theorem}

\subsection{The $\Gamma$-limit}

Now that we have our compactness result, we turn to evaluating the $\Gamma$-limit itself. The key idea is to rewrite the bilinear form as a bilinear form involving finite difference quotients, which will in the limit become gradients. To this end, we must first recall some properties of translation and finite difference operators on function spaces. 

\begin{definition}
Let $z \in \mathbb{R}^3$, and $b \in L^\infty(\mt^3,\mqc)$. Then define the translation and finite difference operators, $T_z:L^\infty(\mt^3,\mqc)\to L^\infty(\mt^3,\mqc)$ and $D_z:L^\infty(\mt^3,\mqc)\to L^\infty(\mt^3,\mathbb{R}^k)$ respectively by 
\begin{equation}
\begin{split}
T_zb(x)=&b(x+z)\\
D_zb(x)=&\frac{1}{|z|}\left(b(x+z)-b\right)=\frac{1}{|z|}(T_z-I)b(x).
\end{split}
\end{equation}
\end{definition}

We now recall some elementary results of these operators, which in various forms can be found in standard references (e.g. \cite{gilbarg2015elliptic}), although proofs are included for completeness.
\begin{proposition}
Fix $u \in L^2(\mt^3,\mathbb{R})$. Then 
\begin{enumerate}
\item $T_zu \to u$ in $\mathcal{D}'(\mt^3)$.
\item The map from $\mt^3$ to $L^2(\mt^3,\mathbb{R})$ given by $z \to T_zu$ is continuous. 
\item If $u \in W^{1,2}(\mt^3,\mathbb{R})$, then $D_{tz}u \to \h{z}\cdot \nabla u$ in $\mathcal{D}'(\mt^3)$ as $t \to 0^+$. 
\item If $u \in W^{1,2}(\mt^3,\mathbb{R})$, then $||D_{tz}u||_2\leq ||\h{z}\cdot \nabla u||_2$ and  $||D_zu-\h{z}\cdot \nabla u ||_2\leq \sup\limits_{|\xi|\leq |z|}||(T_\xi-I)\nabla u||_2$, which tends to zero as $|z|\to 0$.
\end{enumerate}
\begin{proof}
\begin{enumerate}
\item Let $\phi \in \mathcal{D}(\mt^3)$. Then 
\begin{equation}
\begin{split}
\left|\langle T_zu,\phi\rangle-\langle u,\phi\rangle \right|= &\left| \int_{\mt^3}u(x+z)\phi(x)-u(x)\phi(x)\,dx\right|\\
=& \left|\int_{\mt^3}u(x)\big(\phi(x-z)-\phi(x)\big)\,dx\right|\\
\leq & \int_{\mt^3}|u(x)|\,|\phi(x-z)-\phi(x)|\,dx\\
\leq &||u||_2\left(\int_{\mt^3}\text{Lip }(\phi)^2|z|^2\,dx\right)^\frac{1}{2}\\
=&O(|z|). 
\end{split}
\end{equation}
\item It is immediate that $||T_zu||_2=||u||_2$ by translational invariance on $\mt^3$. Then since $T_zu \to u$ in $\mathcal{D}'(\mt^3)$ and $||T_zu||_2$ is bounded, this implies $T_zu \overset{L^2}{\wl} u$ in $L^2$. But also, $\lim\limits_{z \to 0}||T_zu||_2=\lim\limits_{z \to 0} ||u||_2 = ||u||_2$, so that $T_zu \overset{L^2}{\wl}u$ and $||T_zu||_2\to ||u||_2$, so $T_zu\overset{L^2}{\to} u$. 
\item Again let $\phi \in \mathcal{D}(\mt^3)$. We first note that $D_{ty}\phi(x) \to \h{y}\nabla \cdot \phi(x)$ uniformly in $x$ as $t \to 0^+$. 
\begin{equation}
\begin{split}
\langle D_zu,\phi\rangle=& \frac{1}{|z|}\int_{\mt^3}(u(x+z)-u(x))\phi(x)\,dx\\
=& \frac{1}{|z|}\int_{\mt^3}u(x)\left(\phi(x-z)-\phi(x)\right)\,dx\\
\to & \int_{\mt^3}u(x)\big(-\h{z}\cdot \nabla \phi(x)\big)\,dx\\
=& \int_{\mt^3}\h{z}\cdot\nabla  u(x) \phi(x)\,dx. 
\end{split}
\end{equation}
\item To show that $||D_zu||_2\leq ||\h{z}\cdot \nabla u||_2$, we see it suffices to only consider when $b \in C^1(\mt^3)$ and then the result follows by density. In this case we see that 
\begin{equation}
\begin{split}
|D_zu(x)|=& \frac{1}{|z|}\left|\int_{0}^{|z|}\nabla u(x+t\h{z})\cdot \h{z}\,dt\right|\\
\leq & \frac{1}{|z|}\left(\int_0^{|z|}\,dt\right)^\frac{1}{2}\left(\int_{0}^{|z|}|\nabla u(x+t\h{z})\cdot \h{z}|^2\,dt\right)^\frac{1}{2}
\\
\Rightarrow \int_{\mt^3}|D_zu(x)|^2\,dx \leq & \frac{1}{|z|}\int_{\mt^3}\int_{0}^{|z|}|\nabla u(x+t\h{z})\cdot \h{z}|^2\,dt\,dx=||\nabla u(x)\cdot \h{z}||_2^2. \end{split}
\end{equation}
To show the inequality, we again consider only $u \in C^1(\mt^3)$ and the result follows by density. 
\begin{equation}
\begin{split}
\int_{\mt^3}|D_zu(x)-\h{z}\cdot \nabla u(x)|^2 =& \int_{\mt^3}\left|\frac{1}{|z|}\int_0^{|z|}\h{z}\cdot \nabla u(x+t\h{z}) -\h{z}\cdot \nabla u(x)\,dt\right|^2\,dx\\
\leq & \frac{1}{|z|}\int_{\mt^3}\int_{0}^{|z|}|\nabla u(x+t\h{z})-\nabla u(x)|^2\,dt\,dx\\
=& \int_{0}^{|z|}||(T_{t\h{z}}-I)\nabla u||_2^2\\
\leq & \sup\limits_{|\xi|\leq |z|} ||(T_{\xi}-I)\nabla u||_2^2.
\end{split}
\end{equation}
That this tends to zero as $|z|\to 0$ follows from the fact that $\xi \to T_\xi\nabla u$ is continuous on a compact set and $T_0=I$. 
\end{enumerate}
\end{proof}
\end{proposition}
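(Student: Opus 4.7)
The plan is to prove the four statements in order, as each uses the preceding ones plus a standard density/approximation argument. For (1), I would fix $\phi\in\mathcal{D}(\mt^3)$ and push the translation from $u$ onto $\phi$ via the change of variables $x\mapsto x-z$ (which is allowed by translational invariance of integration on $\mt^3$); writing $\langle T_zu-u,\phi\rangle = \int u(x)(\phi(x-z)-\phi(x))\,dx$, I would estimate $|\phi(x-z)-\phi(x)|\leq \text{Lip}(\phi)|z|$ and apply Cauchy--Schwarz in $L^2$ to obtain an $O(|z|)$ bound. This actually gives the distributional limit against smooth test functions, which is what is claimed.

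For (2), the key observations are that $\|T_zu\|_2=\|u\|_2$ by translational invariance and that (1) together with this uniform bound upgrades to weak $L^2$ convergence $T_zu\rightharpoonup u$; since $\|T_zu\|_2\to \|u\|_2$ trivially, weak convergence with convergence of norms yields strong $L^2$ convergence. For (3) I would mimic the argument of (1): against $\phi\in\mathcal{D}(\mt^3)$, pull the difference quotient onto the test function so that $\langle D_{tz}u,\phi\rangle = \int u(x)\,\tfrac{1}{t|z|}(\phi(x-tz)-\phi(x))\,dx$, then use that $\tfrac{1}{t|z|}(\phi(\cdot-tz)-\phi(\cdot))\to -\h z\cdot\nabla\phi$ uniformly (by smoothness of $\phi$) to conclude that the limit is $\int \h z\cdot\nabla u\,\phi\,dx$ after integrating by parts.

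For (4), the natural route is to first establish both inequalities for $u\in C^1(\mt^3)$ and then extend by density using the fact that $C^1\cap W^{1,2}$ is dense in $W^{1,2}(\mt^3)$ and that both sides of each inequality are continuous in the $W^{1,2}$ topology. For $C^1$ functions, write $D_zu(x)=\tfrac{1}{|z|}\int_0^{|z|}\h z\cdot\nabla u(x+t\h z)\,dt$ by the fundamental theorem of calculus; then Cauchy--Schwarz in $t$ followed by Fubini and translational invariance yields $\|D_zu\|_2\leq \|\h z\cdot\nabla u\|_2$. The second inequality follows from the same representation applied to $D_zu(x)-\h z\cdot\nabla u(x)=\tfrac{1}{|z|}\int_0^{|z|}\h z\cdot(\nabla u(x+t\h z)-\nabla u(x))\,dt$, again via Cauchy--Schwarz and Fubini, bounding each integrand in $t$ by $\sup_{|\xi|\leq|z|}\|(T_\xi-I)\nabla u\|_2^2$.

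The one place requiring some care is the conclusion that this supremum tends to zero as $|z|\to 0$. This is precisely property (2) applied componentwise to $\nabla u\in L^2(\mt^3,\mathbb{R}^3)$: the map $\xi\mapsto T_\xi(\nabla u)$ is continuous into $L^2$ and equals $\nabla u$ at $\xi=0$, and continuity on the compact ball $\{|\xi|\leq |z|\}$ implies the supremum converges to zero. This is the main (though mild) obstacle, since it is what justifies calling the estimate a genuine convergence result and what ties property (4) back to property (2); the other steps are essentially mechanical once the right change of variables or fundamental theorem of calculus representation is in place.
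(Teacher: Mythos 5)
Your proposal is correct and follows essentially the same route as the paper: translation onto the test function with a Lipschitz/Cauchy--Schwarz bound for (1) and (3), norm preservation plus weak convergence with convergence of norms for (2), and the fundamental theorem of calculus representation with Cauchy--Schwarz, Fubini and a density argument for (4). Your closing observation that the vanishing of $\sup_{|\xi|\leq|z|}\|(T_\xi-I)\nabla u\|_2$ is exactly part (2) applied componentwise to $\nabla u$ is precisely the justification the paper uses.
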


\begin{proposition}\label{propLimsup}
Let $b_\epsilon \in W^{1,2}(\mt^3,\mathbb{R}^k)$ with $b_j \overset{W^{1,2}}{\to}b$. Then 
\begin{equation}
\lim\limits_{\epsilon \to 0} \int_{\mt^3}\int_{\mt^3}\frac{1}{\epsilon^2}K_\epsilon(x-y)\cdot(b_\epsilon(x)-b_\epsilon(y))^{\otimes 2}\,dy \,dx=\int_{\mt^3} L\nabla b(x)\cdot \nabla b(x)\,dx,
\end{equation}
where 
\begin{equation}L\nabla b(x)\cdot \nabla b(x) = \sum\limits_{i,j=1}^3\left(\int_{\mathbb{R}^3}K(z)z_iz_j\,dz\right)\frac{\partial b}{\partial x_i}(x)\cdot \frac{\partial b}{\partial x_j}(x).
\end{equation}
\end{proposition}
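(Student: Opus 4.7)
The plan is to unfold the periodic kernel $K_\epsilon$ into an integral over $\r3$ via the substitution $y = x - \epsilon z$, then reduce the claim to a pointwise-in-$z$ convergence that can be integrated by dominated convergence. Using the $2\pi$-periodicity of $b_\epsilon$, one first checks that
\begin{equation*}
\int_{\mt^3}\int_{\mt^3} K_\epsilon(x-y)\cdot(b_\epsilon(x)-b_\epsilon(y))^{\otimes 2}\,dy\,dx = \frac{1}{\epsilon^3}\int_{\mt^3}\int_{\r3} K\!\left(\frac{x-y}{\epsilon}\right)\cdot(b_\epsilon(x)-b_\epsilon(y))^{\otimes 2}\,dy\,dx;
\end{equation*}
substituting $y = x - \epsilon z$ (Jacobian $\epsilon^3$) and using $b_\epsilon(x)-b_\epsilon(x-\epsilon z) = -\epsilon|z|D_{-\epsilon z}b_\epsilon(x)$ rewrites the quantity of interest as
\begin{equation*}
\frac{1}{\epsilon^2}\int_{\mt^3}\int_{\mt^3} K_\epsilon(x-y)\cdot(b_\epsilon(x)-b_\epsilon(y))^{\otimes 2}\,dy\,dx = \int_{\mt^3}\int_{\r3}|z|^2 K(z)\cdot(D_{-\epsilon z}b_\epsilon(x))^{\otimes 2}\,dz\,dx.
\end{equation*}

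Next, for each fixed $z \in \r3$, I would show $D_{-\epsilon z}b_\epsilon \to -\h{z}\cdot\nabla b$ in $L^2(\mt^3,\mathbb{R}^k)$. By the triangle inequality this splits into two pieces, both controlled via part 4 of the preceding difference-quotient proposition. The first, $\|D_{-\epsilon z}(b_\epsilon - b)\|_2$, is bounded by $\|\h{z}\cdot\nabla(b_\epsilon - b)\|_2 \leq \|\nabla(b_\epsilon - b)\|_2$, which vanishes by the $W^{1,2}$ convergence $b_\epsilon \to b$. The second, $\|D_{-\epsilon z}b + \h{z}\cdot\nabla b\|_2$, is bounded by $\sup_{|\xi| \leq \epsilon|z|}\|(T_\xi - I)\nabla b\|_2$, which vanishes as $\epsilon \to 0$ by $L^2$-continuity of translation. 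Multiplying by $|z|$ and taking tensor squares yields, for each $z$,
\begin{equation*}
\int_{\mt^3}|z|^2 K(z)\cdot(D_{-\epsilon z}b_\epsilon(x))^{\otimes 2}\,dx \;\longrightarrow\; \int_{\mt^3}K(z)\cdot(z\cdot\nabla b(x))^{\otimes 2}\,dx.
\end{equation*}

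To pass the limit under the outer $z$-integral I would apply dominated convergence. Assumption \ref{technAssumptionUpperBound} gives $\lambda_{\max}(K(z)) \leq M g(z)$, so the $x$-integral above is bounded by $M|z|^2 g(z)\|D_{-\epsilon z}b_\epsilon\|_2^2 \leq M|z|^2 g(z)\sup_\epsilon \|\nabla b_\epsilon\|_2^2$, and $|z|^2 g(z)$ is integrable by Assumption \ref{techAssumptionIntegrable}, providing the required dominating function uniformly in $\epsilon$. Expanding $z\cdot\nabla b(x) = \sum_i z_i \partial_i b(x)$ and pulling the $z$-integral past the finite sum then delivers
\begin{equation*}
\int_{\mt^3}\int_{\r3}K(z)\cdot(z\cdot\nabla b(x))^{\otimes 2}\,dz\,dx = \int_{\mt^3}\sum_{i,j=1}^{3}\left(\int_{\r3}K(z)z_iz_j\,dz\right)\partial_i b(x)\cdot\partial_j b(x)\,dx,
\end{equation*}
which is the claimed $\int_{\mt^3} L\nabla b\cdot \nabla b\,dx$.

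The main technical subtlety is the pointwise-in-$z$ $L^2$ convergence of $D_{-\epsilon z}b_\epsilon$, where both the step size $\epsilon$ and the function $b_\epsilon$ vary simultaneously; the uniform-in-$\eta$ operator bound $\|D_\eta u\|_2 \leq \|\nabla u\|_2$ from the preceding proposition is what decouples these two limits cleanly and absorbs the $W^{1,2}$ convergence of the sequence.
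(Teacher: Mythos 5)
Your proposal is correct and follows essentially the same route as the paper: unfold $K_\epsilon$ into an integral over $\r3$, rewrite the quadratic term with the difference quotients $D_{-\epsilon z}b_\epsilon$, control $\|D_{-\epsilon z}(b_\epsilon-b)\|_2$ and $\|D_{-\epsilon z}b+\h{z}\cdot\nabla b\|_2$ via the difference-quotient estimates, and conclude by dominated convergence in $z$ using the integrability of $|z|^2 g(z)$ from Assumptions \ref{technAssumptionUpperBound} and \ref{techAssumptionIntegrable}. The only difference is organizational (you prove pointwise-in-$z$ convergence of the inner $x$-integral before applying dominated convergence, while the paper runs one chain of inequalities on the full double integral), and you in fact track the sign $D_{-\epsilon z}b\to-\h{z}\cdot\nabla b$ more carefully than the paper's displayed estimate does.
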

\begin{proof}
We fully write out $L$, and recall the definition of $K_\epsilon$ as an integral over $\mathbb{R}^3$, and this gives 
\begin{equation}
\begin{split}
&\left|\int_{\mt^3}\int_{\mt^3}\frac{1}{\epsilon^2}K_\epsilon(x-y)\cdot(b_\epsilon(x)-b_\epsilon(y))^{\otimes 2}\,dy - L\nabla b(x)\cdot \nabla b(x)\,dx\right|\\
&\left|\int_{\mt^3}\int_{\r3}\frac{1}{\epsilon^5}K\left(\frac{x-y}{\epsilon}\right)\cdot(b_\epsilon(x)-b_\epsilon(y))^{\otimes 2}\,dy - \int_{\r3}K(z)(z\cdot\nabla b(x))\cdot (z\cdot\nabla b(x))\,dz\,dx\right|\\
=& \left|\int_{\mt^3}\int_{\mathbb{R}^3}\frac{1}{\epsilon^2}K(z)\cdot \left(b_\epsilon(x)-b_\epsilon(x-\epsilon z)\right)^{\otimes 2} - K(z)(z\cdot\nabla b(x))^{\otimes 2}\,dz\,dx\right|\\
\leq & \int_{\mt^3}\int_{\mathbb{R}^3}|K(z)| |z|^2\left|\left(D_{-\epsilon z}b_\epsilon(x)\right)^{\otimes 2}-(\h{z}\cdot \nabla b(x))^{\otimes 2}\right|\,dz\,dx\\
=& \int_{\mathbb{R}^3}|K(z)||z|^2\int_{\mt^3} \left|\left(D_{-\epsilon z}b_\epsilon(x)\right)^{\otimes 2}-(\h{z}\cdot \nabla b(x))^{\otimes 2}\right|\,dx\,dz\\
\leq & \int_{\mathbb{R}^3}|K(z)||z|^2\left(\int_{\mt^3}|D_{-\epsilon z}b_\epsilon(x)-\h{z}\cdot \nabla b(x)|^2\,dx\right)^\frac{1}{2}\left(\int_{\mt^3}|D_{-\epsilon z}b_\epsilon(x)+\h{z}\cdot \nabla b(x)|^2\,dx\right)^\frac{1}{2}\,dz\\
=&\int_{\mathbb{R}^3}|K(z)||z|^2||D_{-\epsilon z}b_\epsilon-\h{z}\cdot \nabla b||_2||D_{-\epsilon z}b_\epsilon+\h{z}\cdot \nabla b||_2\,dz \\
\leq &\int_{\mathbb{R}^3}|K(z)||z|^2\bigg(||D_{-\epsilon z}b_\epsilon||_2+||\nabla b||_2\bigg)\bigg(||D_{-\epsilon z}(b_\epsilon-b)||_2+||D_{-\epsilon z}b-\h{z}\cdot \nabla b||_2\bigg)\,dz \\
\leq &(||\nabla b_\epsilon||_2+||\nabla b||_2) \int_{\mathbb{R}^3}|K(z)||z|^2\left(||\nabla b_\epsilon - \nabla b||_2 + \sup\limits_{t\leq \epsilon|z|}||(T_{-t\h{z}}-I)\nabla b||_2\right)\,dz.
\end{split}
\end{equation}
Now we note that since $\nabla b_\epsilon\overset{L^2}{\to}\nabla b$ and $||(T_{-t\h{z}}-I)\nabla b||_2$ is bounded uniformly in $t,\h{z}$, the integrand is bounded by a constant times $|K(z)||z|^2$ which is integrable, so we may apply dominated convergence. Then using that $\sup\limits_{t\leq \epsilon|z|}||(T_{-t\h{z}}-I)\nabla b||_2\to 0$ pointwise as $\epsilon \to 0$ and the convergence of $b_\epsilon$, the result holds. 
\end{proof}

In order to show the liminf inequality, a key step is that, as a function on $\r3\times\mt^3$, $(z,x)\mapsto g(z)^\frac{1}{2}|z|D_{\epsilon z}Q_\epsilon(x)$ has the limit $g^\frac{1}{2}z\cdot \nabla Q(x)$ in the sense of distributions. The result is far more trivial if we integrate this against test functions which are separable as $\phi(z,x)=\phi_1(x)\phi_2(z)$. By means of the next lemma, we will show that the span of such test functions are dense in $\mathcal{D}(\r3\times\mt^3)$ with uniform convergence, allowing us to test only against such separable functions.

\begin{lemma}
Let $\mathcal{U}$ denote the subalgebra of $\mathcal{D}(\r3\times\mt^3)$ given by the span of separable functions, so that if $\phi \in \mathcal{U}$, $\phi(x,z)=\sum\limits_{i=1}^m \phi_1^i(x)\phi_2^i(z)$ for some $\phi_1^i \in \mathcal{D}(\mt^3)=C^\infty(\mt^3)$ and $\phi_2^i \in \mathcal{D}(\r3)$. Then $\mathcal{U}$ is dense with respect to uniform convergence in $\mathcal{D}(\r3\times\mt^3)$.
\end{lemma}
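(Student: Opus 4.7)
The plan is to apply the Stone--Weierstrass theorem on a sufficiently large compact subset, and then restore compact support via a cutoff. Fix $\phi \in \mathcal{D}(\r3 \times \mt^3)$. Since $\mt^3$ is already compact, there is a compact $K \subset \r3$ with $\supp(\phi) \subset \mt^3 \times K$. Choose a compact $K' \subset \r3$ with $K$ in its interior, together with a cutoff $\chi \in \mathcal{D}(\r3)$ satisfying $\chi \equiv 1$ on $K$ and $\supp(\chi) \subset K'$.

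First, I would verify the hypotheses of the Stone--Weierstrass theorem on the compact Hausdorff space $\mt^3 \times K'$. The collection $\mathcal{U}'$ of functions of the form $\sum_{i=1}^m \phi_1^i(x)\phi_2^i(z)$ with $\phi_1^i \in C^\infty(\mt^3)$ and $\phi_2^i \in C^\infty(K')$ is a unital subalgebra of $C(\mt^3 \times K')$: it contains the constant function $1$, it is closed under sums by construction, and is closed under products via $(\phi_1(x)\phi_2(z))(\psi_1(x)\psi_2(z)) = (\phi_1\psi_1)(x)(\phi_2\psi_2)(z)$ applied termwise. It separates points, because smooth functions on $\mt^3$ and smooth functions on $K'$ each already separate points on their respective factors, so given distinct $(x_1,z_1),(x_2,z_2)$ at least one coordinate can be separated and the other coordinate fixed to $1$. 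Stone--Weierstrass then yields, for any $\eta > 0$, some $\psi \in \mathcal{U}'$ with
\[\|\psi - \phi\|_{L^\infty(\mt^3 \times K')} < \frac{\eta}{1 + \|\chi\|_\infty}.\]

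Next, I would use the cutoff to produce an element of $\mathcal{U}$ itself. Define $\tilde\psi(x,z) = \chi(z)\psi(x,z) = \sum_{i=1}^m \phi_1^i(x)(\chi\phi_2^i)(z)$. Each $\chi\phi_2^i$ extends by zero outside $K'$ to an element of $\mathcal{D}(\r3)$, so $\tilde\psi \in \mathcal{U}$. Because $\chi \equiv 1$ on $K$ and $\phi$ vanishes for $z \notin K$, we have $\chi\phi \equiv \phi$, hence $\tilde\psi - \phi = \chi(\psi - \phi)$, which is supported in $\mt^3 \times K'$. Consequently
\[\|\tilde\psi - \phi\|_{L^\infty(\r3 \times \mt^3)} \leq \|\chi\|_\infty \, \|\psi - \phi\|_{L^\infty(\mt^3 \times K')} < \eta,\]
establishing the desired uniform density.

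No step is especially delicate; the only point worth flagging is that Stone--Weierstrass naturally produces an approximant on a compact set, whereas density must be established inside the compactly supported class $\mathcal{D}(\r3 \times \mt^3)$. The cutoff $\chi$ bridges this gap without altering values on $\supp(\phi)$.
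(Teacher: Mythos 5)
Your proof is correct and follows essentially the same route as the paper: apply Stone--Weierstrass to the algebra of finite sums of separable smooth functions on a compact product set containing the support, then multiply by a cutoff in the $\r3$ variable to land back in $\mathcal{D}(\r3\times\mt^3)$ without changing values on $\supp(\phi)$. The only point to keep tidy is choosing $\supp(\chi)$ inside the interior of $K'$ so that each $\chi\phi_2^i$ extends by zero to a genuinely smooth function on $\r3$, which is easily arranged.
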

\begin{proof}
The result is essentially Stone-Weierstrass. Let $\phi \in \mathcal{D}(\r3\times\mt^3)$. Without loss of generality, assume $\supp(\phi)\subset B_1\times\mt^3$. Let $\mathcal{U}'$ denote the algebra given by the span of all smooth functions on $B_3\times\mt^3$ which are separable, but not necessarily with compact support. We note that this set separates points. This can be seen by taking classical bump functions with $\phi_1(x)=1$ if $x=x_0$ and $0\leq \phi_1(x)<1$ otherwise, and similarly $\phi_2(z)=1$ if $z=z_0$ and $\phi_2(z)\in[0,1)$ otherwise, then their product separates $(x_0,z_0)$ and every other point. Furthermore this is clearly an algebra, and contains a non-zero constant function. Therefore it is dense in $C(B_3\times\mt^3)$ by Stone-Weierstrass. Now let $\phi_0 \in C^\infty(\r3)$ be a cutoff function, so that $\phi_0(z)=1$ if $z \in B_1$, $\phi_0(z) \in (0,1)$ for $1<|z|<2$ and $\phi_0(z)$ if $|z|>2$. Then if $\phi_j \in \mathcal{U}'$ and $\phi_j \to \phi$ in $L^\infty$, then $\tilde{\phi}_j(x,z)=\phi_0(x)\phi_j(x,z)$ must also satisfy $\tilde{\phi}_j\to \phi$ in $L^\infty$, but also $\tilde \phi_j \in \mathcal{U}$. 
\end{proof}

\begin{proposition}\label{propLiminf}
If $L^\infty(\mt^3,\mathbb{R}^k)\ni b_{\epsilon_j} \overset{L^2}{\to} b \in W^{1,2}(\r3,\mathbb{R}^k)$, then 
\begin{equation}
\liminf\limits_{j \to \infty} \int_{\mt^3}\int_{\mt^3}\frac{1}{\epsilon^2}K_\epsilon(x-y)\cdot(b_{\epsilon_j}(x)-b_{\epsilon_j}(y))^{\otimes 2}\,dy \,dx\geq\int_{\mt^3} L\nabla b(x)\cdot \nabla b(x)\,dx.
\end{equation}
\end{proposition}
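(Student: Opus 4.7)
The plan is to reformulate the bilinear integral in terms of rescaled finite-difference quotients, establish weak convergence of these quotients in a weighted Hilbert space, and conclude via lower semicontinuity of a convex continuous quadratic form. Without loss of generality the liminf on the left is finite, and after passing to a realising subsequence (not relabelled) the quantity is uniformly bounded. The change of variables $y=x-\epsilon_j z$ used in \Cref{propLimsup} rewrites this as $Q(u_j):=\int_{\mt^3}\int_{\r3}K(z)\cdot u_j(z,x)^{\otimes 2}\,dz\,dx$, where $u_j(z,x):=|z|D_{-\epsilon_j z}b_{\epsilon_j}(x)$.

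Consider the Hilbert space $H:=L^2(\mt^3\times\r3;\,g(z)\,dz\,dx;\,\mathbb{R}^k)$. By assumption \ref{technAssumptionUpperBound} combined with $g=\lambda_{\min}(K)$, one has the pointwise operator bound $g(z)|v|^2\leq K(z)\cdot v^{\otimes 2}\leq Mg(z)|v|^2$ for every $v\in\mathbb{R}^k$, so $Q$ defines a continuous, convex, non-negative quadratic form on $H$ whose value is equivalent to the squared $H$-norm. In particular, boundedness of $Q(u_j)$ forces $(u_j)$ to be bounded in $H$.

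The crux is to show $u_j\wl u$ in $H$, where $u(z,x):=-z\cdot\nabla b(x)$. By boundedness it suffices to verify convergence against a dense subclass of $H$. The preceding lemma shows that the algebra of separable functions is uniformly dense in $\mathcal{D}(\r3\times\mt^3)$, which in turn is dense in $H$: uniform convergence of test functions with fixed compact support in $z$ implies $H$-convergence since $g$ is locally integrable. For a separable test function $\phi(z,x)=\phi_1(x)\phi_2(z)v$ with $\phi_1\in C^\infty(\mt^3)$, $\phi_2\in\mathcal{D}(\r3)$ and $v\in\mathbb{R}^k$, a periodic integration by parts gives
\begin{equation*}
|z|\int_{\mt^3}D_{-\epsilon_j z}b_{\epsilon_j}(x)\cdot v\,\phi_1(x)\,dx=\frac{1}{\epsilon_j}\int_{\mt^3}\bigl(\phi_1(x+\epsilon_j z)-\phi_1(x)\bigr)\,b_{\epsilon_j}(x)\cdot v\,dx.
\end{equation*}
For each fixed $z$, the uniform convergence $\frac{1}{\epsilon_j}(\phi_1(\cdot+\epsilon_j z)-\phi_1(\cdot))\to z\cdot\nabla\phi_1$ together with the strong $L^2$ convergence $b_{\epsilon_j}\to b$ yields convergence of the inner integral to $-\int_{\mt^3}\phi_1(x)(z\cdot\nabla b(x))\cdot v\,dx$. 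The inner integral is moreover dominated uniformly in $j$ by $\|\nabla\phi_1\|_\infty|z|\|b_{\epsilon_j}\|_{L^\infty}|v|$, so $g(z)|\phi_2(z)||z|$ furnishes an integrable envelope in $z$ (since $\phi_2$ has compact support and $g\in L^1$); dominated convergence delivers the pairing $\langle u_j,\phi\rangle_H\to\langle u,\phi\rangle_H$, and linearity together with density extend weak convergence to all of $H$.

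Since $Q$ is continuous and convex on $H$ it is sequentially weakly lower semicontinuous, whence
\begin{equation*}
\int_{\mt^3}L\nabla b(x)\cdot\nabla b(x)\,dx=Q(u)\leq\liminf_{j\to\infty}Q(u_j),
\end{equation*}
which is the desired inequality. The principal obstacle is the weak-convergence step: the kernel $K$ may be highly singular in $z$ under only the measurability and integrability hypotheses on $g$, so one cannot argue pointwise in $z$ alone. The density of separable test functions, provided by the preceding lemma, is the key enabling ingredient, as it decouples the $x$-dependence (handled by strong $L^2$ convergence of $b_{\epsilon_j}$) from the $z$-dependence (handled by dominated convergence using that $|z|^2 g\in L^1$).
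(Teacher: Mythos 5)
Your proposal is correct and takes essentially the same route as the paper's own proof: rewrite the bilinear term as difference quotients, identify the weak limit by testing against separable test functions supplied by the preceding density lemma (discrete integration by parts on the torus plus strong $L^2$ convergence and dominated convergence in $z$), and conclude by weak lower semicontinuity of a convex, continuous quadratic form -- working in the weighted space $L^2(g(z)\,dz\,dx)$ instead of absorbing the factor $g^{1/2}$ into the functions, as the paper does, is only a cosmetic difference. One small repair: the hypotheses give no uniform bound on $\|b_{\epsilon_j}\|_{L^\infty}$, so the $j$-independent envelope for dominated convergence should instead use $\sup_j\|b_{\epsilon_j}\|_{L^2}$ (finite by the assumed strong $L^2$ convergence, and precisely the bound the paper employs) in place of $\|b_{\epsilon_j}\|_{L^\infty}$.
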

\begin{proof}
Assume the liminf is finite, else there is nothing to prove, and take a subsequence converging to liminf without relabelling. Since the liminf is finite, we can take a constant $C>0$ so that 
\begin{equation}
\begin{split}
C \geq & \int_{\mt^3}\int_{\mt^3}K_{\epsilon_j}(x-y)\left(b_{\epsilon_j}(x)-b_{\epsilon_j}(y)\right)^{\otimes 2}\,dx\,dy\\
=& \int_{\r3}\int_{\mt^3}K(z)\left(b_{\epsilon_j}(x)-b_{\epsilon_j}(x-{\epsilon_j} z)\right)^{\otimes 2}\,dx\,dz\\
=& \int_{\r3}\int_{\mt^3}|z|^2K(z)\cdot \left(D_{-{\epsilon_j}z}b_{\epsilon_j}(x)\right)^{\otimes 2}\,dx\,dz\\
\geq & \int_{\r3}\int_{\mt^3}|z|^2 g(z) |D_{-\epsilon_j z}b_{\epsilon_j}(x)|^2\,dx\,dz.
\end{split}
\end{equation}
This implies that $(z,x)\mapsto |z|g(z)^\frac{1}{2}D_{-{\epsilon_j}z}b_{\epsilon_j}(x)$ is bounded in $L^2(\r3\times\mt^3)$. Thus we take a weakly converging subsequence (not relabelled). We now seek to find the weak limit. We do this by finding the limit in the sense of distributions. In light of the previous lemma, it suffices to test against separable functions. 

Let $\phi \in \mathcal{D}(\r3\times\mt^3)$, with $\phi(x,z)=\phi_1(x)\phi_2(z)$. Then 
\begin{equation}
\begin{split}
&\left|\int_{\r3\times\mt^3}|z|g(z)^\frac{1}{2}D_{-{\epsilon_j}z}b_{\epsilon_j}(x)\phi(x,z)-g(z)^\frac{1}{2}z\cdot \nabla_x\phi(x,z)b(x)\,d(x,z)\right|\\
= & \left|\int_{\r3\times\mt^3}|z|g(z)^\frac{1}{2}\bigg (b_{\epsilon_j}(x)D_{{\epsilon_j}z}\phi(x,z)-b(x)\h{z}\cdot \nabla_x\phi(x,z)\bigg)\,d(x,z)\right|\\
\leq & \int_{\r3\times\mt^3}|z|g(z)^\frac{1}{2}\bigg(|b_{\epsilon_j}(x)|\,|\phi_2(z)|\,|D_{{\epsilon_j}z}\phi_1(x)-\h{z}\cdot\nabla\phi_1(x)|+|\nabla_x \phi(x,z)|\,|b_{{\epsilon_j}}(x)-b(x)|\bigg)\,d(x,z)\\
\leq & \left(\int_{\r3}|z|^2g(z)\,dz\int_{\mt^3}|b_{\epsilon_j}(x)|^2\,dx\right)^\frac{1}{2}\left(\int_{\r3}|\phi_2(z)|^2||D_{{\epsilon_j}z}\phi_1-\h{z}\cdot \nabla\phi_1||_2^2\,dz\right)^\frac{1}{2}\\
&+ ||\nabla_x \phi||_2 \left(\int_{\r3}|z|^2g(z)\,dz\right)^\frac{1}{2}||b_{\epsilon_j}-b||_2\\
\leq & C_1\left(\int_{\r3}|\phi_2(z)|^2\sup\limits_{|\xi|\leq {\epsilon_j}z}||(T_\xi-I)\nabla \phi_1||_2^2\,dz\right)^\frac{1}{2}+C_2||b_{\epsilon_j}-b||_2.
\end{split}
\end{equation}
To deal with the term in the integral, we recall that $\sup\limits_{|\xi|<\epsilon_j z}||(T_\xi-I)\nabla \phi||_2$ is bounded and converges to zero pointwise as $\epsilon_j \to 0$, so we can apply dominated convergence to give that the integral tends to zero in the limit. The second term tends to zero trivially, thus $|z|g(z)^\frac{1}{2}D_{-\epsilon_j z}b_{\epsilon_j}(x)\overset{L^2}{\wl}-g(z)^\frac{1}{2}\h{z}\cdot \nabla b(x)$. We take the convention that when $K(z)=0$, $\frac{1}{g}K(z)=0$ so that the proceeding equalities hold. Now we can apply standard lower semicontinuity results for convex functionals under weak convergence (see e.g. \cite{dacorogna2007direct}), to give that 
\begin{equation}
\begin{split}
&\liminf\limits_{j\to \infty}\frac{1}{{\epsilon_j}^2}\int_{\mt^3}\int_{\mt^3}K_\epsilon(x-y)\cdot (b_{\epsilon_j}(x)-b_{\epsilon_j}(y))^{\otimes 2}\,dy\,dx\\
=& \liminf\limits_{j \to \infty}\int_{\r3\times\mt^3}|z|^2K(z)\cdot (D_{-{\epsilon_j}z}b_{\epsilon_j}(x))^{\otimes 2}\,d(z,x)\\
=& \liminf\limits_{j \to \infty}\int_{\r3\times\mt^3}\left(\frac{1}{g(z)}K(z)\right)\cdot \left(|z|g(z)^\frac{1}{2}D_{-{\epsilon_j}z}b_{\epsilon_j}(x)\right)^{\otimes 2}\,d(z,x)\\
\geq &\int_{\r3\times\mt^3}\frac{1}{g(z)}K(z)\cdot \left(|z|g(z)^\frac{1}{2}\h{z}\cdot \nabla b(x)\right)^{\otimes 2}\,d(z,x)\\
=& \int_{\r3\times\mt^3}K(z)\cdot \left(z\cdot \nabla b(x)\right)^{\otimes 2}\,d(z,x)\\
=& \int_{\mt^3}L\nabla b(x)\cdot \nabla b(x)\,dx. 
\end{split}
\end{equation}
\end{proof}

\begin{theorem}[$\Gamma$-convergence with periodic domains]\label{theoremGammaPeriodic}
We have the $\Gamma$-convergence $\mf_\epsilon \overset{\Gamma}{\to} \mf$, where 
\begin{equation}
\mf(b)=\frac{1}{4}\int_{\mt^3}L\nabla b(x)\cdot \nabla b(x)\,dx
\end{equation}
if $b \in W^{1,2}(\mt^3,\mm)$, and $+\infty$ otherwise. The method of convergence is $L^2$-strong convergence. 
\end{theorem}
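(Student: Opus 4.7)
The plan is to split into the standard liminf and limsup (recovery-sequence) inequalities; essentially all of the analysis has been prepared by the preceding subsection, so what remains is bookkeeping.

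For the liminf inequality, I start from an arbitrary $b_\epsilon \to b$ in $L^2(\mt^3,\mathbb{R}^k)$. If $\liminf_{\epsilon \to 0}\mf_\epsilon(b_\epsilon) = +\infty$ there is nothing to prove, so I pass to a subsequence (not relabelled) along which the liminf is finite. The uniform bound on $\mf_\epsilon(b_\epsilon)$ combined with \Cref{corollaryW12Limit} gives $b \in W^{1,2}(\mt^3,\mathbb{R}^k)$, and combined with \Cref{propCompactMinimisingManifold} forces $b(x) \in \mm$ almost everywhere, so $b$ lies in the effective domain of $\mf$. Discarding the non-negative bulk contribution $\frac{1}{\epsilon^2}\int_{\mt^3} \psi(b_\epsilon)$ and applying \Cref{propLiminf} to the interaction term then yields
\begin{equation*}
\liminf_{\epsilon \to 0} \mf_\epsilon(b_\epsilon) \geq \frac{1}{4}\int_{\mt^3} L\nabla b(x)\cdot \nabla b(x)\,dx = \mf(b).
\end{equation*}

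For the limsup inequality, only the case $b \in W^{1,2}(\mt^3,\mm)$ requires attention, since otherwise $\mf(b) = +\infty$ and any admissible sequence converging to $b$ in $L^2$ trivially satisfies the required inequality. For $b \in W^{1,2}(\mt^3,\mm)$ I propose the constant recovery sequence $b_\epsilon \equiv b$: boundedness of $\mm \subset \mqc$ ensures $b \in L^\infty(\mt^3,\mqc)$ so the sequence is admissible, and $b_\epsilon \to b$ in $L^2$ is automatic. Since $\psi|_\mm \equiv 0$ by the construction of $\psi$, the bulk term vanishes identically for every $\epsilon$, while \Cref{propLimsup} applied to the constant sequence gives
\begin{equation*}
\lim_{\epsilon \to 0} \frac{1}{4\epsilon^2}\int_{\mt^3}\int_{\mt^3} K_\epsilon(x-y)\cdot(b(x)-b(y))^{\otimes 2}\,dy\,dx = \frac{1}{4}\int_{\mt^3} L\nabla b(x)\cdot \nabla b(x)\,dx,
\end{equation*}
so that $\lim_{\epsilon \to 0} \mf_\epsilon(b) = \mf(b)$.

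There is no genuinely hard step. The only thing requiring care is the ordering in the liminf argument: the bounded-energy subsequence must be extracted \emph{before} \Cref{propCompactMinimisingManifold} and \Cref{propLiminf} are invoked, so that the same $b$ is known to belong to $W^{1,2}(\mt^3,\mm)$ when the lower-semicontinuity estimate is applied to the interaction term in isolation. The availability of the constant recovery sequence is a structural feature of the problem: it works precisely because $\psi$ is engineered to vanish on $\mm$ so the $\epsilon^{-2}$-scaled bulk penalty costs nothing along it, leaving only the elastic term, whose limit is exactly what \Cref{propLimsup} delivers.
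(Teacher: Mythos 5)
Your proposal is correct and follows essentially the same route as the paper: drop the non-negative bulk term and invoke \Cref{propLiminf} for the liminf inequality, use the compactness results (\Cref{corollaryW12Limit} together with \Cref{propCompactMinimisingManifold}) to handle limits outside $W^{1,2}(\mt^3,\mm)$, and take constant recovery sequences via \Cref{propLimsup} for the limsup inequality, exactly as in the paper. Your remark about extracting the bounded-energy subsequence first is a slightly more explicit version of the paper's appeal to the compactness theorem, but it is the same argument.
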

\begin{proof}
For the liminf inequality, since $\psi$ is non-negative, using \Cref{propLiminf},
\begin{equation}
\begin{split}
\liminf\limits_{j \to \infty} \mf_{\epsilon_j}(b_{\epsilon_j}) \geq &\liminf\limits_{\epsilon \to 0}\frac{1}{4\epsilon^2}\int_{\mt^3}\int_{\mt^3}K_\epsilon(x-y)\cdot(b_{\epsilon_j}(x)-b_{\epsilon_j}(y))^{\otimes 2}\,dx\,dy \\
\geq & \frac{1}{4}\int_{\mt^3}L\nabla b(x)\cdot \nabla b(x)\,dx.
\end{split}
\end{equation}
For the limsup inequality, we are fortunate in that we can take constant recovery sequences from \Cref{propLimsup}, so that if $b \in W^{1,2}(\mt^3,\mm)$, then $\psi(b)=0$ almost everywhere, and 
\begin{equation}
\begin{split}
\mf_{\epsilon_j}(b)=& \frac{1}{4\epsilon^2}\int_{\mt^3}\int_{\mt^3}K_\epsilon(x-y)\cdot(b(x)-b(y))^{\otimes 2}\,dx\,dy\\
\to & \frac{1}{4}\int_{\mt^3}L\nabla b(x)\cdot \nabla b(x)\,dx. 
\end{split}
\end{equation}
As per the compactness result, if $b \not\in W^{1,2}(\mt^3,\mm)$, then $\mf_{\epsilon_j}(b_{\epsilon_j})$ must blow up if $b_{\epsilon_j} \to b$. 
\end{proof}

\begin{remark}
Our systems will more often than not have to respect the principle of frame indifference. In order to interpret frame indifference we must be able to act rotations $R \in \so3$ on the order parameter space $V$. We denote this group action as $(R,b)\mapsto [R]b$. For example if $V=\mathbb{R}^3$, then $[R]b=Rb$, if $V=\sm3$ as in the Q-tensor theory, then $[R]b=RbR^T$. The kernel will need to satisfy frame indifference also, so that $[R]K(z)[R]^T=K(Rz)$, giving $K(Rz)[R]b\cdot [R]b=K(z)b\cdot b$. Given $b:\Omega\to\mqc$, the act of rotating the system by $R \in \so3$ implies that $\nabla b$ is replaced by $[R]\nabla b R^T$. Therefore frame indifference requires that $L\left([R]AR^T \right)\cdot\left( [R]AR^T\right)= LA\cdot A$ for all $A$ in the tangent space of $\mm$. Should we have frame indifference of $K$ this holds however, as seen in the next proposition.
\end{remark}

\begin{proposition}
Assume $\so3$ is defines group action on $V$ by $R\mapsto [R] \in B(V,V)$, and $K$ satisfies $K(Rz)=[R]K(z)[R]^T$. Then the quadratic form $A\mapsto LA\cdot A$ for $A$ in the tangent space of $\mm$ is frame invariant, so that $L([R]AR^T)\cdot ([R]AR^T) =LA\cdot A$.
\end{proposition}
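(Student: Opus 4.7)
The plan is a direct computation using the explicit formula
\[
LA\cdot A=\int_{\r3}K(z)\cdot(Az)^{\otimes 2}\,dz
\]
given in \Cref{propLimsup}, where $A:\r3\to V$ corresponds to a gradient of a $V$-valued map, viewed as a linear map acting on $z\in\r3$ via $z\cdot A=Az=\sum_i z_iA_i$. Under the substitution $A\mapsto[R]AR^T$ this becomes an integral involving $K(z)$ contracted with $([R]A(R^Tz))^{\otimes 2}$, and the goal is to reduce it back to the original form by matching the rotation of the spatial argument of $K$ with the action $[R]$ on $V$.

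First I would change variables in the integral by setting $w=R^Tz$, so $z=Rw$ and $dz=dw$ since $R\in\so3$; this gives
\[
L([R]AR^T)\cdot([R]AR^T)=\int_{\r3}K(Rw)\cdot([R]Aw)^{\otimes 2}\,dw.
\]
Next I would invoke the frame indifference hypothesis $K(Rw)=[R]K(w)[R]^T$ on the integrand. Expanding the pointwise contraction,
\[
\bigl([R]K(w)[R]^T\bigr)\cdot([R]Aw)^{\otimes 2}=\bigl([R]K(w)[R]^T[R]Aw\bigr)\cdot[R]Aw.
\]

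The final step is to use that the group action $R\mapsto[R]$ is orthogonal on $V$, which is implicit in the setup of frame indifference (and is exactly what makes the equivalence between the two formulations of frame indifference recalled in the preceding remark consistent, since $K(Rz)[R]b\cdot[R]b=K(z)b\cdot b$ forces $[R]^T[R]=I$ on $V$ by polarization and the arbitrariness of $b$). Applying $[R]^T[R]=I$ twice collapses the expression to $K(w)(Aw)\cdot(Aw)$, and integrating over $w\in\r3$ recovers precisely $LA\cdot A$.

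There is essentially no obstacle here beyond bookkeeping: the only non-trivial ingredient is the orthogonality of $[R]$ on $V$, which must be extracted from the framework rather than assumed as an independent hypothesis, but it follows from the compatibility of the two equivalent forms of frame indifference stated in the preceding remark. Everything else reduces to a change of variables and a single application of the hypothesis $K(Rz)=[R]K(z)[R]^T$.
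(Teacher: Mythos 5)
Your proposal is correct and is essentially the paper's own argument: the paper writes $K(z)\cdot\big([R]AR^Tz\big)^{\otimes 2}=\big([R]^TK(z)[R]\big)\cdot\big(A(R^Tz)\big)^{\otimes 2}$, identifies $[R]^TK(z)[R]=K(R^Tz)$ from the hypothesis, and concludes with the change of variables $y=R^Tz$ — the same computation as yours, just with the change of variables performed last instead of first. Both versions tacitly require the action $R\mapsto[R]$ to be orthogonal on $V$ (so that $[R]^T=[R]^{-1}=[R^T]$), which the paper uses silently in the step $[R]^TK(z)[R]=K(R^Tz)$ and which you rightly flag; note, though, that it is best regarded as an implicit standing assumption on the representation (true in the intended cases $[R]b=Rb$ and $[R]Q=RQR^T$) rather than something genuinely forced by polarization from the remark, since the identity $[R]K(z)[R]^T[R]b\cdot[R]b=K(z)b\cdot b$ only constrains $[R]^T[R]$ through the particular operators $K(z)$.
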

\begin{proof}
\begin{equation}
\begin{split}
&L[R]AR^T\cdot [R]AR^T\\
=&\int_{\r3}K(z)\big([R]AR^Tz\big)\cdot \big( [R]AR^Tz\big)\,dz\\
=& \int_{\r3}\big([R]^TK(z)[R]\big) A(R^Tz) \cdot A(R^Tz)\,dz\\
=& \int_{\r3}K(R^Tz) A(R^Tz) \cdot A(R^Tz)\,dz\\
=& \int_{\r3}K(y)Ay\cdot Ay\,dy=LA\cdot A,
\end{split}
\end{equation}
by performing a change of variables $y=R^Tz$. 
\end{proof}

\subsection{Local minimisers}

Since we consider periodic domains and limiting energies purely quadratic in the gradient, we have that global minimisers are in all cases trivial, i.e. any constant map. However, particularly due to the role of the topology of the ground state manifold $\mm$, we expect {\it local} minimisers to provide interesting behaviour, in particular since the energy is typically not convex. We must however do some work to say anything about local minimisers. Standard results of $\Gamma$-convergence tell us that isolated local minimisers of $\mf$ can be approximated by local minimisers of $\mf_\epsilon$ \cite[Theorem 4.1.1]{braides2012local}, but the translational symmetry means that the only isolated local minimiser is the undistorted state. We now aim to reproduce the result for our case with continuous symmetry. Heuristically, we are applying the same proof as the given reference to the equivalence classes by identifying translations of functions.

\begin{definition}
Let $F:L^\infty(\mt^3,\mqc)\to\mathbb{R}\cup\{+\infty\}$. We say that $b_0 \in L^\infty({\mt^3},\mqc)$ is an isolated $L^2$-local minimiser of $F$, modulo translations, if there exists $\delta>0$ so that for all $b \in L^\infty({\mt^3},\mqc)$, if $||b-b_0||_2<\delta$ and $F(b)\leq F(b_0)$, then $b=T_zb_0$ for some $z \in \mathbb{R}^3$.
\end{definition}
\begin{remark}
Note that if $b$ is an isolated $L^2$-local minimiser modulo translations, or a (potentially non-strict) $L^2$-local minimiser, then so is $T_zb$ for all $z$, which follows immediately from the invariance of the energy and that $T_z$ is an automorphism of $L^2({\mt^3},\mqc)$. 
\end{remark}

\begin{lemma}
For $\epsilon>0$, and any closed ball $\bar{B}\subset L^2(\mt^3,\mqc)$ we have that $\min\limits_{b \in \bar{B}}\mf_\epsilon(b)$ exists.
\end{lemma}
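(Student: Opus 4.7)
The plan is to apply the direct method of the calculus of variations. Let $(b_n) \subset \bar B$ be a minimising sequence; since every admissible function takes values almost everywhere in the bounded convex set $\mqc$, the sequence is uniformly bounded in $L^\infty(\mt^3,V)$ and in particular in $L^2$. By reflexivity of $L^2$ I extract a subsequence (not relabelled) with $b_n \wl b$ weakly in $L^2$. The ball $\bar B$ is convex and strongly closed, hence weakly closed, so $b \in \bar B$. That $b(x) \in \mqc$ almost everywhere follows from Mazur's lemma: finite convex combinations of the $b_n$ remain $\mqc$-valued (since $\mqc$ is convex), converge strongly to $b$, and thus admit an almost-everywhere convergent subsequence, so the limit lies in the closed set $\mqc$ almost everywhere.

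The remaining task is lower semicontinuity of $\mf_\epsilon$ along weakly $L^2$-convergent sequences. I would undo the difference-quotient rearrangement carried out before \Cref{defPeriodicVersion} and instead work with the equivalent form (valid up to irrelevant multiplicative and additive constants)
\begin{equation*}
\mf_\epsilon(b) = \frac{1}{\epsilon^2}\int_{\mt^3}\psi_s(b(x))\,dx - \frac{1}{2\epsilon^2}\int_{\mt^3}\int_{\mt^3}K_\epsilon(x-y)\, b(x)\cdot b(y)\,dx\,dy.
\end{equation*}
The first term is convex in $b$ (with the convention $\psi_s \equiv +\infty$ on $\partial\mq$) and hence weakly lower semicontinuous on $L^2$ by the standard theory of convex integral functionals. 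The second term is in fact weakly \emph{continuous}: the integrability of $g$ together with the bound $\lambda_{\max}(K)\leq Mg$ gives $K \in L^1(\r3, B(V,V))$ and therefore $K_\epsilon \in L^1(\mt^3, B(V,V))$; convolution with an $L^1$ torus kernel is a compact operator on $L^2$, since its Fourier multipliers vanish at infinity by Riemann-Lebesgue. Hence $K_\epsilon * b_n \to K_\epsilon * b$ strongly in $L^2$, and the bilinear pairing passes to the limit by the usual weak-strong product argument.

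The main obstacle is that in the difference-quotient form the bulk potential $\psi = \psi_s - \frac{1}{2}K_0 b\cdot b - c_5$ is not itself convex (a concave quadratic has been subtracted from the strictly convex $\psi_s$), so a naive termwise lower semicontinuity argument on that representation fails. The remedy is precisely the rearrangement above: the concave quadratic inside $\psi$ cancels against the constant-coefficient quadratic produced by the diagonal of $(b(x)-b(y))^{\otimes 2}$, leaving a weakly lower semicontinuous convex bulk term plus a weakly continuous bilinear form with a fixed integrable kernel. Combining the two estimates yields $\mf_\epsilon(b) \leq \liminf_{n\to\infty}\mf_\epsilon(b_n) = \inf_{\bar B}\mf_\epsilon$, so the infimum is attained at $b$.
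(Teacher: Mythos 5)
Your proposal is correct and follows the same direct-method skeleton as the paper: undo the difference-quotient rewriting so that the non-convex bulk $\psi$ is replaced by the convex $\psi_s$ plus a convolution-type bilinear form, use convexity for weak lower semicontinuity of the entropic term, and gain weak continuity of the bilinear term from compactness of the convolution operator $b\mapsto K_\epsilon * b$. The only genuine difference is how that compactness is established. The paper notes that $(x,y)\mapsto K_\epsilon(x-y)$ is merely $L^1$, passes from weak $L^2$ to weak $L^1$ convergence of $b_n$, and invokes the characterisation of Eveson (\cite{eveson1995compactness}, Corollary 4.1) that $K_\epsilon *\,\cdot$ is compact from $L^1(\mt^3)$ to $L^\infty(\mt^3)$ if and only if the translates $\{T_{-y}K_\epsilon : y\in\mt^3\}$ are relatively compact in $L^1$, which holds by continuity of translation on the compact torus. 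You instead exploit the torus structure directly: an $L^1(\mt^3)$ kernel acts on $L^2(\mt^3)$ as a Fourier multiplier whose coefficients vanish at infinity by Riemann--Lebesgue, hence is a norm limit of finite-rank truncations and therefore compact on $L^2$; this works componentwise for the $B(V,V)$-valued $K_\epsilon$, whose integrability follows from $0\le g=\lambda_{\min}(K)$ and $\lambda_{\max}(K)\le Mg\in L^1$. Your route is more elementary and self-contained (no external compactness criterion, no detour through $L^1$--$L^\infty$ duality), while the paper's argument is the one that would survive on domains without a Fourier basis. Two small points you gloss over, both harmless here: you should record that $\mf_\epsilon$ is bounded below on $\bar B$ (immediate, since $\psi_s$ is bounded below on the bounded set $\mqc$ and the bilinear term is controlled by $\|K_\epsilon\|_{L^1}\sup_{\mqc}|b|^2$), so the infimum is not $-\infty$; and the Mazur step is not strictly needed, since the constraint $b(x)\in\mqc$ a.e.\ is already encoded in $\bar B$ being a strongly closed convex subset of $L^2(\mt^3,\mqc)$, hence weakly closed.
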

\begin{proof}
We proceed with a standard direct method argument. We first recall that $\mf_\epsilon$ can be written as 
\begin{equation}
\frac{1}{\epsilon^2}\int_{\mt^3}\psi_s(b(x))\,dx-\frac{1}{2\epsilon^5}\int_{\mt^3}\int_{\mt^3}K_\epsilon(x-y)b(x)\cdot b(y)\,dy\,dx.
\end{equation}
$\mf_\epsilon$ trivially admits a lower bound on $\bar{B}$, and $\bar{B}$ is weakly compact in $L^2$, as a convex, strongly closed set bounded in $L^2$-norm. Thus it only remains to show lower semicontinuity. The convexity of $\psi_s$ gives lower semicontinuity of the entropic term. The bilinear form remains to be considered. If $(x,y)\mapsto K_\epsilon(x-y)$ is in $L^2(\mt^3\times\mt^3)$, then the convolution $b\mapsto \int_{\mt^3}K_\epsilon(\cdot-y)b(y)\,dy$ defines a compact linear operator from $L^2$ to $L^2$, which would prove the result. However our assumptions only guarantee that $(x,y)\mapsto K_\epsilon(x-y)$ is an $ L^1$ function. We use instead that if $b_{\epsilon_j}\overset{L^2}{\rightharpoonup}b$, then $b_{\epsilon_j}\overset{L^1}{\rightharpoonup}b$ also. In light of \cite[Corollary 4.1]{eveson1995compactness}, we see that $b\mapsto K_\epsilon * b$ is compact from $L^1$ to $L^\infty$ if and only if $\{T_{-y}K_\epsilon: y \in \mt^3\}$ is relatively compact in $L^1$. This however holds, since $\mt^3$ is compact, and $y \mapsto T_{-y}K_\epsilon$ is continuous in $L^1(\mt^3)$.
\end{proof}

\begin{proposition}
Let $b_0$ be an isolated $L^2$-local minimiser of $\mf$ modulo translations. Then there exists some sequence $\epsilon_j \to 0$ and $L^2$-local minimisers of $\mf_{\epsilon_j}$ converging to $b_0$ in $L^2$.
\end{proposition}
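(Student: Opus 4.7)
The plan is to adapt the standard argument for approximation of isolated local minimisers under $\Gamma$-convergence \cite[Theorem 4.1.1]{braides2012local} to the setting with a continuous symmetry, by working on the translation-invariant ``orbit neighbourhood''
\begin{equation}
K_\rho = \{b \in L^\infty(\mt^3,\mqc) : d(b,b_0) \leq \rho\}, \qquad d(b,b_0) = \inf_{z \in \mt^3}\|b-T_zb_0\|_2,
\end{equation}
for a fixed $\rho \in (0,\delta)$ with $\delta$ the isolation radius. Since $z\mapsto T_zb_0$ is $L^2$-continuous and $\mt^3$ is compact, the infimum defining $d$ is attained, and a standard subsequence argument (pick $z_n$ attaining $d(b_n,b_0)$, extract a convergent subsequence $z_n\to z$, use strong convergence $T_{z_n}b_0\to T_zb_0$ together with weak lower semicontinuity of the $L^2$ norm) shows that $K_\rho$ is $L^2$-weakly closed, hence weakly compact as a bounded subset of $L^2$. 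It is also translation invariant by a change of variables in $d$.

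Next I would obtain a constrained minimiser. The same argument as in the preceding lemma (weak lower semicontinuity of $\psi_s$ by convexity, together with compactness of $b\mapsto K_\epsilon * b$) shows that $\mf_\epsilon$ is weakly lower semicontinuous on $L^2$, so by weak compactness of $K_\rho$ there exists a minimiser $b_\epsilon$ of $\mf_\epsilon$ on $K_\rho$. Since $b_0\in K_\rho\cap W^{1,2}(\mt^3,\mm)$, the constant recovery sequence from \Cref{theoremGammaPeriodic} gives $\mf_\epsilon(b_\epsilon)\leq \mf_\epsilon(b_0)\to \mf(b_0)$, so the energy is uniformly bounded. \Cref{theoremPeriodicCompactness} then provides a subsequence $b_{\epsilon_j}\to b^*$ strongly in $L^2$ with $b^*\in W^{1,2}(\mt^3,\mm)\cap K_\rho$; \Cref{propLiminf} together with non-negativity of $\psi$ yields $\mf(b^*)\leq \mf(b_0)$, and since $\rho<\delta$ the isolation hypothesis forces $b^* = T_{z^*}b_0$ for some $z^*\in\mt^3$.

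The main obstacle, and most delicate step, is translating back to $b_0$: the constrained minimisers $b_{\epsilon_j}$ may a priori sit on the boundary of $K_\rho$, so minimality over $K_\rho$ does not automatically upgrade to $L^2$-local minimality in the ambient space. To circumvent this I set $\tilde b_{\epsilon_j} = T_{-z^*}b_{\epsilon_j}$; by the translation invariance of both $\mf_{\epsilon_j}$ and of $K_\rho$, $\tilde b_{\epsilon_j}$ remains a minimiser of $\mf_{\epsilon_j}$ on $K_\rho$, and now $\tilde b_{\epsilon_j}\to b_0$ strongly in $L^2$. In particular $d(\tilde b_{\epsilon_j},b_0)\leq \|\tilde b_{\epsilon_j}-b_0\|_2\to 0$, so for all sufficiently large $j$ the quantity $\eta_j := \rho-d(\tilde b_{\epsilon_j},b_0)$ is positive, and the triangle inequality for $d$ shows that the open $L^2$-ball of radius $\eta_j$ about $\tilde b_{\epsilon_j}$ is contained in $K_\rho$. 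Minimality on $K_\rho$ therefore promotes $\tilde b_{\epsilon_j}$ to a genuine $L^2$-local minimiser of $\mf_{\epsilon_j}$ converging to $b_0$, as required.
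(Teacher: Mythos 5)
Your proof is correct, and it follows the same skeleton as the paper's: constrained minimisation on a neighbourhood of $b_0$, the constant recovery sequence to bound $\mf_\epsilon(b_\epsilon)\leq\mf_\epsilon(b_0)$, compactness plus the liminf inequality, the isolation hypothesis modulo translations to identify the limit as $T_{z^*}b_0$, and finally translating the constrained minimisers back. The genuine difference is the choice of constraint set. The paper minimises over the plain closed ball $\overline{B}(b_0,\delta)$, which is not translation invariant; there the final step works because the translated minimisers $T_{-z^*}b_{\epsilon_j}$ converge strongly to the centre $b_0$, hence eventually lie in the interior of $\overline{B}$ while still attaining $\min_{\overline{B}}\mf_{\epsilon_j}$, so no invariance of the constraint set is needed. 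You instead minimise over the orbit neighbourhood $K_\rho$ built from the quotient distance $d$, which literally implements the paper's "identify translations" heuristic: this costs you extra verifications the paper avoids (attainment of the infimum in $d$ and weak closedness of $K_\rho$ -- your subsequence argument via compactness of $\mt^3$, continuity of $z\mapsto T_zb_0$ and weak lower semicontinuity of the norm is sound -- together with rerunning the direct-method existence lemma on $K_\rho$ rather than on a ball), but it buys the clean facts that translating a constrained minimiser keeps it a constrained minimiser and that local minimality follows from the triangle inequality for $d$. One micro-step you elide: since the isolation hypothesis is phrased with the plain norm, you should apply it to $T_{-w}b^*$ where $w$ attains $d(b^*,b_0)\leq\rho<\delta$, using translation invariance of $\mf$, to conclude $b^*=T_{z^*}b_0$; this is immediate but deserves a line. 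Both routes rest on the same two pillars, translation invariance of $\mf_\epsilon$ and constant recovery sequences.
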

\begin{proof}
The main ingredients of the proof are that constant recovery sequences can be used, and that $T_z$ is an automorphism for all $z\in\mathbb{R}^3$.

Assume that $\delta>0$ is such that $||b_0-b||_2<2\delta$ and $\mf(b_0)\geq \mf(b)$ implies $b_0=T_zb$ for some $z$. Let $\bar{B}$ be the closed ball of radius $\delta$ about $b_0$. Now let $b_\epsilon \in \argmin\limits_{\bar{B}}\mf_\epsilon$ for each $\epsilon>0$, which must be a non-empty set. It must hold that $\mf_\epsilon(b_\epsilon) \leq \mf_\epsilon(b_0)$, and since $\mf_\epsilon(b_0)\to\mf(b_0)$ this implies $\mf_\epsilon(b_\epsilon)$ is bounded. We thus take a subsequence $b_{\epsilon_j}$ so that $b_{\epsilon_j}\overset{L^2}{\to} b^* \in \bar{B}$. Then we have 
\begin{equation}
\begin{split}
\mf(b^*)\leq &\liminf\limits_{j \to \infty} \mf_{\epsilon_j}(b_{\epsilon_j})\\
\leq & \liminf\limits_{j \to \infty} \mf_{\epsilon_j}(b_0)\\
=& \lim\limits_{\epsilon \to 0} \mf_\epsilon (b_0)\\
=& \mf(b). 
\end{split}
\end{equation}
Thus $b^*$ is so that $||b^*-b_0||_2\leq \delta <2\delta$ and $\mf(b^*)\leq \mf(b)$, so we must have that there exists some $z$ with $T_zb_0=b^*$. Since $T_z$ is continuous, this means that $T_{-z}b_{\epsilon_j} \to T_{-z}b^*=T_{-z}T_zb_0=b_0$. By invariance of the energy, $\mf_{\epsilon_j}(T_{-z}b_{\epsilon_j})=\mf_{\epsilon_j}(b_{\epsilon_j})=\min\limits_{\bar{B}}\mf_{\epsilon_j}$. Furthermore, for $j$ sufficiently large, since $T_{-z}b_{\epsilon_j}\to b_0$, this means $T_{-z}b_{\epsilon_j} \in \text{int }(B)$. This means that $T_{-z}b_{\epsilon_j}$ is a minimiser of $\mf_{\epsilon_j}$ on an open set in $L^2$, i.e. it is an $L^2$-local minimiser. Therefore $T_{-z}b_{\epsilon_j}$ is a sequence of $L^2$-local minimisers of $\mf_{\epsilon_j}$ converging to $b_0$ in $L^2$.
\end{proof}

\section{Bounded domains with electrostatic interactions}\label{secBoundedDomains}

\subsection{The model and assumptions}

\subsubsection{Admissible functions and domains}\label{subsubsecAssumptionsGeneral}
We now consider the free energy on a bounded domain, with electrostatic interactions. Let $\Omega\subset[0,2\pi]^3$ be a Lipschitz domain, so that there exists $\delta_1>0$ with $d(\Omega,\partial[0,2\pi])^3=\delta_1$. If $\Omega\not\subset[0,2\pi]^3$ but is still bounded, then we simply rescale and translate our domain. We consider Lipschitz subdomains $\Omega_\epsilon\subset\Omega$, so that there exists $\delta_\epsilon$ with $d(\Omega_\epsilon,\partial\Omega)>\delta_\epsilon$, and assume $c_7\epsilon^\alpha>\delta_\epsilon >c_6\epsilon^\alpha$ for some $1>\alpha > 0$. We have a boundary data, $b_0 \in W^{1,2}(\Omega,\mm)$, and work in the admissible set 
\begin{equation}
\mathcal{A}_\epsilon =\left\{ b \in L^\infty(\Omega,\mqc):(b_0-b)|_{\Omega\setminus\Omega_\epsilon}=0\right\}.
\end{equation}
Since $\Omega$ is a Lipschitz domain, we can extend $b_0$ to $\tilde{b}_0\in W^{1,2}([0,2\pi]^3,\mathbb{R}^k)$. Furthermore, since $\Omega$ and $\partial[0,2\pi]^3$ are separated, by multiplying by a smooth function which is zero $\mathbb{R}^3\setminus[0,2\pi]^3$ and unity on $\Omega$, we can assume that the extension is in $W^{1,2}_0([0,2\pi]^3,\mathbb{R}^k)$. Even more so, if $P_{\mqc}$ is the projection from $\mathbb{R}^k$ to $\mqc$, then $P_{\mqc}\tilde{b} \in W^{1,2}_0([0,2\pi]^3,\mqc)$, and $(P_{\mqc}\tilde{b}_0-b_0)|_{\Omega}=0$ Therefore we identify $\tilde{b}_0$ with a periodic extension, $P_{\mqc}\tilde{b}_0\in W^{1,2}(\mt^3,\mqc)$. 

With abuse of notation, we identify all functions $b \in \mathcal{A}_\epsilon$ with their extension $\tilde{b}\in W^{1,2}(\mt^3,\mathbb{R}^k)$ with $\tilde{b}\in \mt^3\setminus\Omega=P_{\mqc}\tilde{b}_0$. Importantly, the values outside of $\Omega_\epsilon$ are fixed.

We also impose a decay assumption on $g$. We assume there exists $C>0$ and $p>5$ with $(1-\alpha)(p-3)>2$ so that if $|z|$ is sufficiently large, then $g(z)\leq C|z|^{-p}$.

\subsubsection{Electrostatic interactions}\label{subsubsecElectrostatic}
We assume that the electrical anisotropy of the media is described by a Lipschitz, tensor-valued function function $A:\mqc \to \mathbb{R}^{3\times 3}$, so that the following holds:
\begin{itemize}
\item $A(b)=A(b)^T$ for all $b \in \mqc$.
\item We have that $\min\limits_{b \in \mqc} \lambda_{\min}(A(b))>0$. Since $A$ is continuous and $\mqc$ compact, $\max\limits_{b \in \mqc}\lambda_{\max}(A(b))<+\infty$ necessarily. 
\end{itemize}

Then the electrostatic contribution to the energy on a domain $\frac{1}{\epsilon}\Omega$ is given by 
\begin{equation}
-\frac{1}{2}\int_{\frac{1}{\epsilon}\Omega}D(x)\cdot E(x)\,dx,
\end{equation}
where $D$ is the displacement field, assumed to depend on the electric field as $D(x)=A(\tilde{b}(x))E(x)$ for the order parameter field $\tilde{b}\in L^\infty\left(\frac{1}{\epsilon}\Omega,\mathbb{R}^k\right)$. Maxwell's equations at equilibrium and in the absence of external charges gives that $\nabla \times E=0$, $\nabla \cdot D=0$. Therefore we can write this in terms of an electrostatic potential ${\tilde{\phi}} \in W^{1,2}\left(\frac{1}{\epsilon}\Omega,\mathbb{R}\right)$ subject to a Dirichlet condition ${\tilde{\phi}}|_{\frac{1}{\epsilon}\partial\Omega}={\tilde{\phi}}_0$, as 
\begin{equation}
\max\limits_{{\tilde{\phi}} \in {\tilde{\phi}}_0+W^{1,2}_0\left(\frac{1}{\epsilon}\Omega,\mathbb{R}\right)}-\frac{1}{2}\int_{\frac{1}{\epsilon}\Omega}A(\tilde{b}(x))\nabla {\tilde{\phi}}(x)\cdot \nabla {\tilde{\phi}}(x)\,dx. 
\end{equation}
In this case we see that $\nabla \cdot D=0$ is simply the Euler-Lagrange equation for this maximisation problem, which is satisfied by only the unique maximiser. By performing a change of variables $x\mapsto \epsilon y$, $\tilde{b}(x)=b(\epsilon x)$, $\tilde{\phi}(x)=\phi(\epsilon x)$, electrostatic term becomes 
\begin{equation}\label{eqElectrostaticTerm}
\frac{1}{\epsilon}\mathcal{E}(b)=\max\limits_{\phi \in \phi_0+W^{1,2}(\Omega,\mathbb{R})}-\frac{1}{2\epsilon}\int_{\Omega}A(b(y))\nabla \phi(y) \cdot \nabla \phi(y)\,dx. 
\end{equation}

This brings in a technical point in that our problem to be solved will no longer by a minimisation problem, but rather a saddle-point problem in $b$ and $\phi$. However, by viewing $\mathcal{E}$ as just a function of $b$, and hiding the dependence on $\phi$, we can overcome these issues. We have the following results that simplify the analysis

\begin{proposition}\label{propElectrostatic}
Let $\mathcal{E}(b)$ be as given in \eqref{eqElectrostaticTerm}. Then the following hold:
\begin{enumerate}
\item $\mathcal{E}(b)$ is uniformly bounded from below. 
\item If $b_k \overset{L^2}{\to} b$, then $\mathcal{E}(b_k)\to \mathcal{E}(b)$. That is, $\mathcal{E}$ is continuous with respect to strong $L^2$ convergence.  
\item If $\Phi_b$ denotes the unique solution of the maximisation problem in \eqref{eqElectrostaticTerm} for a given $b$, then if $b_k \overset{L^2}{\to}b$, $\Phi_{b_k}\overset{W^{1,2}}{\to}\Phi_b$. 
\end{enumerate}
\end{proposition}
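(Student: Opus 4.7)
The plan is to exploit the variational structure that, for each fixed $b$, the functional $\phi\mapsto -\tfrac{1}{2\epsilon}\int_\Omega A(b)\nabla\phi\cdot\nabla\phi\,dx$ is strictly concave on the affine space $\phi_0 + W^{1,2}_0(\Omega)$ thanks to the uniform ellipticity $\lambda_{\min}:=\inf_{b\in\mqc}\lambda_{\min}(A(b))>0$, so the maximum is attained at a unique $\Phi_b$ characterised by the weak form of $\nabla\cdot(A(b)\nabla\Phi_b)=0$ with $\Phi_b-\phi_0 \in W^{1,2}_0(\Omega)$. Part (1) is immediate: testing the variational problem at the particular competitor $\phi=\phi_0$ gives $\mathcal{E}(b)\geq -\frac{\lambda_{\max}}{2\epsilon}\|\nabla\phi_0\|_{L^2}^2$ uniformly in $b$, where $\lambda_{\max}:=\sup_{b\in\mqc}\lambda_{\max}(A(b))<+\infty$ by assumption.

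For (3), I would first obtain a uniform $W^{1,2}$ bound on $\Phi_{b_k}$ by writing $\eta_k=\Phi_{b_k}-\phi_0$, testing the Euler--Lagrange equation against $\eta_k$ to get $\int_\Omega A(b_k)\nabla\eta_k\cdot\nabla\eta_k = -\int_\Omega A(b_k)\nabla\phi_0\cdot\nabla\eta_k$, and combining Cauchy--Schwarz with ellipticity and Poincar\'e to bound $\|\eta_k\|_{W^{1,2}}$ by a multiple of $\|\nabla\phi_0\|_{L^2}$. Passing to any subsequence with $b_k\to b$ a.e.\ and $\Phi_{b_k}\rightharpoonup \Phi^*$ weakly in $W^{1,2}$, I would identify $\Phi^*=\Phi_b$ by letting $k\to\infty$ in $\int_\Omega A(b_k)\nabla\Phi_{b_k}\cdot\nabla\psi\,dx = 0$ for arbitrary $\psi\in W^{1,2}_0(\Omega)$, splitting it as $\int(A(b_k)-A(b))\nabla\Phi_{b_k}\cdot\nabla\psi + \int A(b)\nabla\Phi_{b_k}\cdot\nabla\psi$ and using that $A(b_k)\to A(b)$ strongly in every $L^q$, $q<\infty$ (by the Lipschitz property of $A$, boundedness of $\mqc$, and dominated convergence) while $\nabla\Phi_{b_k}$ is bounded in $L^2$.

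To upgrade weak to strong convergence I use the identity
\begin{equation*}
\int_\Omega A(b_k)\nabla(\Phi_{b_k}-\Phi_b)\cdot\nabla(\Phi_{b_k}-\Phi_b)\,dx = -\int_\Omega A(b_k)\nabla\Phi_b\cdot\nabla(\Phi_{b_k}-\Phi_b)\,dx,
\end{equation*}
obtained by testing the Euler--Lagrange equation for $\Phi_{b_k}$ against $\Phi_{b_k}-\Phi_b\in W^{1,2}_0(\Omega)$. The right-hand side tends to zero because $A(b_k)\nabla\Phi_b\to A(b)\nabla\Phi_b$ strongly in $L^2$ while $\nabla(\Phi_{b_k}-\Phi_b)\rightharpoonup 0$ weakly in $L^2$; uniform ellipticity on the left then yields $\nabla\Phi_{b_k}\to \nabla\Phi_b$ in $L^2$, and combined with weak $W^{1,2}$ convergence this gives $\Phi_{b_k}\to\Phi_b$ in $W^{1,2}$. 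Uniqueness of the limit makes the extraction unnecessary, so the full sequence converges. Part (2) is then immediate: $\mathcal{E}(b_k) = -\tfrac{1}{2\epsilon}\int A(b_k)\nabla\Phi_{b_k}\cdot\nabla\Phi_{b_k}$ converges to $\mathcal{E}(b)$ by the strong $L^2$ convergence of $\nabla\Phi_{b_k}$ combined with bounded a.e.\ convergence of $A(b_k)$. The only delicate step is passing to the limit in the nonlinear coefficient $A(b_k)$ against the merely weakly convergent $\nabla\Phi_{b_k}$, which is controlled entirely by Lipschitz continuity of $A$ together with compactness of $\mqc$.
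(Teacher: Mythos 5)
Your proof is correct, but it takes a genuinely different route from the paper: the paper disposes of all three claims in one line by citing \cite[Appendix A]{cesana2009strain}, observing only that Lipschitz continuity of $A$ together with $b_k\overset{L^2}{\to}b$ gives $A(b_k)\overset{L^2}{\to}A(b)$, so that the hypotheses of that external result apply. You instead supply a self-contained elliptic argument: the competitor $\phi=\phi_0$ for the uniform lower bound, the energy estimate on $\Phi_{b_k}-\phi_0$ via ellipticity and Poincar\'e, weak $W^{1,2}$ compactness, identification of the weak limit through the Euler--Lagrange equation, the energy identity obtained by testing against $\Phi_{b_k}-\Phi_b\in W^{1,2}_0(\Omega)$ to upgrade weak to strong convergence, the subsequence-uniqueness principle to recover convergence of the full sequence, and continuity of $\mathcal{E}$ as a consequence. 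What this buys is independence from the reference, at the cost of a page of standard estimates; what the paper's citation buys is brevity, while hiding exactly the mechanism you make explicit (bounded pointwise convergence of the coefficient against strongly convergent gradients). Two cosmetic points in your write-up: with the normalisation of \eqref{eqElectrostaticTerm} the factor $\frac{1}{\epsilon}$ has already been pulled out of $\mathcal{E}$, so the lower bound should read $\mathcal{E}(b)\geq -\tfrac{1}{2}\lambda_{\max}\|\nabla\phi_0\|_{L^2}^2$ (harmless, since $\epsilon$ is fixed in that step); and when killing the cross term $\int_\Omega (A(b_k)-A(b))\nabla\Phi_{b_k}\cdot\nabla\psi\,dx$ it is cleaner to argue that $(A(b_k)-A(b))\nabla\psi\to 0$ strongly in $L^2$ (boundedness of $A$ on $\mqc$, a.e.\ convergence along a subsequence, dominated convergence) and pair this with the $L^2$-bounded $\nabla\Phi_{b_k}$, since convergence of $A(b_k)-A(b)$ in every $L^q$ with $q<\infty$ does not by itself pair with a product that is only bounded in $L^1$.
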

\begin{proof}
These results follow from \cite[Appendix A]{cesana2009strain}, since $A$ is Lipschitz, implying that if $b_k \overset{L^2}{\to}b$, then $A(b_k)\overset{L^2}{\to}A(b)$.
\end{proof}

Thus the saddle-point energy, with $x$ dependence suppressed for brevity when unambiguous, is 
\begin{equation}
\begin{split}
\int_{\frac{1}{\epsilon}\Omega}\psi_s(\tilde b)\,dx-\frac{1}{2}\int_{\frac{1}{\epsilon}\Omega}\int_{\frac{1}{\epsilon}\Omega} K(x-y)\tilde b(x)\cdot\tilde b(y)\,dy\,dx -\frac{1}{2}\int_{\frac{1}{\epsilon}\Omega}A(\tilde b)\nabla \tilde \phi \cdot \nabla \tilde \phi \,dx.
\end{split}
\end{equation}
Performing a change of variables as we did in \eqref{eqChangeOfVariables}, this means we consider 
\begin{equation}
\begin{split}
\min\limits_{b}\max\limits_{\phi}&\frac{1}{\epsilon^3}\int_{\Omega}\psi_s(b)\,dx-\frac{1}{2\epsilon^6}\int_{\Omega}\int_{\Omega} K\left(\frac{x-y}{\epsilon}\right)b(x)\cdot b(y)\,dy\,dx \\
&-\frac{1}{2\epsilon}\int_{\Omega}A(b)\nabla \phi \cdot \nabla \phi \,dx\\
=\min\limits_{b}&\frac{1}{\epsilon^3}\int_{\Omega}\psi_s(b)\,dx-\frac{1}{2\epsilon^6}\int_{\Omega}\int_{\Omega} K\left(\frac{x-y}{\epsilon}\right)b(x)\cdot b(y)\,dy\,dx+\frac{1}{\epsilon}\mathcal{E}(b)
\end{split}
\end{equation}

We thus multiply through by $\epsilon$ as before and consider $\min\limits_{b \in \mathcal{A}_\epsilon} \mathcal{G}_\epsilon'(b)$, with 

\begin{equation}
\begin{split}
\mathcal{G}_\epsilon'(b)=&\frac{1}{\epsilon^2}\int_{\Omega}\psi(b(x))\,dx+\frac{1}{2\epsilon^5}\int_{\Omega}\int_{\Omega}K\left(\frac{x-y}{\epsilon}\right)b(y)\,dy\,dx+\mathcal{E}(b). 
\end{split}
\end{equation}

\subsection{$\Gamma$-convergence}\label{subsecGammaBoundedDomain}

With the model set up, we now turn to investigating the $\Gamma$-limit. The key idea is that given $b \in \mathcal{A}_\epsilon$, we can find an appropriate extension $\tilde{b}\in L^\infty(\mt^3,\mathbb{R}^k)$, so that $\mathcal{G}_\epsilon'(b)$ is asymptotically equivalent to $\mathcal{F}_\epsilon(\tilde{b})$ with a constraint. At this point, the result follows essentially from the results for periodic domains. The bulk of this subsection is devoted to showing that the ``error-term" is asymptotically small so that such an embedding into the periodic domain is successful.
Heuristically, the error term is directly related to the non-locality of the bilinear form. By partitioning the periodic domain into $\mt^3\setminus\Omega$ and $\Omega$,  the bilinear form obtains a cross-term involving an integral over $\mt^3\setminus \Omega \times \Omega$, which would not exist for a local energy functional. In order to show this cross-term does not effect the minimisation in an asymptotic sense, we exploit the fact that $K$ decays faster than $d(\mt^3\setminus\Omega,\Omega_\epsilon)$ approaches zero. We now proceed to find the appropriate estimates.

\begin{proposition}
There exists $\tilde{C}>0$ so that if $\epsilon>0$ is sufficiently small, then for all $x \in \Omega_\epsilon$, $\frac{1}{\epsilon^3}\int_{\mathbb{R}^3\setminus\Omega}\frac{1}{\epsilon^3}K\left(\frac{x-y}{\epsilon}\right)\,dx \leq \tilde{C}\epsilon^{(1-\alpha)(p-3)}I$. 
\end{proposition}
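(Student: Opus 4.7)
The plan is to reduce the operator-valued bound to a scalar tail bound for $g$, then exploit the separation between $\Omega_\epsilon$ and $\partial\Omega$ together with the polynomial decay of $g$. Since $K$ is symmetric and satisfies $\lambda_{\max}(K(z))\leq M g(z)$ by assumption \ref{technAssumptionUpperBound}, we have the operator inequality $K(z)\leq Mg(z)I$ pointwise. Therefore it suffices to produce a scalar estimate of the form
\begin{equation}
\frac{1}{\epsilon^3}\int_{\mathbb{R}^3\setminus\Omega}g\!\left(\frac{x-y}{\epsilon}\right)dy \;\leq\; C'\epsilon^{(1-\alpha)(p-3)}
\end{equation}
for $x\in\Omega_\epsilon$ and $\epsilon$ small, uniformly in $x$.

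Next, I would change variables $z=(y-x)/\epsilon$, which gives $dy=\epsilon^3\,dz$ and transforms the region of integration into $U_{x,\epsilon}:=\{z\in\mathbb{R}^3:x+\epsilon z\notin\Omega\}$. Since $g$ is even by \ref{techAssumptionSymmetry}, the integral becomes $\int_{U_{x,\epsilon}}g(z)\,dz$, and the $\epsilon^3$ factors cancel, removing the awkward extra $\tfrac{1}{\epsilon^3}$ appearing in the statement.

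The key geometric input is the separation assumption $d(\Omega_\epsilon,\partial\Omega)>\delta_\epsilon>c_6\epsilon^\alpha$: for any $x\in\Omega_\epsilon$ and any $y\notin\Omega$, we have $|x-y|\geq c_6\epsilon^\alpha$, hence $|z|\geq c_6\epsilon^{\alpha-1}$. Because $\alpha<1$, the threshold $c_6\epsilon^{\alpha-1}\to\infty$ as $\epsilon\to 0$, so for $\epsilon$ sufficiently small the entire region $U_{x,\epsilon}$ lies in the range where the decay estimate $g(z)\leq C|z|^{-p}$ is valid. It follows that
\begin{equation}
\int_{U_{x,\epsilon}}g(z)\,dz \;\leq\; C\int_{|z|\geq c_6\epsilon^{\alpha-1}}|z|^{-p}\,dz \;=\; 4\pi C\int_{c_6\epsilon^{\alpha-1}}^{\infty}r^{2-p}\,dr.
\end{equation}
Since $p>5>3$, the radial integral converges and evaluates (up to an absolute constant) to $(c_6\epsilon^{\alpha-1})^{3-p}=c_6^{3-p}\,\epsilon^{(1-\alpha)(p-3)}$. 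Absorbing all constants (including $M$ from the operator reduction) into $\tilde{C}$ yields the claimed bound. I don't expect a substantial obstacle here — everything is dictated by the assumptions, and the only subtlety is to observe that the lower bound $c_6\epsilon^{\alpha-1}$ eventually exits the bounded bad set where $g(z)\leq C|z|^{-p}$ may fail, which is ensured by $\alpha<1$ and the qualifier ``for $\epsilon$ sufficiently small'' in the statement.
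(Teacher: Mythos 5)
Your proposal is correct and follows essentially the same route as the paper: bound $K(z)\leq Mg(z)I$ via assumption \ref{technAssumptionUpperBound}, change variables to $z=(x-y)/\epsilon$, use the separation $d(\Omega_\epsilon,\mathbb{R}^3\setminus\Omega)>\delta_\epsilon\geq c_6\epsilon^\alpha$ to restrict to $|z|\geq \delta_\epsilon/\epsilon$, and integrate the tail of $|z|^{-p}$ to obtain the rate $\epsilon^{(1-\alpha)(p-3)}$, with the decay estimate valid only for $\epsilon$ small. Your observation that the duplicated $\frac{1}{\epsilon^3}$ in the statement is spurious (it cancels after the change of variables) is also consistent with how the paper's own proof treats the quantity.
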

\begin{proof}
First note that if $x \in \Omega_\epsilon$, $d(x,\r3\setminus\Omega)>\delta_\epsilon\geq c_6\epsilon^\alpha$. Therefore 
\begin{equation}
\begin{split}
&\int_{\mathbb{R}^3\setminus\Omega}\frac{1}{\epsilon^3}K\left(\frac{x-y}{\epsilon}\right)\,dx\\
= & \int_{\frac{1}{\epsilon}(x-\mathbb{R}^3\setminus\Omega)}K(z)\,dz\\
\leq & M \int_{\r3\setminus B_{\frac{\delta_\epsilon}{\epsilon}}}g(z)I\,dz\\
\leq & \frac{4\pi MC}{3-p} \left[r^{3-p}\right]_{\frac{\delta_\epsilon}{\epsilon}}^\infty I\\
=& \frac{4\pi MC}{3-p} \left(\frac{\epsilon}{\delta_\epsilon}\right)^{p-3}I\leq \tilde{C}\epsilon^{(1-\alpha)(p-3)}I.
\end{split}
\end{equation}
The estimate on $g$ holds for $\frac{\delta_\epsilon}{\epsilon}\geq c_6\frac{1}{\epsilon^1-\alpha}$ sufficiently large, i.e. $\epsilon$ sufficiently small. 
\end{proof}

\begin{corollary}
There exists some remainder term $R^1_\epsilon:\mathcal{A}_\epsilon\to\mathbb{R}$ which tends to zero uniformly in its argument so that 
\begin{equation}
\frac{1}{2\epsilon^5}\int_{\Omega_\epsilon}\left(\int_{\Omega}K\left(\frac{x-y}{\epsilon}\right)\,dy \right)b(x)\cdot b(x)\,dx=\frac{1}{2\epsilon^2}\int_{\Omega_\epsilon}K_0b(x)\cdot b(x)\,dx+R^1_\epsilon(b),
\end{equation}
with $K_0=\int_{\mathbb{R}^3}K(z)\,dz$.
\end{corollary}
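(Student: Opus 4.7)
The plan is to split the inner integral over $\Omega$ into an integral over $\mathbb{R}^3$ minus an integral over $\mathbb{R}^3\setminus\Omega$. The first piece yields exactly $K_0$ after the change of variables $z = (x-y)/\epsilon$, independently of $x \in \Omega_\epsilon$, which produces the main term on the right-hand side. The second piece is precisely what the previous proposition controls, and will furnish the remainder $R^1_\epsilon$.

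Explicitly, I would set
\begin{equation*}
R^1_\epsilon(b) = -\frac{1}{2\epsilon^2}\int_{\Omega_\epsilon}\left(\frac{1}{\epsilon^3}\int_{\mathbb{R}^3\setminus\Omega}K\left(\frac{x-y}{\epsilon}\right)\,dy\right)b(x)\cdot b(x)\,dx,
\end{equation*}
so that the claimed identity holds by construction. The task then reduces to bounding $|R^1_\epsilon(b)|$ uniformly over $b \in \mathcal{A}_\epsilon$.

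For the uniform bound, I would use that $b(x) \in \mqc$ pointwise and that $\mqc$ is a bounded set in $V$, so there exists $C_b > 0$ independent of $b$ with $|b(x)\cdot b(x)| \leq C_b$ almost everywhere. Applied together with the previous proposition, which gives $\frac{1}{\epsilon^3}\int_{\mathbb{R}^3\setminus\Omega}K\left(\frac{x-y}{\epsilon}\right)\,dy \leq \tilde C \epsilon^{(1-\alpha)(p-3)} I$ for all $x \in \Omega_\epsilon$ and all sufficiently small $\epsilon$, this yields
\begin{equation*}
|R^1_\epsilon(b)| \leq \frac{C_b \tilde C |\Omega_\epsilon|}{2}\, \epsilon^{(1-\alpha)(p-3)-2}.
\end{equation*}
Since $|\Omega_\epsilon| \leq |\Omega|$ is itself bounded uniformly in $\epsilon$, and since the decay assumption $(1-\alpha)(p-3) > 2$ is precisely what is imposed on $p$, this estimate shows $R^1_\epsilon(b) \to 0$ as $\epsilon \to 0$ uniformly in $b \in \mathcal{A}_\epsilon$.

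There is no real obstacle here: all the analytic work was already done in the preceding proposition, and the corollary is essentially a bookkeeping step converting the pointwise upper bound on the tail integral into an $L^\infty$-control of the quadratic remainder. The only subtle point is verifying that the exponent $(1-\alpha)(p-3)-2$ is strictly positive, which is ensured by the decay assumption on $g$ stated in \Cref{subsubsecAssumptionsGeneral}.
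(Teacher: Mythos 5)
Your proposal is correct and follows essentially the same route as the paper: split the inner integral over $\Omega$ into the full-space integral (which gives $K_0$ after rescaling) minus the tail over $\mathbb{R}^3\setminus\Omega$, define $R^1_\epsilon$ as that tail term, and bound it uniformly using the boundedness of $\mqc$ together with the preceding proposition's estimate, concluding via $(1-\alpha)(p-3)>2$ and $|\Omega_\epsilon|\leq|\Omega|$. No gaps.
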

\begin{proof}
This follows since 
\begin{equation}
\begin{split}
&\left|\frac{1}{2\epsilon^5}\int_{\Omega_\epsilon}\left(\int_{\Omega}K\left(\frac{x-y}{\epsilon}\right)\,dy \right)b(x)\cdot b(x)\,dx-\int_{\Omega_\epsilon}K_0b(x)\cdot b(x)\,dx\right|\\
=&\left|\frac{1}{2\epsilon^5}\int_{\Omega_\epsilon}\left(\int_{\mathbb{R}^3\setminus\Omega}K\left(\frac{x-y}{\epsilon}\right)\,dy \right)b(x)\cdot b(x)\,dx\right|\\
\leq & |\Omega_\epsilon|\frac{||b||_\infty^2}{2\epsilon^2}\sup\limits_{x \in \Omega_\epsilon}\int_{\mathbb{R}^3\setminus\Omega}\frac{1}{\epsilon^3}\left|K\left(\frac{x-y}{\epsilon}\right)\right|\,dx\,dy\\
\leq &\tilde C|\Omega_\epsilon|\frac{||b||_\infty^2}{2}\epsilon^{(1-\alpha)(p-3)-2}.
\end{split}
\end{equation}
Since $\mqc$ is bounded and by assumption $(1-\alpha)(p-3)>2$ and $\mathcal{L}^3(\Omega_\epsilon)\leq \mathcal{L}^3(\Omega)$, the result follows.
\end{proof}

\begin{proposition}
There exists a positive constant $\tilde C>0$, so that for $\epsilon >0$ sufficiently small and $b \in \mathcal{A}_\epsilon$, 
\begin{equation}
\int_{\Omega}\int_{\Omega} \left(K_\epsilon(x-y)-\frac{1}{\epsilon^3}K\left(\frac{x-y}{\epsilon}\right)\right)(b(x)-b(y))\cdot (b(x)-b(y))\,dx\,dy\leq \tilde C\epsilon^{p-3}.
\end{equation}
\end{proposition}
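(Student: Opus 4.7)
The plan is to exploit the fact that the difference $K_\epsilon(x-y)-\frac{1}{\epsilon^3}K\!\left(\frac{x-y}{\epsilon}\right)$ is precisely the ``aliasing tail'' in the periodization of $K$: by \Cref{defPeriodicVersion},
\begin{equation*}
K_\epsilon(x-y)-\frac{1}{\epsilon^3}K\!\left(\frac{x-y}{\epsilon}\right)=\frac{1}{\epsilon^3}\sum_{k\in\mathbb{Z}^3\setminus\{0\}}K\!\left(\frac{x-y+2\pi k}{\epsilon}\right).
\end{equation*}
The task therefore reduces to showing that the sum over $k\neq 0$ contributes at most $O(\epsilon^{p-3})$ when integrated against $(b(x)-b(y))^{\otimes 2}$ on $\Omega\times\Omega$. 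Since $b$ takes values in the bounded set $\mqc$, we may bound $|b(x)-b(y)|^2 \leq C_1$ uniformly, and since $\lambda_{\max}(K)\leq Mg$, the whole estimate reduces to controlling $\epsilon^{-3}\sum_{k\neq 0}\iint_{\Omega\times\Omega}g\!\left(\frac{x-y+2\pi k}{\epsilon}\right)dx\,dy$.

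Next I would establish a geometric lower bound on $|x-y+2\pi k|$ for $x,y\in\Omega$ and $k\neq 0$. Using the hypothesis $d(\Omega,\partial[0,2\pi]^3)\geq\delta_1$, one has $\Omega\subset[\delta_1,2\pi-\delta_1]^3$, so for any $k\neq 0$ the sets $\Omega$ and $\Omega-2\pi k$ are separated by at least $2\delta_1$ (looking at a coordinate where $k_j\neq 0$). Combining this with the crude estimate $|x-y+2\pi k|\geq 2\pi|k|-\mathrm{diam}(\Omega)\geq \pi|k|$ for $|k|\geq 2\sqrt{3}$ yields a unified bound of the form $|x-y+2\pi k|\geq c\max(1,|k|)$, with $c>0$ depending only on $\delta_1$.

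With this in hand, I would invoke the polynomial decay assumption $g(z)\leq C|z|^{-p}$ for $|z|$ sufficiently large. For $\epsilon$ small enough the quantity $\frac{|x-y+2\pi k|}{\epsilon}\geq \frac{2\delta_1}{\epsilon}$ exceeds the threshold for the decay estimate, so
\begin{equation*}
\frac{1}{\epsilon^3}g\!\left(\frac{x-y+2\pi k}{\epsilon}\right)\leq C\epsilon^{p-3}|x-y+2\pi k|^{-p}.
\end{equation*}
Summing and integrating, the contribution from a fixed finite set of small $|k|$ is bounded by a constant multiple of $(2\delta_1)^{-p}\mathcal{L}^3(\Omega)^2$, and the tail $\sum_{|k|\geq 2\sqrt{3}}|k|^{-p}\mathcal{L}^3(\Omega)^2$ converges because $p>5>3$. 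Multiplying everything out produces the required $\tilde C\epsilon^{p-3}$.

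The main obstacle I expect is the geometric bound for the ``nearest-neighbour'' lattice vectors $|k|=1$, since for such $k$ the naive bound $2\pi|k|-\mathrm{diam}(\Omega)$ is useless or even negative; this is exactly where the separation $\delta_1$ between $\Omega$ and $\partial[0,2\pi]^3$ does the work, and it is the only place in the argument where the assumption $\Omega\subset[0,2\pi]^3$ (rather than merely $\Omega\subset\mt^3$) actually matters. Beyond that, the remaining steps are routine Fubini plus the tail-summability condition on the lattice, which is provided by $p>5$.
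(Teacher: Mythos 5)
Your proposal is correct, and it follows the paper's overall strategy in its reductions: both arguments identify the difference $K_\epsilon(x-y)-\epsilon^{-3}K\left(\frac{x-y}{\epsilon}\right)$ as the sum of the nonzero lattice translates, bound $|b(x)-b(y)|^2$ by a constant using the boundedness of $\mqc$, replace $K$ by $Mg$ via the eigenvalue assumption, and then exploit the polynomial decay of $g$ together with the separation $\delta_1$ between $\Omega$ and $\partial[0,2\pi]^3$. Where you diverge is in how the tail is actually estimated. The paper refolds the lattice sum into a single integral: it uses the identity that the sum over $k\neq 0$ of the translates, integrated over one periodic cell, equals the integral of $\epsilon^{-3}\left|K\left(\frac{x-y}{\epsilon}\right)\right|$ over $y\in\mathbb{R}^3\setminus\mathbb{T}^3$; after the change of variables $z=\frac{x-y}{\epsilon}$ the domain lies in $\{|z|\geq \delta_1/\epsilon\}$, and a single radial integral of $r^{2-p}$ gives the factor $\left(\frac{\epsilon}{\delta_1}\right)^{p-3}$. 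You instead keep the sum over $k$, prove the pointwise geometric bound $|x-y+2\pi k|\geq c\max(1,|k|)$ (with the $|k|=1$ case handled by the $2\delta_1$ separation, exactly as you flag), apply the decay bound termwise to get $C\epsilon^{p-3}|x-y+2\pi k|^{-p}$, and conclude by summability of $\sum_{k\neq 0}|k|^{-p}$, which holds since $p>5>3$. Both routes are valid and yield the same rate $O(\epsilon^{p-3})$; the paper's unfolding identity is a touch slicker and produces a constant proportional to $|\Omega|$ from one radial integral, while your lattice-sum version is more hands-on, trades the unfolding step for the explicit nearest-neighbour geometry, and makes transparent exactly where the separation $\delta_1$ and the exponent condition on $p$ enter.
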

\begin{proof}
Since $b$ is bounded it suffices to provide an estimate the term involving $K$. In this case, 
\begin{equation}
\begin{split}
&\int_{\Omega}\int_{\Omega} \left(K_\epsilon(x-y)-\frac{1}{\epsilon^3}K\left(\frac{x-y}{\epsilon}\right)\right)(b(x)-b(y))\cdot (b(x)-b(y))\,dx\,dy\\
\leq & 2||b||_\infty^2\int_{\Omega}\int_{\mt^3} \left|K_\epsilon(x-y)-\frac{1}{\epsilon^3}K\left(\frac{x-y}{\epsilon}\right)\right|\,dx\,dy\\
=& 2||b||_\infty^2\int_{\Omega}\int_{\mt^3} \left|\sum\limits_{k \in \mathbb{Z}^3\setminus\{0\}}\frac{1}{\epsilon^3}K\left(\frac{x-y+2\pi k}{\epsilon}\right)\right|\,dx\,dy\\
\leq & 2||b||_\infty^2\int_{\Omega}\int_{\mt^3} \sum\limits_{k \in \mathbb{Z}^3\setminus\{0\}}\frac{1}{\epsilon^3}\left|K\left(\frac{x-y+2\pi k}{\epsilon}\right)\right|\,dx\,dy\\
=& 2||b||_\infty^2\int_{\Omega}\int_{\mathbb{R}^3\setminus \mt^3}\frac{1}{\epsilon^3}\left|K\left(\frac{x-y}{\epsilon}\right)\right|\,dx\,dy\\
\leq & 2M||b||_\infty^2\int_{\Omega}\int_{\mathbb{R}^3\setminus\mt^3}\frac{1}{\epsilon^3}g\left(\frac{x-y}{\epsilon}\right)\,dx\,dy\\
\leq & 2M||b||_\infty^2\int_{\Omega}\int_{\frac{1}{\epsilon}(x-\mathbb{R}^3\setminus\mt^3)}g\left(z\right)\,dx\,dy\\
\end{split}
\end{equation}
Now we note that for all $x \in \Omega$, $\r3\setminus B_{\frac{\delta_1}{\epsilon}}\subset \frac{1}{\epsilon}(x-\mathbb{R}^3\setminus\mt^3)$. Then the remaining integral can be estimated as in the previous result, although it is somewhat simpler since $\Omega$ is uniformly bounded away from $\r3\setminus[0,2\pi]^3$. This gives that for small $\epsilon>0$ the integral can be bounded by 
\begin{equation}
\tilde{C}|\Omega|\epsilon^{p-3}.
\end{equation}
\end{proof}

\begin{definition}
We define the second remainder $R_\epsilon^2:\mathcal{A}_\epsilon\to\mathbb{R}$ by 
\begin{equation}
R_\epsilon^2(b)=\frac{1}{\epsilon^2}\int_{\Omega}\int_{\Omega} \left(K_\epsilon(x-y)-\frac{1}{\epsilon^3}K\left(\frac{x-y}{\epsilon}\right)\right)(b(x)-b(y))\cdot (b(x)-b(y))\,dx\,dy.
\end{equation}
\end{definition}
Note that $R_\epsilon^2\to 0$ uniformly by the previous result. 

\begin{proposition}
There is a constant $\tilde C>0$ so that for $\epsilon>0$ sufficiently small and all admissible $b \in \mathcal{A}_\epsilon$, 
\begin{equation}
\frac{1}{\epsilon^2}\int_{\Omega_\epsilon}\int_{\mt^3\setminus\Omega}K_\epsilon(x-y)\cdot(b(x)-b(y))\cdot (b(x)-b(y))\,dx\,dy\leq \tilde C\epsilon^{(1-\alpha)(p-3)-2}.
\end{equation}
\end{proposition}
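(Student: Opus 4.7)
The plan is to reduce the cross–term estimate to a tail bound on $g$, exactly mimicking the argument used in the two previous propositions, but with the geometry of the pair $(\Omega_\epsilon,\mt^3\setminus\Omega)$ replacing the geometry of $(\Omega_\epsilon,\r3\setminus\Omega)$. First I would use that $b(x),b(y)\in\mqc$ with $\mqc$ bounded, so $|b(x)-b(y)|^2\le 4\|b\|_\infty^2$ uniformly in $b\in\mathcal A_\epsilon$ and pointwise in $x,y$. Combined with the operator bound $\lambda_{\max}(K)\le Mg$ from assumption \ref{technAssumptionUpperBound}, one obtains
\begin{equation}
\frac{1}{\epsilon^{2}}\int_{\Omega_\epsilon}\!\int_{\mt^3\setminus\Omega}K_\epsilon(x-y)\cdot(b(x)-b(y))^{\otimes 2}\,dy\,dx
\;\le\;\frac{4M\|b\|_\infty^{2}}{\epsilon^{2}}\int_{\Omega_\epsilon}\!\int_{\mt^3\setminus\Omega}g_\epsilon(x-y)\,dy\,dx,
\end{equation}
so everything reduces to estimating the inner integral uniformly in $x\in\Omega_\epsilon$.

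Next I would unfold the periodisation of $g_\epsilon$ via the identity in the proof of \Cref{defPeriodicVersion}:
\begin{equation}
\int_{\mt^3\setminus\Omega}g_\epsilon(x-y)\,dy
=\frac{1}{\epsilon^{3}}\int_{S}g\!\left(\frac{x-y}{\epsilon}\right)dy,
\qquad S:=\bigcup_{k\in\mathbb Z^{3}}\bigl(2\pi k+(\mt^3\setminus\Omega)\bigr)=\r3\setminus\bigcup_{k}\bigl(2\pi k+\Omega\bigr).
\end{equation}
The geometric key step is to show that $d(x,S)\ge\delta_\epsilon$ for every $x\in\Omega_\epsilon$ once $\epsilon$ is small. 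For the $k=0$ copy this is just the definition of $\Omega_\epsilon$; for $k\ne 0$ I would use the separation assumption $d(\Omega,\partial[0,2\pi]^3)=\delta_1>0$, which gives $d(x,2\pi k+\Omega^c)\ge\delta_1$ for every $k\ne 0$ and every $x\in\Omega$. Since $\delta_\epsilon\le c_7\epsilon^\alpha<\delta_1$ for small $\epsilon$, the $k=0$ term is the binding one and $d(x,S)\ge\delta_\epsilon\ge c_6\epsilon^\alpha$.

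Given this, a change of variable $w=(x-y)/\epsilon$ puts the integration into $\{|w|\ge\delta_\epsilon/\epsilon\}$, and the polynomial decay $g(w)\le C|w|^{-p}$ (valid for $|w|\ge c_6\epsilon^{\alpha-1}$, which holds for small $\epsilon$) yields
\begin{equation}
\frac{1}{\epsilon^{3}}\int_{S}g\!\left(\frac{x-y}{\epsilon}\right)dy
\;\le\;\int_{|w|\ge \delta_\epsilon/\epsilon}g(w)\,dw
\;\le\;\frac{4\pi C}{p-3}\Bigl(\frac{\delta_\epsilon}{\epsilon}\Bigr)^{-(p-3)}
\;\le\;C'\,\epsilon^{(1-\alpha)(p-3)},
\end{equation}
uniformly in $x\in\Omega_\epsilon$. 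Finally, integrating over $x\in\Omega_\epsilon$ (whose Lebesgue measure is bounded by $|\Omega|$) and collecting the prefactor $1/\epsilon^{2}$ gives the stated bound $\tilde C\epsilon^{(1-\alpha)(p-3)-2}$.

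The main obstacle is the geometric book-keeping in the second paragraph: one has to verify that the contribution from the non-trivial lattice translates $k\ne 0$ is harmless, because one is working on the torus $\mt^3$ rather than on $\r3$ as in the earlier propositions. Once this observation is in place, the rest is a routine repetition of the tail-decay computation already used above, and the exponent $(1-\alpha)(p-3)-2$ is the same quantity that appears in the earlier corollary; the hypothesis $(1-\alpha)(p-3)>2$ ensures the final bound is in fact $o(1)$ uniformly in $b\in\mathcal A_\epsilon$, which is what the subsequent $\Gamma$-convergence argument will require.
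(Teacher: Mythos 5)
Your proposal is correct and follows essentially the same route as the paper: bound $|b(x)-b(y)|$ using the boundedness of $\mqc$, control $K_\epsilon$ by $Mg_\epsilon$, unfold the periodisation, and use the separation $d(\Omega_\epsilon,\mathbb{R}^3\setminus\Omega)\geq\delta_\epsilon\geq c_6\epsilon^\alpha$ together with the polynomial tail of $g$ to get the $O(\epsilon^{(1-\alpha)(p-3)-2})$ bound. In fact you supply more detail than the paper's sketch, which simply refers back to the earlier estimate and silently absorbs the $k\neq 0$ lattice translates into an integral over $\mathbb{R}^3\setminus\Omega$; your explicit check that those translates stay at distance $\geq\delta_1>\delta_\epsilon$ from $\Omega_\epsilon$ is exactly the book-keeping that justifies that step.
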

\begin{proof}
The heuristics follow almost identically to the previous arguments, so for brevity only a sketch is provided for the estimate. 
\begin{equation}
\begin{split}
&\frac{1}{\epsilon^2}\int_{\Omega_\epsilon}\int_{\mt^3\setminus\Omega}K_\epsilon(x-y)(b(x)-b(y))\cdot (b(x)-b(y))\,dx\,dy\\
\leq & \frac{2||b||_\infty^2 M}{\epsilon^2}\int_{\Omega_\epsilon} \int_{\r3\setminus\Omega} g(x-y)\,dx\,dy=O(\epsilon^{(1-\alpha)(p-3)-2})
\end{split}
\end{equation}
by the same argument as before, and tends to zero uniformly in $b$.
\end{proof}

\begin{definition}
Define the third remainder, $R_\epsilon^3:\mathcal{A}_\epsilon\to\mathbb{R}$ by 
\begin{equation}
R_\epsilon^3(b)= \frac{1}{\epsilon^2}\int_{\Omega_\epsilon}\int_{\mt^3\setminus\Omega}K_\epsilon(x-y)(b(x)-b(y))\cdot (b(x)-b(y))\,dx\,dy.
\end{equation}
\end{definition}
Note that under our assumptions, $R_\epsilon^3 \to 0$ uniformly as $\epsilon \to 0$.

\begin{definition}
Let $U_1,U_2 \subset\mathbb{R}^3$ be measurable, and $b \in \mathcal{A}_\epsilon$. Define the bilinear form $B_\epsilon$ as
\begin{equation}
B_\epsilon(b,U_1,U_2)=\int_{U_1}\int_{U_2} K_\epsilon(x-y)(b(x)-b(y))\cdot (b(x)-b(y))\,dx\,dy.
\end{equation}
\end{definition}

\begin{theorem}\label{theoremEquivalentEnergies}
There exists a function $R_\epsilon : \mathcal{A}_\epsilon\to\mathbb{R}$ which tends to zero uniformly as $\epsilon \to 0$, and constants $m_\epsilon$ so that if $c_5=\inf\limits_{b \in \mq}\psi_s(b)-\frac{1}{2}K_0b\cdot b$, then
\begin{equation}
\begin{split}
\mg'_\epsilon(b)=& \frac{1}{\epsilon^2}\int_{\Omega_\epsilon}\psi_s(b(x))-\frac{1}{2}K_0b(x)\cdot b(x)-c_5\,dx\\
&+\frac{1}{4\epsilon^2}\int_{\mt^3}\int_{\mt^3}K_\epsilon(x-y)(b(x)-b(y))\cdot (b(x)-b(y))\,dx\,dy+\mathcal{E}(b)+R_\epsilon(b)+m_\epsilon.  
\end{split}
\end{equation}
\end{theorem}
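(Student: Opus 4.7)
The plan is to massage $\mg'_\epsilon(b)$ into the right-hand side by splitting integrals over $\Omega$ into pieces on $\Omega_\epsilon$ and $\Omega\setminus\Omega_\epsilon$, extending from $\Omega\times\Omega$ to $\mt^3\times\mt^3$, and applying the three remainder estimates $R^1_\epsilon$, $R^2_\epsilon$, $R^3_\epsilon$ established just above. The structural fact that makes this work is that every $b\in\mathcal{A}_\epsilon$ coincides with $b_0$ on $\mt^3\setminus\Omega_\epsilon$, so any integral whose integrand only samples $b$ there is a function of $b_0$ and $\epsilon$ alone, and is absorbable into $m_\epsilon$. The $\mathcal{E}(b)$ term is already present on both sides and need not be touched.

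\textbf{Step 1 (local term and self-interaction).} Write $\tfrac{1}{\epsilon^2}\int_\Omega\psi_s(b)\,dx = \tfrac{1}{\epsilon^2}\int_{\Omega_\epsilon}\psi_s(b)\,dx + \tfrac{1}{\epsilon^2}\int_{\Omega\setminus\Omega_\epsilon}\psi_s(b_0)\,dx$, the second integral entering $m_\epsilon$. Using the evenness of $K$ and the identity $-2Ku\cdot v = K(u-v)\cdot(u-v)-Ku\cdot u-Kv\cdot v$, split the bilinear part of $\mg'_\epsilon$ as
\[
\tfrac{1}{4\epsilon^5}\int_{\Omega\times\Omega}K\!\left(\tfrac{x-y}{\epsilon}\right)(b(x)-b(y))^{\otimes 2}\,dx\,dy \;-\; \tfrac{1}{2\epsilon^2}\int_\Omega\left(\int_\Omega\tfrac{1}{\epsilon^3}K\!\left(\tfrac{x-y}{\epsilon}\right)dy\right)b(x)\cdot b(x)\,dx.
\]
Split the outer integral of the self-interaction at $\Omega_\epsilon$: on $\Omega\setminus\Omega_\epsilon$ everything depends only on $b_0$ (constant, goes into $m_\epsilon$), and on $\Omega_\epsilon$ the corollary for $R^1_\epsilon$ replaces $\int_\Omega\tfrac{1}{\epsilon^3}K(\tfrac{x-y}{\epsilon})\,dy$ by $K_0$ with uniform error $R^1_\epsilon(b)\to 0$. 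Adding and subtracting $c_5/\epsilon^2$ on $\Omega_\epsilon$ groups the local and self-interaction contributions into the target $\tfrac{1}{\epsilon^2}\int_{\Omega_\epsilon}(\psi_s(b)-\tfrac{1}{2}K_0 b\cdot b - c_5)\,dx$.

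\textbf{Step 2 (periodize and extend the finite-difference term).} By the definition of $R^2_\epsilon$, replace $\tfrac{1}{\epsilon^3}K(\tfrac{\cdot}{\epsilon})$ by the periodized kernel $K_\epsilon$ on $\Omega\times\Omega$ at uniform cost $\tfrac{1}{4}R^2_\epsilon(b)\to 0$. Then decompose
\[
\mt^3\times\mt^3 = (\Omega\times\Omega)\,\cup\,2\,(\Omega\times(\mt^3\setminus\Omega))\,\cup\,((\mt^3\setminus\Omega)\times(\mt^3\setminus\Omega)),
\]
the factor $2$ coming from the symmetry of $K_\epsilon$. The integral on $(\mt^3\setminus\Omega)^2$ depends only on $b_0$ and is a constant. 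Further splitting $\Omega = \Omega_\epsilon\cup(\Omega\setminus\Omega_\epsilon)$ in the cross term, the contribution from $(\Omega\setminus\Omega_\epsilon)\times(\mt^3\setminus\Omega)$ depends only on $b_0$ (constant, to $m_\epsilon$), while the contribution from $\Omega_\epsilon\times(\mt^3\setminus\Omega)$ is bounded uniformly in $b$ by $\tfrac{1}{2}R^3_\epsilon(b)\to 0$. Rearranging converts the $\Omega\times\Omega$ integral of $K_\epsilon(b-b)^{\otimes 2}/(4\epsilon^2)$ into the full $\mt^3\times\mt^3$ integral on the right-hand side, modulo controlled errors.

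\textbf{Assembly and main obstacle.} Collect all three uniform corrections into a single $R_\epsilon(b)\to 0$ uniformly on $\mathcal{A}_\epsilon$ and all $b_0$-only pieces (including the $\epsilon$-dependent constant $c_5|\Omega_\epsilon|/\epsilon^2$ and the various purely-$b_0$ bilinear integrals) into $m_\epsilon$; the identity then falls out. The work is largely bookkeeping, but the one genuinely subtle point is checking that the cross contributions $\int_{\Omega_\epsilon\times(\mt^3\setminus\Omega)}K_\epsilon(x-y)(b(x)-b_0(y))^{\otimes 2}\,dx\,dy$ are small \emph{uniformly} in $b$ rather than for each fixed $b$. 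Expanding the square and using the $L^\infty$ bounds on $b$ and $b_0$, this reduces to controlling $\int_{\Omega_\epsilon}\int_{\mathbb{R}^3\setminus\Omega}\tfrac{1}{\epsilon^3}|K(\tfrac{x-y}{\epsilon})|\,dy\,dx$, which is exactly what the decay argument underlying $R^3_\epsilon$ provides.
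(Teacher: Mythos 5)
Your proposal is correct and follows essentially the same route as the paper: split the bulk and self-interaction terms at $\Omega_\epsilon$ using the polarization identity, replace the inner kernel integral by $K_0$ via the $R^1_\epsilon$ estimate, periodize via $R^2_\epsilon$, decompose the bilinear form over $\mt^3\times\mt^3$ with the cross term over $\Omega_\epsilon\times(\mt^3\setminus\Omega)$ controlled by $R^3_\epsilon$, and absorb all $b_0$-only pieces into $m_\epsilon$. The subtlety you flag (uniformity in $b$ of the cross term) is exactly what the paper's $R^3_\epsilon$ proposition provides, so nothing is missing.
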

\begin{proof}
For the bulk term, we we immediately have 
\begin{equation}
\begin{split}
&\int_{\Omega}\psi_s(b(x))-\frac{1}{2}\left(\frac{1}{\epsilon^3}\int_{\Omega}K\left(\frac{x-y}{\epsilon}\right)\,dy\right)b(x)\cdot b(x)\,dx\\
=& \int_{\Omega_\epsilon} \psi_s(b(x))-\frac{1}{2}K_0b(x)\cdot b(x)\,dx+m_\epsilon^1 +R_\epsilon^1(b).
\end{split}
\end{equation}
The constant $m_\epsilon^1=\int_{\Omega\setminus\Omega_\epsilon }K_0b(x)\cdot b(x)\,dx$ depends only on the boundary data $b|_{\Omega\setminus\Omega_\epsilon}$. 

To deal with the non-local term, first estimate this as 
\begin{equation}
\begin{split}
&\frac{1}{\epsilon^2}\int_{\Omega}\int_{\Omega} \frac{1}{\epsilon^3}K\left(\frac{x-y}{\epsilon}\right)(b(x)-b(y))\cdot (b(x)-b(y))\,dx\,dy\\
=& \frac{1}{\epsilon^2}\int_{\Omega}\int_{\Omega} K_\epsilon(x-y)(b(x)-b(y))\cdot (b(x)-b(y))\,dx\,dy+R_\epsilon^2(b)\\
=& \frac{1}{\epsilon^2}B_\epsilon(b,\Omega,\Omega) + R_\epsilon^2(b)
\end{split}
\end{equation}

Then we decompose $B_\epsilon(b,\Omega,\Omega)$ as 
\begin{equation}
\begin{split}
B_\epsilon(b,\Omega,\Omega)=&B_\epsilon(b,\Mt3,\Mt3)-2B_\epsilon(b,\Mt3\setminus\Omega,\Omega)-B_\epsilon(b,\Mt3\setminus \Omega,\Mt3\setminus\Omega)\\
=&B_\epsilon(b,\Mt3,\Mt3)-2B_\epsilon(b,\Mt3\setminus\Omega,\Omega_\epsilon)-2B_\epsilon(\Mt3\setminus\Omega,\Omega\setminus\Omega_\epsilon)-B_\epsilon(b,\Mt3\setminus \Omega,\Mt3\setminus\Omega)\\
=& \bigg[B_\epsilon(b,\Mt3,\Mt3)-2B_\epsilon(b,\Mt3\setminus\Omega,\Omega_\epsilon)\bigg]\\
&-\bigg[2B_\epsilon(\Mt3\setminus\Omega,\Omega\setminus\Omega_\epsilon)+B_\epsilon(b,\Mt3\setminus \Omega,\Mt3\setminus\Omega)\bigg]\\
=& B_\epsilon(b,\Mt3,\Mt3)-2R_3^\epsilon(b)-m^2_\epsilon,
\end{split}
\end{equation}
with $m^2_\epsilon=2B_\epsilon(\Mt3\setminus\Omega,\Omega\setminus\Omega_\epsilon)+B_\epsilon(b,\Mt3\setminus \Omega,\Mt3\setminus\Omega)$ only dependent on the boundary values. 

We now combine these three results, to give that 
\begin{equation}
\begin{split}
&\frac{1}{\epsilon^2}\int_{\Omega}\psi_s(b(x))\,dx-\frac{1}{2\epsilon^5}\int_{\Omega}\int_{\Omega}K\left(\frac{x-y}{\epsilon}\right)b(x)\cdot b(y)\,dx\,dy\\
=&\frac{1}{\epsilon^2}\int_{\Omega_\epsilon}\psi_s(b(x))-K_0b(x)\cdot b(x)-c_5\,dx-c_5|\Omega_\epsilon| +m_\epsilon^1 R^1_\epsilon(b)\\
&+ \frac{1}{\epsilon^2}B_\epsilon(b,\Omega,\Omega)+R_\epsilon^2(b)\\
=&\frac{1}{\epsilon^2}\int_{\Omega_\epsilon}\psi_s(b(x))-K_0b(x)\cdot b(x)-c_5\,dx-c_5|\Omega_\epsilon| +m_\epsilon^1+ R^1_\epsilon(b)\\
&+ \frac{1}{\epsilon^2}B_\epsilon(b,\Mt3,\Mt3)-2\epsilon^2R_\epsilon^3(b)-\frac{m^2_\epsilon}{\epsilon^2}+R_\epsilon^2(b).
\end{split}
\end{equation}
Therefore by defining 
\begin{equation}
\begin{split}
m_\epsilon = & m^1_\epsilon-c_5|\Omega_\epsilon|-\frac{m^2_\epsilon}{\epsilon^2},\\
R_\epsilon=&R_\epsilon^1-2R_\epsilon^3+R_\epsilon^2,
\end{split}
\end{equation}
the result holds. 
\end{proof}

In light of this result, it suffices to consider the $\Gamma$-limit of the asymptotically equivalent energy,
\begin{equation}
\begin{split}
\mg_\epsilon(b)=&\frac{1}{\epsilon^2}\int_{\Omega_\epsilon}\psi(b(x))\,dx+\frac{1}{2\epsilon^2}\int_{\mt^3}\int_{\mt^3}K_\epsilon(x-y)\cdot (b(x)-b(y))^{\otimes 2}\,dx\,dy+\mathcal{E}(b)
\end{split}
\end{equation}

\begin{proposition}
Assume that $b_{\epsilon}\in \mathcal{A}_\epsilon$ with $\mg_\epsilon(b_\epsilon)$ uniformly bounded. Then there is a subsequence $b_{\epsilon_j}$ so that $b_{\epsilon_j}\overset{L^2}{\to}b \in W^{1,2}(\mt^3,\mathbb{R}^k)$, with $b(x) \in \mm$ for almost every $x \in \Omega$. In particular, $b_{\epsilon_j}\overset{L^2}{\to}b \in W^{1,2}(\Omega,\mm)$ with $b|_{\partial\Omega}=b_0|_{\partial\Omega}$. 
\end{proposition}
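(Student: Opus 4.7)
The plan is to reduce the problem to the periodic compactness results of Section~\ref{subsecEstimates} via the fixed extension $\tilde{b}\in W^{1,2}(\mt^3,\mqc)$ of admissible functions, and then add one extra compact-exhaustion step to recover the manifold constraint on $\Omega$.

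\textbf{Step 1 (Energy bounds).} By part 1 of Proposition~\ref{propElectrostatic}, $\mathcal{E}(b_\epsilon)$ is uniformly bounded below. Since $\psi\geq 0$ and $K_\epsilon$ is non-negative definite, the uniform bound on $\mg_\epsilon(b_\epsilon)$ separately controls the two non-negative quantities
\begin{equation*}
\frac{1}{\epsilon^2}\int_{\Omega_\epsilon}\psi(b_\epsilon(x))\,dx \quad\text{and}\quad \frac{1}{\epsilon^2}\int_{\mt^3}\int_{\mt^3}K_\epsilon(x-y)\cdot(b_\epsilon(x)-b_\epsilon(y))^{\otimes 2}\,dx\,dy.
\end{equation*}
The extended $b_\epsilon$ form a uniformly bounded sequence in $L^\infty(\mt^3,\mqc)$, so the second bound plus Corollary~\ref{corollaryW12Limit} yields a subsequence $b_{\epsilon_j}\to b$ strongly in $L^2(\mt^3)$ with $b\in W^{1,2}(\mt^3,\mathbb{R}^k)$.

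\textbf{Step 2 (Manifold constraint on $\Omega$).} Here the argument of Proposition~\ref{propCompactMinimisingManifold} does not apply verbatim, since the bulk $\psi$-control is on the shrinking set $\Omega_\epsilon$ rather than on all of $\mt^3$. Instead, fix any compact set $K\Subset\Omega$; the assumption $d(\Omega_\epsilon,\partial\Omega)\geq c_6\epsilon^\alpha$ forces $K\subset\Omega_\epsilon$ for all sufficiently small $\epsilon$. Passing to an a.e.-convergent subsequence (not relabelled) and using lower semicontinuity of $\psi$ on $\mqc$ together with Fatou's lemma,
\begin{equation*}
\int_K\psi(b(x))\,dx\leq\liminf_{j\to\infty}\int_K\psi(b_{\epsilon_j}(x))\,dx\leq\liminf_{j\to\infty}\int_{\Omega_{\epsilon_j}}\psi(b_{\epsilon_j}(x))\,dx=0,
\end{equation*}
so $\psi(b)=0$ a.e. on $K$. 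Exhausting $\Omega$ by such compact subsets gives $b(x)\in\mm=\psi^{-1}(0)$ for a.e. $x\in\Omega$.

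\textbf{Step 3 (Boundary condition).} By the definition of $\mathcal{A}_\epsilon$ and the extension construction in \Cref{subsubsecAssumptionsGeneral}, every admissible $b_\epsilon$ coincides with the fixed function $h:=P_{\mqc}\tilde{b}_0\in W^{1,2}(\mt^3,\mqc)$ on $\mt^3\setminus\Omega_\epsilon$, and in particular on the $\epsilon$-independent set $\mt^3\setminus\Omega$. Passing to the $L^2$ limit gives $b=h$ a.e. on $\mt^3\setminus\Omega$. Since $b\in W^{1,2}(\mt^3,\mathbb{R}^k)$, its interior and exterior traces on $\partial\Omega$ must agree, and the exterior trace equals that of $h|_\Omega=b_0$, namely $b_0|_{\partial\Omega}$, as required. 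The main obstacle is Step 2: the bulk term is integrated over the moving domain $\Omega_\epsilon$, so one cannot apply lower semicontinuity on $\mt^3$ directly and must instead carry out a compact exhaustion to deduce the manifold constraint on $\Omega$.
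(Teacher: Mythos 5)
Your Steps 1 and 3 follow the paper: separate lower bounds on $\mathcal{E}$ and the bulk term give uniform control of $\frac{1}{\epsilon^2}\int_{\mt^3}\int_{\mt^3}K_\epsilon(x-y)\cdot(b_\epsilon(x)-b_\epsilon(y))^{\otimes 2}\,dx\,dy$, whence \Cref{corollaryW12Limit} produces the $L^2$-convergent subsequence with limit in $W^{1,2}(\mt^3,\mathbb{R}^k)$, and the trace argument (the extended $b_\epsilon$ all equal $P_{\mqc}\tilde b_0$ on $\mt^3\setminus\Omega$, so $b-P_{\mqc}\tilde b_0$ is a $W^{1,2}(\mt^3)$ function vanishing off $\Omega$) is exactly the paper's. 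The problem is the justification in Step 2: the inference that $d(\Omega_\epsilon,\partial\Omega)\geq c_6\epsilon^\alpha$ ``forces $K\subset\Omega_\epsilon$ for small $\epsilon$'' is backwards. The separation condition in \Cref{subsubsecAssumptionsGeneral} only pushes $\Omega_\epsilon$ \emph{away} from $\partial\Omega$; it is perfectly compatible with $\Omega_\epsilon$ being, say, a fixed small ball deep inside $\Omega$, in which case your compact exhaustion never covers most of $\Omega$. The containment you need is an exhaustion property of $\Omega_\epsilon$ (the informal ``$\Omega_\epsilon\to\Omega$'' of the introduction), which is not delivered by the distance bound you cite, so as written this step has a gap.

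The gap is closed, with no exhaustion and no extra hypothesis on $\Omega_\epsilon$, by the observation the paper uses: for $b_\epsilon\in\mathcal{A}_\epsilon$ we have $b_\epsilon=b_0$ on $\Omega\setminus\Omega_\epsilon$, and $b_0\in W^{1,2}(\Omega,\mm)$ takes values in $\mm=\psi^{-1}(0)$, so $\psi(b_\epsilon)=0$ a.e.\ on $\Omega\setminus\Omega_\epsilon$ and therefore $\int_{\Omega_\epsilon}\psi(b_\epsilon(x))\,dx=\int_{\Omega}\psi(b_\epsilon(x))\,dx$. Hence the bulk control you extracted in Step 1 is automatically control on the \emph{fixed} domain $\Omega$, of order $\epsilon_j^2$, and your own Fatou/lower-semicontinuity argument (the argument of \Cref{propCompactMinimisingManifold}, applied on $\Omega$ rather than $\mt^3$) gives $\psi(b)=0$, i.e.\ $b(x)\in\mm$, a.e.\ in $\Omega$ directly. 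So the obstacle you identified (``the bulk term lives on the moving set $\Omega_\epsilon$'') is only apparent: the boundary condition itself transfers the bound to $\Omega$, and with that substitution your proof coincides with the paper's.
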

\begin{proof}

Since the electrostatic and bulk terms admit lower bounds, this implies that 
\begin{equation}
\frac{1}{\epsilon^2}\int_{\mt^3}\int_{\mt^3}K_\epsilon(x-y)\cdot(b_{\epsilon}(x)-b_{\epsilon}(y))^{\otimes 2}\,dx\,dy
\end{equation}
is also uniformly bounded. Therefore by \Cref{corollaryW12Limit}, we can extract a subsequence $b_{\epsilon_j} \overset{L^2}{\to}b \in W^{1,2}(\mt^3,\mathbb{R}^k)$. Furthermore, for $x \in \Omega\setminus \Omega_\epsilon$, $b(x)\in \mm$ almost everywhere. Since $\mm =\psi^{-1}(0)$, this gives 
\begin{equation}
\begin{split}
\frac{1}{{\epsilon_j}^2} \int_{\Omega_{\epsilon_j}} \psi(b_{\epsilon_j}(x))\,dx =&\frac{1}{{\epsilon_j}^2}\int_{\Omega}\psi(b_{\epsilon_j}(x))\,dx
\end{split}
\end{equation}
which is bounded. Thus since the energy is bounded, this implies that $\psi(b_{\epsilon_j}(x))\to 0$ almost everywhere in $\Omega$, so taking a pointwise converging subsequence proves $b(x)\in \mm$ almost everywhere in $\Omega$. In the sense of traces, $b|_{\partial\Omega}=b_0|_{\partial\Omega}$, since $b-b_0$ is a $W^{1,2}$ function supported on the closure of $\Omega$.
\end{proof}

Before turning to the $\Gamma$-convergence result, we establish a lemma required for our construction of recovery sequences.

\begin{lemma}\label{lemmaLipschitzSubdomain}
Let $U\subset \mathbb{R}^n$ be a bounded Lipschitz domain. Then for every $\epsilon>0$ there exists a compact set $A\subset \Omega$ so that $\mathcal{L}^n(U\setminus A)<\epsilon$ and $U\setminus A$ is Lipschitz. 
\end{lemma}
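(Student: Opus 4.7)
The plan is to construct $A$ as the image of $\bar U$ under an ``inward push'' map $\Phi_\delta(x) = x - \delta v(x)$, where $v$ is a Lipschitz vector field on $\bar U$ uniformly transverse to $\partial U$, and then verify that the thin strip $U \setminus A$ inherits a Lipschitz-domain structure from the local graph representations of $\partial U$.

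First, I would construct the transversal Lipschitz vector field. Since $U$ is a bounded Lipschitz domain, $\partial U$ is compact and can be covered by finitely many open balls $B_1,\ldots, B_N$ in which, after a rotation $R_i$ and translation, $U \cap B_i = \{x : x_n > \phi_i(x')\}$ for a Lipschitz function $\phi_i$. By refining the cover I can arrange that on each nonempty overlap $B_i \cap B_j \cap \partial U$, the local axes $R_i^T e_n$ and $R_j^T e_n$ make a uniformly small angle. Choosing a smooth partition of unity $\{\chi_i\}_{i=0}^N$ subordinate to the cover $\{B_0,B_1,\ldots,B_N\}$, where $B_0 \compsub U$ covers the portion of $\bar U$ away from $\partial U$, I set
\begin{equation}
v(x) = \sum_{i=1}^N \chi_i(x) R_i^T e_n.
\end{equation}
Then $v$ is Lipschitz on $\bar U$ and, by the angular refinement, the component $(R_i v)_n$ is bounded below by some $c_0 > 0$ on $B_i \cap \partial U$, uniformly in $i$.

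Next I would define $\Phi_\delta(x) = x - \delta v(x)$. If $L$ is a Lipschitz constant for $v$, then for $\delta < 1/(2L)$ the map $\Phi_\delta$ is a bi-Lipschitz homeomorphism of $\bar U$ onto its image $A := \Phi_\delta(\bar U)$, which is compact; for $\delta$ further bounded in terms of $c_0$ and the boundary Lipschitz constants, $A \subset U$. In each local chart $B_i$, the image $\Phi_\delta(\partial U)$ is expressible as a graph $\{x_n = \phi_i(x') + \delta \psi_i(x')\}$ for a positive Lipschitz function $\psi_i$, by applying the Lipschitz implicit function theorem in the $n$-th coordinate, using the transversality $(R_iv)_n \geq c_0$. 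Consequently, in each local chart, $U \setminus A$ coincides with the strip
\begin{equation}
\{(x', x_n) : \phi_i(x') < x_n < \phi_i(x') + \delta \psi_i(x')\},
\end{equation}
which is a Lipschitz domain, bounded above and below by two disjoint Lipschitz graphs in the same coordinate system. A Fubini-type estimate in these charts then yields $\mathcal{L}^n(U \setminus A) \leq C \delta \mathcal{H}^{n-1}(\partial U)$, which is less than $\epsilon$ for $\delta$ sufficiently small.

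The main obstacle is securing the uniform transversality of $v$: the partition-of-unity averaging can weaken or even cancel the local inward directions $R_i^T e_n$ when neighbouring charts point in substantially different directions on overlaps. This is handled by refining the boundary cover before introducing the partition of unity so that within each overlap the local axes are nearly parallel, which is possible by compactness of $\partial U$ and the local nature of the Lipschitz chart construction. Once transversality is secured, the bi-Lipschitz inverse, the local graph representation of $\Phi_\delta(\partial U)$, and the measure estimate are all routine applications of the Lipschitz implicit function theorem.
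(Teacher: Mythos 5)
The crux of your construction is the claim that the boundary cover can be refined so that overlapping chart axes $R_i^Te_n$, $R_j^Te_n$ make a uniformly small angle, and this is a genuine gap: everything downstream (the bound $(R_iv)_n\geq c_0$, the inclusion $A\subset U$, and the representation of $\Phi_\delta(\partial U)$ as a graph in the \emph{same} chart coordinates) hinges on it, and it is not obtainable ``by refining the cover''. Shrinking the balls does nothing by itself: the admissible graph directions at a boundary point form a cone whose aperture is governed by the Lipschitz constant, not by the scale, so two overlapping charts, however small, may have axes differing by an angle up to $2\arctan L$, which exceeds $\pi/2$ once $L>1$; then the partition-of-unity average can indeed have small or negative component along a given chart axis. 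Choosing the axes coherently so that they vary slowly along $\partial U$ is equivalent to constructing a continuous field of ``good directions'' for a Lipschitz domain --- a true but nontrivial theorem (essentially Ball--Zarnescu), i.e.\ it is the heart of the collar construction rather than a harmless preliminary. Note also that even granting $(R_iv)_n\geq c_0$, this is weaker than what you actually use: to conclude that a pushed boundary point lies in $U$ and that the pushed surface is a Lipschitz graph lying uniformly above $\phi_i$, you need the cone condition $(R_iv)_n> L_i\,|(R_iv)'|$ with a margin, because the horizontal displacement $\delta (R_iv)'$ can raise the graph by $L_i\delta|(R_iv)'|$; with $|(R_iv)'|$ possibly of order one and $L_i$ large, $c_0>0$ alone does not prevent the pushed point from leaving $U$. (There is also a sign slip: with your convention $U\cap B_i=\{x_n>\phi_i(x')\}$, the inward push is $x+\delta v(x)$.) A clean repair is to prove that a positive convex combination of good directions at a point is again a good direction with quantitative constants --- using that the local epigraph is stable under adding positive multiples of each chart axis --- which gives the needed cone condition for the unrefined partition-of-unity field.

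For comparison, the paper's proof avoids all boundary surgery: by Vitali's covering theorem one removes from $U$ a finite union $A$ of pairwise disjoint closed balls compactly contained in $U$ that capture all but $\epsilon$ of the measure; then $\partial(U\setminus A)$ is the union of $\partial U$ and finitely many spheres, all at positive mutual distance, so $U\setminus A$ is Lipschitz with no need to modify or re-parametrise $\partial U$. The observation that makes this work is that $A$ need not be a shrunken copy of $U$ (nor connected) and $U\setminus A$ need not be a collar; your approach can be completed along the lines above, but it requires substantially more machinery than the statement demands.
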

\begin{proof}
The result is essentially Vitali's covering theorem. We denote $x+\overline{B_r}$ the closed ball of radius $r>0$ centred at $x\in\r3$. Since $U$ is open, we can define a function $R:U\to(0,\infty)$ so by $R(x)=\frac{1}{2}\sup\{r>0 : x +B_r\subset\Omega\}$. Then if $r\leq R(x)$, $x+\overline{B_r}$ is compactly supported in $U$. Furthermore $\{x+\overline{B_r}:r<R(x), x \in U\}$ is a Vitali covering of $U$. Therefore we can take a countable pairwise disjoint subcollection $(x_i +\overline{B_{r_i}})_{i \in \mathbb{N}}$, whose union covers $U$ up to a set of measure zero. Even more so if $A_N=\bigcup\limits_{i=1}^N(x_i+\overline{B_{r_i}})$, then $A_N$ is a finite union of disjoint closed balls, compactly supported in $U$, with $\lim\limits_{N \to \infty}\mathcal{L}^n(U\setminus A_N)\to 0$. Thus by taking $A=A_N$ for sufficiently large $N$, we have that $\mathcal{L}^n(U\setminus A)<\epsilon$. Furthermore, since $A_N$ is a union of disjoint balls compactly supported in $U$, we have that $\partial (U\setminus A)=\partial U \cup \left(\bigcup\limits_{i=1}^N x_i +\partial B_{r_i}\right)$, which is a disjoint union of Lipschitz surfaces. Therefore $U\setminus A$ is a Lipschitz domain.

\end{proof}

\begin{theorem}
The functionals $\mathcal{G}_\epsilon \overset{\Gamma}{\to}\mathcal{G}$, where 
\begin{equation}
\mathcal{G}(b)=\int_{\mt^3}\frac{1}{4}L\nabla b(x)\cdot \nabla b(x)\,dx +\frac{1}{2}\int_{\Omega}A(b)^{-1}D(x)\cdot D(x)\,dx +\mathcal{E}(b)
\end{equation}
if $b \in W^{1,2}(\mt^3,\mathbb{R}^k)$ with $b(x) \in \mm$ for almost every $x \in \Omega$ and $b=b_0$ on $\mt^3\setminus\Omega$, and is $+\infty$ otherwise. The mode of convergence is $L^2$ strong convergence. Furthermore let $b_\epsilon \overset{L^2}{\to} b$, and denote the solutions of the maximisation problem defining $\mathcal{E}(b_\epsilon)$ as $\Phi_\epsilon$. Then $\text{div}(A(b_\epsilon)\Phi_\epsilon)=0$ and $\Phi_\epsilon|_{\partial\Omega}=\phi_0$. Then $\Phi_\epsilon\overset{W^{1,2}}{\to}\Phi$, where $\text{div}(A(b)\nabla \Phi)=0$  and $\Phi|_{\partial\Omega}=\phi_0$.  
\end{theorem}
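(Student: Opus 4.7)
The plan is to combine the $\Gamma$-convergence already established on periodic domains with the reduction of \Cref{theoremEquivalentEnergies}. Since $\mg'_\epsilon$ and $\mg_\epsilon$ differ by a uniformly vanishing error $R_\epsilon(b)$ and a constant $m_\epsilon$, both of which preserve $\Gamma$-limits up to an additive constant, it suffices to show $\mg_\epsilon\overset{\Gamma}{\to}\mg$. The required compactness and identification of the domain of $\mg$ are supplied by the proposition immediately preceding the theorem, so only the liminf and limsup inequalities and the convergence $\Phi_\epsilon\to\Phi$ remain to be proved.

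For the liminf inequality, I fix $b_\epsilon\overset{L^2}{\to}b$ and assume without loss of generality that $\liminf_\epsilon \mg_\epsilon(b_\epsilon)<+\infty$, so that $b$ lies in the domain of $\mg$. Since $\psi\geq 0$ the bulk term can be discarded; \Cref{propLiminf} applied to the doubled periodic integral gives
\begin{equation*}
\liminf_\epsilon \frac{1}{2\epsilon^2}\int_{\mt^3}\int_{\mt^3}K_\epsilon(x-y)\cdot(b_\epsilon(x)-b_\epsilon(y))^{\otimes 2}\,dx\,dy\geq \frac{1}{4}\int_{\mt^3}L\nabla b\cdot\nabla b\,dx,
\end{equation*}
while \Cref{propElectrostatic}(2) yields $\mathcal{E}(b_\epsilon)\to\mathcal{E}(b)$, and the same continuity (applied to the $L^2$-continuous map $b\mapsto A(b)^{-1}D(b)\cdot D(b)$, with $D(b)=A(b)\nabla\Phi_b$, which is $L^2$-continuous thanks to \Cref{propElectrostatic}(3)) handles the displacement-field term. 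Summing these contributions yields $\liminf_\epsilon\mg_\epsilon(b_\epsilon)\geq\mg(b)$.

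For the limsup inequality, given $b$ in the domain of $\mg$, my strategy is to construct admissible inner approximations $b^k$ satisfying (a) $b^k-b_0$ has compact support in $\Omega$, (b) $b^k(x)\in\mm$ for almost every $x\in\Omega$, and (c) $b^k\to b$ in $W^{1,2}(\mt^3,\mathbb{R}^k)$. Once such a sequence exists, for each fixed $k$ and all $\epsilon$ small enough that $\supp(b^k-b_0)\subset\Omega_\epsilon$, the constant sequence $b_\epsilon\equiv b^k$ is admissible: its bulk part vanishes identically, its non-local part converges to $\frac{1}{4}\int_{\mt^3}L\nabla b^k\cdot\nabla b^k\,dx$ by \Cref{propLimsup} applied to a constant sequence, and $\mathcal{E}(b^k)$ is $\epsilon$-independent. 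A diagonal argument exploiting the $W^{1,2}$-continuity of $\mg$ on its domain then produces a recovery sequence for $b$, and the $W^{1,2}$ convergence $\Phi_\epsilon\to\Phi$ follows directly from \Cref{propElectrostatic}(3) once $b_\epsilon\overset{L^2}{\to}b$ is in hand.

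The principal obstacle is the construction of $b^k$. The natural candidate $b^k=\eta_k(b-b_0)+b_0$, with $\eta_k$ a smooth cutoff supported in a compact subset of $\Omega$ and increasing to $1$ on all of $\Omega$, converges to $b$ in $W^{1,2}$—the problematic term $\nabla\eta_k\cdot(b-b_0)$ is controlled by Hardy's inequality applied to $b-b_0\in W^{1,2}_0(\Omega,\mathbb{R}^k)$, using \Cref{lemmaLipschitzSubdomain} to organise the geometry of the shrinking shell—but in general violates (b) on the thin transition region where $\eta_k<1$. I would resolve this by composing with a smooth retraction $\pi$ from a tubular neighbourhood of $\mm$ onto $\mm$, defining $b^k=\pi(\eta_k(b-b_0)+b_0)$; such a retraction exists for the manifolds $\mm$ arising in the applications of interest, notably the Oseen-Frank setting in which $\mm$ is a smooth compact submanifold of $\sm3$. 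The Lipschitz property of $\pi$ together with $\pi|_{\mm}=\mathrm{id}$ ensures that properties (a) and (c) survive composition while (b) is enforced on all of $\Omega$.
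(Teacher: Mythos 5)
Your reduction via \Cref{theoremEquivalentEnergies}, the liminf argument (drop the non-negative bulk term, apply \Cref{propLiminf}, use continuity of $\mathcal{E}$ from \Cref{propElectrostatic}), the use of the preceding compactness proposition, and the deduction of $\Phi_\epsilon\overset{W^{1,2}}{\to}\Phi$ from \Cref{propElectrostatic}(3) all track the paper. The genuine gap is in your limsup construction. Your scheme stands or falls with the inner approximations $b^k$ satisfying (a)--(c), and the proposed construction $b^k=\pi\big(\eta_k(b-b_0)+b_0\big)$ does not work: the affine interpolation $\eta_k(x)\big(b(x)-b_0(x)\big)+b_0(x)$ lies on the segment between two points of $\mm$, which is only guaranteed to lie in the convex set $\mqc$ and in general leaves every tubular neighbourhood of $\mm$ (in the Q-tensor case the midpoint of $s^*\big(e_1\otimes e_1-\tfrac13 I\big)$ and $s^*\big(e_2\otimes e_2-\tfrac13 I\big)$ is a maximally biaxial tensor far from the uniaxial manifold), so $\pi$ is undefined exactly where you need it; and no continuous retraction of all of $\mqc$ onto $\mm$ can exist when $\mm$ is not contractible (as for the $\mathbb{R}P^2$-like manifold of the Oseen--Frank application), since $\mqc$ is convex. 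Moreover the theorem is stated in the general framework, where $\mm=\psi^{-1}(0)$ is not assumed to be a smooth compact submanifold admitting a tubular neighbourhood. What you are implicitly invoking --- that any $b$ in the domain of $\mg$ can be approximated strongly in $W^{1,2}$ by $\mm$-valued maps agreeing with $b_0$ in a neighbourhood of $\partial\Omega$ --- is a nontrivial Luckhaus-type gluing/density statement which the cutoff-plus-Hardy argument does not supply; and without property (b) your constant-in-$\epsilon$ sequences pick up a bulk contribution of order $\epsilon^{-2}$ on the fixed transition region, so the diagonalisation collapses.

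The paper's proof avoids the constraint altogether, and this is the missing idea: the recovery sequence need not be $\mm$-valued, it only needs bulk energy $o(1)$ after division by $\epsilon^2$. Using \Cref{lemmaLipschitzSubdomain} one picks a Lipschitz set $U_\epsilon\subset\Omega_\epsilon$ of measure $O(\epsilon^3)$ and replaces $b$ on $U_\epsilon$ by the harmonic extension of its own boundary values. The componentwise maximum principle keeps the modified values in $\text{Conv}(\mm)$, which is compactly contained in $\mq$, so $\psi$ is uniformly bounded there and the bulk term costs $O(\epsilon^3/\epsilon^2)=O(\epsilon)$; Dirichlet minimality of the harmonic replacement gives $\|\nabla b_\epsilon-\nabla b\|_2\leq\big(\int_{U_\epsilon}|\nabla b|^2\,dx\big)^{1/2}\to 0$, hence $b_\epsilon\overset{W^{1,2}}{\to}b$ and \Cref{propLimsup} identifies the limit of the non-local term. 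Two minor further remarks: continuity of $\mathcal{E}$ gives $\mathcal{E}(b_\epsilon)\to\mathcal{E}(b)$ but does not by itself produce the $A(b)^{-1}D\cdot D$ term you list in the liminf (that term is not generated by your argument), and your constants simply inherit the paper's own bookkeeping of the prefactors; neither of these is the substantive issue, which is the recovery-sequence construction.
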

\begin{proof}
The electrostatic term $\mathcal{E}(b)$ is continuous with $L^2$ convergence, so does not effect the result. The liminf inequality follows directly by the same argument as \Cref{theoremGammaPeriodic}. For the limsup inequality, we can no longer use constant recovery sequences, since for $b \in W^{1,2}(\mt^3,\mathbb{R}^k)$ with $b-b_0=0$ on $\mt^3\setminus\Omega$, it is not in general true that $b|_{\Omega\setminus\Omega_\epsilon}=b_0$. We overcome this with an interpolation by harmonic functions.

Let $b \in W^{1,2}(\mt^3,\mathbb{R}^k)$ with $b(\Omega)\subset \mm$ and $b|_{\mt^3\setminus\Omega}=b_0$. Let $E_\epsilon \subset \Omega_\epsilon$ be a compact set, so that $U_\epsilon=\Omega_\epsilon\setminus E_\epsilon$ satisfies $\mathcal{L}^3(U_\epsilon)\leq c_8 \epsilon^3$ with $U_\epsilon \setminus E_\epsilon$ Lipschitz, which exists by \Cref{lemmaLipschitzSubdomain}. Let $b_\epsilon (x)=b(x)$ for $x \in \mt^3\setminus U_\epsilon$, and let $b_\epsilon$ satisfy 
\begin{equation}\label{eqHarmonicReplacement}
\begin{array}{r l l }
\int_{U_\epsilon} \nabla b_\epsilon(x) \cdot \nabla u(x)\,dx= & 0 &(\forall u \in W^{1,2}_0(U_\epsilon)),\\
b_\epsilon(x)= & b(x) &(x \in \partial U_\epsilon).
\end{array}
\end{equation}
Since $U_\epsilon$ is Lipschitz and $b-b_\epsilon \in W^{1,2}_0(U_\epsilon,\mathbb{R}^k)$, the extension by zero is in $W^{1,2}(\mt^3)$, therefore $b \in W^{1,2}(\mt^3,\mathbb{R}^k)$. First we show $b_\epsilon \overset{W^{1,2}}{\to} b$. The maximum principle gives that $b_\epsilon$ admits a uniform bound, so $b_\epsilon$ and $b$ are both bounded and only differ on $U_\epsilon$. This set has which has vanishing measure as $\epsilon \to 0$, so it holds that $b_\epsilon \overset{L^2}{\to} b$. To show that the gradients converge, we use that $b_\epsilon$ is harmonic and $b-b_\epsilon$ can be used to test the weak form of the PDE in \eqref{eqHarmonicReplacement}. Suppressing the $x$ dependence for readability, 
\begin{equation}
\begin{split}
||\nabla b_\epsilon - \nabla b||_2^2 =&\int_{\mt^3}|\nabla b_\epsilon-\nabla b|^2\,dx\\
=&\int_{ U_\epsilon}\left(\nabla b_\epsilon-\nabla b\right)\cdot \left(\nabla (b_\epsilon-b)\right)\,dx\\
=& \int_{ U_\epsilon}\nabla b\cdot \left(\nabla b_\epsilon-\nabla b\right)\,dx\\
\leq & \left(\int_{U_\epsilon}|\nabla b|^2\,dx\right)^\frac{1}{2} ||\nabla b_\epsilon - \nabla b||_2\\
\Rightarrow ||\nabla b_\epsilon - \nabla b ||_2 \leq & \left(\int_{U_\epsilon}|\nabla b|^2\,dx\right)^\frac{1}{2}.
\end{split}
\end{equation}
Since the measure of $U_\epsilon$ tends to zero and $b \in W^{1,2}$, the integral on the right hand side tends to zero as $\epsilon \to 0$, so that $\nabla b_\epsilon \overset{L^2}{\to} \nabla b $ and $b_\epsilon \overset{W^{1,2}}\to b$. Therefore by \Cref{propLimsup}, we have that 
\begin{equation}
\lim\limits_{\epsilon \to 0} \frac{1}{\epsilon^2}\int_{\mt^3}\int_{\mt^3}K_\epsilon(x-y)\cdot\left(b_\epsilon(x)-b_\epsilon(y)\right)^{\otimes 2}\,dx\,dy=\int_{\mt^3}L\nabla b(x)\cdot \nabla b(x)\,dx. 
\end{equation}
It remains to show that the bulk energy of $b_\epsilon$ tends to zero. Again since $ b_\epsilon$ is harmonic we have a maximum principle, and since the boundary data is in $\mm$, we have $b_\epsilon(x)\in \text{Conv}(\mm)$ almost everywhere. We can now obtain an estimate on $\psi_b(b(x))$. First, we note that since $\psi_b$ blows up uniformly at $\partial\mq$, we must have that $\mm$ is compactly supported in $\mq$. Then given $b_\epsilon(x)\in \text{Conv}(\mm)\subset\mq$, using the convexity of $\psi_s$ and non-negativity of $K_0$ we have 
\begin{equation}
\psi(b_\epsilon(x))=\psi_s(b(x))-\frac{1}{2}K_0b(x)\cdot b(x)\leq \max\limits_{\mm}\psi_s=c_9,
\end{equation}
which is a uniform bound on $\psi_b(b_\epsilon)$. Therefore 
\begin{equation}
\begin{split}
\frac{1}{\epsilon^2}\int_{\Omega}\psi_b(b_\epsilon)\,dx=& \int_{U_\epsilon}\psi_s(b_\epsilon)\,dx\\
\leq & \frac{c_9}{\epsilon^2}\int_{U_\epsilon}\,dx\\
\leq & \frac{c_8c_9}{\epsilon^2}\epsilon^3=O(\epsilon).
\end{split}
\end{equation}
Therefore $\lim\limits_{\epsilon \to 0}\frac{1}{\epsilon^2}\int_{\Omega}\psi_b(b_\epsilon)\,dx\to 0$. Combining these, we have $b_\epsilon \to b$, and $\mg_\epsilon(b_\epsilon)\to \mg(b)$, giving the limsup inequality. The results for the convergence of the electrostatic potential are in \Cref{propElectrostatic}.
\end{proof}

\begin{remark}
Whilst the energy contains an elastic term given by the integral over $\mt^3\setminus \Omega$, this is only dependent on the prescribed boundary conditions extension and thus gives a constant which is irrelevant to the minimisation process. In fact we could subtract this constant from $\mg_\epsilon(b)$ to remove this. The condition that $b|_{\mt^3\setminus\Omega}=b_0$ then reduces to a Dirichlet condition on the boundary $\partial\Omega$ as $\mathcal{L}^3(\Omega\setminus\Omega_\epsilon)\to 0$. This gives that the minimisation problem for the $\Gamma$-limit is equivalent to minimising 
\begin{equation}
\frac{1}{4}\int_{\Omega}L\nabla b(x)\cdot \nabla b(x)\,dx
\end{equation}
over $b\in W^{1,2}(\Omega,\mm)$ with $b|_{\partial\Omega}=b_0|_{\partial\Omega}$.
\end{remark}

\section{Application to the Oseen-Frank model}

For definiteness, we now consider a case which will reduce to the Oseen-Frank elastic model. In this case, we take $\mn = \s2$ and $a(p)=p\otimes p-\frac{1}{3} \in \sm3$. We require our interaction kernel to be frame indifferent, so that $K(Rz)a(Rp)\cdot a(Rq)=K(z)a(p)\cdot a(q)$ for all $R \in \so3$. We can see that this highly constrains which bilinear forms are available \cite{smith1971isotropic}, we only have 
\begin{equation}\label{eqLondonDispersion}
K(z)a(p)\cdot a(q)=g_1(z)a(p)\cdot a(q)+g_2(z)(a(p)\h{z})\cdot (a(q)\h{z})+g_3(z)(\h{z}\cdot a(p)\h{z})(\h{z}\cdot a(q)\h{z}),
\end{equation}
where $g_i$ are frame indifferent functions on $\r3$. We assume that $g_i$ have sufficient integrability and decay for our results to hold. It is not necessary that $g_2,g_3$ are non-negative, however $g_1$ will have to be able to compensate. We write this in indices as
\begin{equation}
K(z)_{i_1i_2,j_1j_2}=g_1(z)\delta_{i_1,j_1}\delta_{i_2,j_2}+g_2(z)\delta_{i_1,j_1}\h{z}_{i_2}\h{z}_{j_2}+g_3(z)\h{z}_{i_1}\h{z}_{i_2}\h{z}_{j_1}\h{z}_{j_2},
\end{equation}
with
\begin{equation}
K(z)Q^1\cdot Q^2=\sum\limits_{i_1,i_2,j_1,j_2=1}^3K(z)_{i_1i_2,j_1j_2}Q_{i_1i_2}^1Q_{j_1j_2}^2.
\end{equation}

We note that $K$ admits a symmetry, so that if $\sigma$ is a permutation of $\{1,2,3\}$, then $K(z)_{ij,kl}=K(z)_{\sigma_i\sigma_j,\sigma_k\sigma_l}$. Furthermore $K_{i_1i_2,j_1j_2}=K_{j_1j_2,i_1i_2}$.

For the sake of simplifying later integrals involving moments of isotropic functions, we include the following lemma. 

\begin{lemma}
Let $g \in L^1(\r3)$ be isotropic and have finite fourth moment. Then 
\begin{equation}
\int_{\mathbb{R}^3}\left( z_1^4-3z_1^2z_2^2\right) g(z),dz=0.
\end{equation}
\end{lemma}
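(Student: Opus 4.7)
The plan is to exploit isotropy at the level of the fourth-moment tensor. Define
\begin{equation}
T_{ijkl}=\int_{\mathbb{R}^3}z_iz_jz_kz_l\,g(z)\,dz,
\end{equation}
which is finite by the assumed integrability of $|z|^4g(z)$. Since $g$ is isotropic, $T$ is a symmetric, rotationally invariant tensor of rank four on $\mathbb{R}^3$. The classical representation theorem for isotropic tensors then gives that $T$ lies in the one-dimensional space spanned by the symmetric combination of Kronecker deltas, so there exists a constant $C$ with
\begin{equation}
T_{ijkl}=C\bigl(\delta_{ij}\delta_{kl}+\delta_{ik}\delta_{jl}+\delta_{il}\delta_{jk}\bigr).
\end{equation}

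Specialising indices, $T_{1111}=3C$ while $T_{1122}=C$, and therefore $T_{1111}=3T_{1122}$, which is exactly the claim.

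If one prefers to avoid invoking the classification of isotropic tensors, the same identity can be obtained directly by passing to spherical coordinates $z=r\omega$ with $\omega\in\mathbb{S}^2$ and writing $g(z)=G(r)$. The integral factorises as
\begin{equation}
\int_{\mathbb{R}^3}\bigl(z_1^4-3z_1^2z_2^2\bigr)g(z)\,dz=\left(\int_0^\infty r^6G(r)\,dr\right)\int_{\mathbb{S}^2}\bigl(\omega_1^4-3\omega_1^2\omega_2^2\bigr)\,d\sigma(\omega),
\end{equation}
and standard moment computations on the sphere give $\int_{\mathbb{S}^2}\omega_1^4\,d\sigma=\tfrac{4\pi}{5}$ and $\int_{\mathbb{S}^2}\omega_1^2\omega_2^2\,d\sigma=\tfrac{4\pi}{15}$, so the angular integral vanishes. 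There is no substantive obstacle here; the only point requiring care is justifying the use of Fubini/spherical change of variables, which is immediate from the assumed finite fourth moment.
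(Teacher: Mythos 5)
Your proof is correct, but it takes a genuinely different route from the paper. The paper first uses the interchangeability of the $z_1$ and $z_2$ coordinates to rewrite the integral as $\tfrac12\int_{\mathbb{R}^3}\bigl(z_1^4-6z_1^2z_2^2+z_2^4\bigr)g(z)\,dz$, and then observes that the polynomial $P(z)=z_1^4-6z_1^2z_2^2+z_2^4$ satisfies $P(Rz)=-P(z)$ for $R$ the rotation by $\pi/4$ about $e_3$; a single change of variables with this rotation shows the integral equals its own negative and hence vanishes. That argument uses invariance of $g$ under only two specific rotations and requires no moment computations at all. Your first argument instead invokes the classification of fully symmetric isotropic rank-four tensors, which is heavier machinery but buys more: it pins down the whole fourth-moment tensor $T_{ijkl}=C\bigl(\delta_{ij}\delta_{kl}+\delta_{ik}\delta_{jl}+\delta_{il}\delta_{jk}\bigr)$ up to one constant, from which not only this lemma but also the subsequent identity $\int_{\mathbb{R}^3} g(z)(\hat z\cdot Q\hat z)^2\,dz=\tfrac{2}{3}\int_{\mathbb{R}^3} g(z)\hat z_1^4\,dz\,|Q|^2$ used in the paper could be read off directly. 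Your second argument, via spherical coordinates and explicit sphere moments, is an elementary, fully explicit alternative; the factorisation and the Fubini step are indeed justified once the finite fourth moment is read as $\int_{\mathbb{R}^3}|z|^4|g(z)|\,dz<\infty$ (note $g$ is not assumed non-negative in the statement). All three routes are sound, so there is no gap -- only a difference in taste between a one-rotation symmetry trick and arguments that compute or classify the full fourth-moment structure.
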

\begin{proof}
In order to see this, we note that due to the symmetry of indices, we can write
\begin{equation}
\int_{\mathbb{R}^3}\left( z_1^4-3z_1^2z_2^2 \right)g(z)\,dz=\frac{1}{2}\int_{\mathbb{R}^3} \left(z_1^4-6z_1^2z_2^2+z_2^4\right) g(z)\,dz.
\end{equation}
Next, we see that if $P(z)=z_1^4-6z_1^2z_2^2+z_2^4$, and $R$ is the rotation about $e_3$ of $\frac{\pi}{4}$, then $P(Rz)=-P(z)$ for all $z \in \r3$. This is just an algebraic computation, 
\begin{equation}
\begin{split}
P(Rz)=&\left(\frac{\sqrt{2}}{2}z_1-\frac{\sqrt{2}}{2}z_2\right)^4-6\left(\frac{\sqrt{2}}{2}z_1-\frac{\sqrt{2}}{2}z_2\right)^2\left(\frac{\sqrt{2}}{2}z_1+\frac{\sqrt{2}}{2}z_2\right)^2+\left(\frac{\sqrt{2}}{2}z_1+\frac{\sqrt{2}}{2}z_2\right)^4\\
=&\left(\frac{1}{2}z_1^2+\frac{1}{2}z_2^2-z_1z_2\right)^2-6\left(\frac{1}{2}z_1^2+\frac{1}{2}z_2^2+z_1z_2\right)\left(\frac{1}{2}z_1^2+\frac{1}{2}z_2^2-z_1z_2\right)\\
&+\left(\frac{1}{2}z_1^2+\frac{1}{2}z_2^2+z_1z_2\right)^2\\
=& \frac{1}{4}z_1^4-z_1^3z_2+\frac{3}{2}z_1^2z_2^2-z_1z_2^3+\frac{1}{4}z_2^4\\
&-\frac{3}{2}z_1^2+3z_1^2z_2^2-\frac{3}{2}z_2^2\\
&+ \frac{1}{4}z_1^4+z_1^3z_2+\frac{3}{2}z_1^2z_2^2+z_1z_2^3+\frac{1}{4}z_2^4\\
=& -z_1^2+6z_1^2z_2^2-z_2^2=-P(z).
\end{split}
\end{equation}
Thus by applying the change of variables $z\mapsto Rz$ into the integral, 
\begin{equation}
\begin{split}
&\int_{\mathbb{R}^3}\left( z_1^4-3z_1^2z_2^2\right) g(z)\,dz\\
=&\frac{1}{2}\int_{\mathbb{R}^3}P(z) g(z)\,dz\\
=& \frac{1}{2}\int_{\mathbb{R}^3}P(Rz)g(Rz)\,dz\\
=& -\frac{1}{2}\int_{\mathbb{R}^3}P(z)g(z)\,dz,
\end{split}
\end{equation}
so the integral is its own negative and hence zero.
\end{proof}

\begin{proposition}
Let $g \in L^1(\mathbb{R}^3)$ be isotropic and $Q \in \sm3$. Then 
\begin{equation}
\begin{split}
\int_{\r3}g(z)\h{z}_i\h{z}_j\,dz =&\delta_{ij}\int_{\r3}g(z)\h{z}_1^2\,dz,\\
\int_{\r3}g(z)(\h{z}\cdot Q\h{z})^2\,dz =& \frac{2}{3}\int_{\r3}g(z)\h{z}_1^4\,dz|Q|^2.
\end{split} 
\end{equation}
\end{proposition}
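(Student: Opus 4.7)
The plan is to exploit the isotropy of $g$ to reduce both integrals to scalar multiples of fixed isotropic tensors, then evaluate by contracting indices using the traceless and symmetric properties of $Q$.

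For the first identity, I would observe that the matrix-valued integral $M_{ij}=\int_{\r3}g(z)\h{z}_i\h{z}_j\,dz$ is invariant under $M\mapsto RMR^T$ for every $R\in\so3$, since the change of variables $z\mapsto R^Tz$ preserves $g$ (by isotropy) and the Lebesgue measure. The only symmetric matrices commuting with every rotation are scalar multiples of the identity, so $M_{ij}=\lambda\delta_{ij}$ for some $\lambda$. Taking the trace gives $3\lambda=\int_{\r3}g(z)|\h{z}|^2\,dz=\int_{\r3}g(z)\,dz$, while taking the $(1,1)$ entry gives $\lambda=\int_{\r3}g(z)\h{z}_1^2\,dz$, which is the desired identity.

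For the second identity, I would expand
\begin{equation}
\int_{\r3}g(z)(\h{z}\cdot Q\h{z})^2\,dz=\sum_{i,j,k,l=1}^3 Q_{ij}Q_{kl}\,T_{ijkl},\qquad T_{ijkl}:=\int_{\r3}g(z)\h{z}_i\h{z}_j\h{z}_k\h{z}_l\,dz.
\end{equation}
By the same isotropy argument as before, $T$ is a fully symmetric isotropic fourth-order tensor, hence
\begin{equation}
T_{ijkl}=A\big(\delta_{ij}\delta_{kl}+\delta_{ik}\delta_{jl}+\delta_{il}\delta_{jk}\big)
\end{equation}
for a single scalar $A$. Evaluating at $i=j=k=l=1$ gives $3A=\int_{\r3}g(z)\h{z}_1^4\,dz$, so $A=\tfrac{1}{3}\int_{\r3}g(z)\h{z}_1^4\,dz$. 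Substituting back, and using $\Tr Q=0$ together with $Q=Q^T$, I obtain
\begin{equation}
\sum_{ijkl}Q_{ij}Q_{kl}\delta_{ij}\delta_{kl}=(\Tr Q)^2=0,\qquad \sum_{ijkl}Q_{ij}Q_{kl}\delta_{ik}\delta_{jl}=\sum_{ijkl}Q_{ij}Q_{kl}\delta_{il}\delta_{jk}=|Q|^2,
\end{equation}
giving the integral as $2A|Q|^2=\tfrac{2}{3}|Q|^2\int_{\r3}g(z)\h{z}_1^4\,dz$.

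There is no real obstacle here; the only point worth care is justifying that $T$ has the claimed isotropic form. One clean way is to quote the classical characterisation of isotropic fourth-order tensors with the full index symmetry $T_{ijkl}=T_{(ijkl)}$; alternatively, one can decompose $T$ against the basis $\delta_{ij}\delta_{kl}+\delta_{ik}\delta_{jl}+\delta_{il}\delta_{jk}$, $\delta_{ij}\delta_{kl}-\delta_{ik}\delta_{jl}$, etc., and eliminate the antisymmetric pieces using the full symmetry of $T$ in all four indices (which follows from the symmetry of $\h{z}_i\h{z}_j\h{z}_k\h{z}_l$). The previous lemma, which states $\int_{\r3}(z_1^4-3z_1^2z_2^2)g(z)\,dz=0$, provides an independent check: it is exactly the relation $T_{1111}=3T_{1122}$ needed for consistency of the formula $A=\tfrac{1}{3}\int g\h{z}_1^4=\int g\h{z}_1^2\h{z}_2^2$.
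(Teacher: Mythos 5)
Your proof is correct, but it reaches the second identity by a different route than the paper. The paper never introduces the fourth-moment tensor: it observes that $Q\mapsto\int_{\r3}g(z)(\h{z}\cdot Q\h{z})^2\,dz$ is a frame-indifferent quadratic form on $\sm3$, hence of the form $c_1\Tr(Q)^2+c_2\Tr(Q^2)$, and then pins down $c_2$ by testing $Q=e_1\otimes e_1-e_2\otimes e_2$, at which point the preceding lemma $\int_{\r3}(z_1^4-3z_1^2z_2^2)g(z)\,dz=0$ (proved there by a $\pi/4$ rotation) is exactly what converts $\int g(\h{z}_1^4-\h{z}_1^2\h{z}_2^2)$ into $\tfrac{2}{3}\int g\,\h{z}_1^4$. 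You instead expand against $T_{ijkl}=\int_{\r3}g(z)\h{z}_i\h{z}_j\h{z}_k\h{z}_l\,dz$ and invoke the classical representation of fully symmetric isotropic rank-4 tensors, $T_{ijkl}=A(\delta_{ij}\delta_{kl}+\delta_{ik}\delta_{jl}+\delta_{il}\delta_{jk})$, which determines everything from $T_{1111}=3A$ and makes the lemma redundant (it is just the relation $T_{1111}=3T_{1122}$, as you note). Your contraction using $\Tr Q=0$ and $Q=Q^T$ is carried out correctly, and your first identity (isotropic symmetric matrix is a multiple of $I$) is essentially the paper's parity-plus-rotation argument in different packaging. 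What each buys: your version is shorter and subsumes the auxiliary lemma, but it leans on the rank-4 isotropic tensor classification, which you should either cite precisely or prove via the elimination argument you sketch; the paper's version stays at the level of explicit rotations and the invariant theory of quadratic forms in $Q$ (which it needs anyway, in the guise of the $c_1\Tr(Q)^2+c_2\Tr(Q^2)$ decomposition), at the cost of the separate lemma and a specific trial $Q$.
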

\begin{proof}
The first equality is more straightforward. If $i\neq j$, then the integral vanishes since it is odd in one of the components. If $i=j$, then the integrand is dependent only on $i$ since applying a rotation that swaps between labelling bases leaves the integral unchanged.

For the second inequality, we have that, as a map from $\sm3$ to $\mathbb{R}$, $\int_{\r3}g(z)(z\cdot Qz)^2\,dz$ is immediately frame indifferent and quadratic in $Q$. Therefore it must be of the form $c_1\text{Tr}(Q)^2+c_2\text{Tr}(Q^2)$. Since $Q$ is trace free however, it suffices to evaluate $c_2$. In this case, we test $Q=e_1\otimes e_1-e_2\otimes e_2$. Then $Q\h{z}\cdot \h{z} = \h{z}_1^2-\h{z}_2^2$ and $|Q|^2=2$. Thus
\begin{equation}
\begin{split}
2c_1=&\int_{\r3}(\h{z}_1^2-\h{z}_2^2)^2g(z)\,dz\\
=& \int_{\r3}\left(\h{z}_1^4+\h{z}_2^4-2\h{z}_1^2\h{z}_2^2\right)\,dz\\
=& 2\int_{\r3}(\h{z}_1^4-\h{z}_1^2z_2^2)g(z)\,dz\\
=& \frac{4}{3}\int_{\r3}\h{z}_1^4g(z)\,dz
\end{split}
\end{equation}
using the previous lemma. 
\end{proof}

\begin{corollary}\label{corollaryk0}
For $K$ as given in \eqref{eqLondonDispersion}, and $K_0=\int_{\r3}K(z)\,dz$, then $K_0Q\cdot Q=k_0|Q|^2$, where
\begin{equation}
k_0=\int_{\r3}g_1(z)\,dz+\int_{\r3}g_2(z)\h{z}_1^2\,dz + \frac{2}{3}\int_{\r3}g_3(z)\h{z}_1^4\,dz
\end{equation}
\end{corollary}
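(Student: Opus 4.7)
The plan is to apply the preceding proposition term by term to the three pieces of the decomposition \eqref{eqLondonDispersion}. Since for $Q\in\sm3$ we have $K(z)Q\cdot Q = g_1(z)|Q|^2 + g_2(z)|Q\hat{z}|^2 + g_3(z)(\hat{z}\cdot Q\hat{z})^2$, the integral $K_0Q\cdot Q$ splits as a sum of three integrals which I would handle separately.

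The first term is immediate: $\int_{\r3}g_1(z)|Q|^2\,dz = |Q|^2\int_{\r3}g_1(z)\,dz$, contributing the first summand of $k_0$. For the second term, I would expand in indices, writing
\begin{equation}
\int_{\r3}g_2(z)|Q\hat{z}|^2\,dz = \sum_{i,j,k}Q_{ij}Q_{ik}\int_{\r3}g_2(z)\hat{z}_j\hat{z}_k\,dz.
\end{equation}
By the first identity of the preceding proposition, $\int g_2(z)\hat{z}_j\hat{z}_k\,dz = \delta_{jk}\int g_2(z)\hat{z}_1^2\,dz$, and then $\sum_{i,j}Q_{ij}Q_{ij}=|Q|^2$ produces the second summand of $k_0$.

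The third term is exactly the content of the second identity in the preceding proposition applied to $g_3$: $\int_{\r3}g_3(z)(\hat{z}\cdot Q\hat{z})^2\,dz = \tfrac{2}{3}|Q|^2\int_{\r3}g_3(z)\hat{z}_1^4\,dz$, giving the third summand of $k_0$. Summing the three contributions gives $K_0Q\cdot Q = k_0|Q|^2$ with the claimed expression for $k_0$.

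There is no real obstacle here, since the hard work (the isotropic moment identities and the cancellation $\int(z_1^4-3z_1^2z_2^2)g(z)\,dz=0$) was already carried out in the preceding lemma and proposition; this corollary is just an application. The only thing to be careful about is that $Q$ is traceless, which is why the possible $c_1\Tr(Q)^2$ contribution in the quadratic-form representation drops out, leaving only the $|Q|^2$ coefficient.
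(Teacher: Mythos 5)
Your proposal is correct and follows exactly the route the paper intends: the paper's proof simply states that the corollary "follows immediately from the previous proposition," and your term-by-term application of the two moment identities (with the observation that the second moment identity handles the $g_2$ term via $\sum_{i,j}Q_{ij}Q_{ij}=|Q|^2$) is just the spelled-out version of that. No gaps.
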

\begin{proof}
This follows immediately from the previous proposition. 
\end{proof}

In this case the bulk energy is given by $\psi_s(Q)-\frac{k_0}{2}|Q|^2-c_5$. Depending on the value of $k_0$, this is minimised either at $Q=0$, or $Q=s^*\left(n\otimes n-\frac{1}{3}I\right)$ for any $n \in \s2$ and $s^*>0$ depending on $k_0$ \cite{fatkullin2005critical}. We consider only the latter case. In this case with a nematic ground state, the minimising manifold $\mm$ is readily identified with projective space. By identifying $\mm$ with $\mathbb{R}P^2$, if a lifting for $Q \in W^{1,2}(\Omega,\mm)\to W^{1,2}(\Omega,\s2)$ exists, we say $Q$ is {\it orientable} and obtain the Oseen-Frank energy for a director field $n\in W^{1,2}(\Omega,\s2)$. This is always possible, for example, when the boundary data $Q_0$ is orientable and $\Omega$ is simply connected \cite{ball2008orientable}.

\begin{proposition}\label{propFrankConstants}
Let $Q=s^*\left(n\otimes n-\frac{1}{3}I\right)$ for some $n \in W^{1,2}(\Omega,\s2)$. For $K$ as given in \eqref{eqLondonDispersion}, the elastic component of the energy can be written 
\begin{equation}
L\nabla Q \cdot \nabla Q =K_1 (\text{div }n)^2 + K_2 |n\times \text{curl }n|^2 + K_3(n\cdot \text{curl }n)^2
\end{equation}
with
\begin{equation}
\begin{split}
\frac{1}{(s^*)^2}K_1=&2G^1_{1,0,0}+G^2_{1,1,0}+G^2_{2,0,0}+G^3_{3,0,0}-G^3_{2,1,0},\\
\frac{1}{(s^*)^2}K_2=& 2\left(G^1_{1,0,0}+G^2_{1,1,0}+G^3_{2,1,0}-G^3_{1,1,1}\right),\\
K_3=& K_1.
\end{split}
\end{equation}
The constants $G^n_{ijk}$ are defined by 
\begin{equation}
G^n_{ijk}=\int_{\r3}\frac{1}{|z|^{2(n-1)}}g_I(z)z_1^{2i}z_2^{2j}z_3^{2k}\,dz.
\end{equation}
\end{proposition}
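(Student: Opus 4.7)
The plan is to evaluate $L\nabla Q\cdot\nabla Q$ pointwise via the defining identity
\begin{equation*}
L\nabla Q\cdot \nabla Q = \int_{\r3} K(z)(z\cdot \nabla Q)\cdot(z\cdot \nabla Q)\,dz,
\end{equation*}
substituting $Q=s^\ast(n\otimes n-\tfrac{1}{3}I)$ and then identifying the resulting quadratic form in $\nabla n$ with the Oseen-Frank density. Writing $m=m(z):=(z\cdot\nabla)n$, the constraint $|n|=1$ forces $n\cdot\partial_k n=0$ and hence $n\cdot m=0$, which gives $z\cdot\nabla Q=s^\ast(n\otimes m+m\otimes n)$. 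A direct expansion of each of the three ingredients in the kernel formula \eqref{eqLondonDispersion} then yields, pointwise in $z$,
\begin{equation*}
K(z)(z\cdot\nabla Q)\cdot(z\cdot\nabla Q) = (s^\ast)^2\Bigl[2g_1|m|^2 + g_2\bigl((m\cdot\h{z})^2 + (n\cdot\h{z})^2|m|^2\bigr) + 4g_3(m\cdot\h{z})^2(n\cdot\h{z})^2\Bigr],
\end{equation*}
using $n\cdot m=0$ to eliminate the cross terms.

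Each of the three resulting integrals over $\r3$ is evaluated using isotropic moment identities. The $g_1$ integral only requires the second moment $\int g_1\,z_a z_b\,dz=\delta_{ab}G^1_{1,0,0}$, giving a term proportional to $|\nabla n|^2$. For the $g_2$ integrals, the relevant fourth-order isotropic moment decomposes as $C(\delta_{ab}\delta_{cd}+\delta_{ac}\delta_{bd}+\delta_{ad}\delta_{bc})$ with $C=\tfrac{1}{3}G^2_{2,0,0}=G^2_{1,1,0}$; contracting against $\partial_k n_a\,\partial_l n_b$ and $n_p n_q\,\partial_k n_a\,\partial_l n_b$, and then applying $n\cdot\partial_k n=0$ and $|n|=1$, produces a linear combination of $|\nabla n|^2$, $(\text{div}\,n)^2$, $\mathrm{tr}((\nabla n)^2)$ and $|(n\cdot\nabla)n|^2$. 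For the $g_3$ integral, the sixth-order isotropic moment $\int (g_3/|z|^4) z_{i_1}\cdots z_{i_6}\,dz$ equals the fully symmetric rank-six $\delta$-tensor (the sum of the fifteen pairings of six indices) times the single constant $\tfrac{1}{15}G^3_{3,0,0}=\tfrac{1}{3}G^3_{2,1,0}=G^3_{1,1,1}$; of those fifteen pairings, an inclusion-exclusion count shows exactly five survive the orthogonality $n\cdot m=0$ (namely the pairings that never match a $\partial n$-component index with an $n$-component index), and they again collapse to the same four scalar invariants listed above.

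Conversion to the Oseen-Frank basis then uses the pointwise identities valid for $|n|=1$, namely $(n\cdot\nabla)n=-n\times\mathrm{curl}\,n$, $|\mathrm{curl}\,n|^2=(n\cdot\mathrm{curl}\,n)^2+|n\times\mathrm{curl}\,n|^2$, and
\begin{equation*}
|\nabla n|^2 = (\text{div}\,n)^2 + (n\cdot\mathrm{curl}\,n)^2 + |n\times\mathrm{curl}\,n|^2 + \bigl[\mathrm{tr}((\nabla n)^2) - (\text{div}\,n)^2\bigr],
\end{equation*}
where the last bracket is a null Lagrangian (saddle-splay divergence) that integrates to zero against fixed Dirichlet data and hence does not contribute to a Frank constant. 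Collecting coefficients of $(\text{div}\,n)^2$, $(n\cdot\mathrm{curl}\,n)^2$ and $|n\times\mathrm{curl}\,n|^2$ (using $\tfrac{1}{3}G^2_{2,0,0}=G^2_{1,1,0}$ and $\tfrac{1}{5}G^3_{3,0,0}=G^3_{2,1,0}$, $\tfrac{1}{15}G^3_{3,0,0}=G^3_{1,1,1}$ to rewrite the outputs into the form appearing in the proposition) yields the stated expressions for $K_1$ and $K_2$. The equality $K_3=K_1$ appears because splay and bend inherit exactly the same combination of moment integrals: this is the algebraic signature of the uniaxial structure $z\cdot\nabla Q=s^\ast(n\otimes m+m\otimes n)$, which couples $n$ to itself only through a single rank-two symmetric tensor.

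The principal obstacle is the bookkeeping of the $g_3$ integral --- identifying the five surviving pairings out of fifteen, and re-expressing the resulting combination (originally of the form $\tfrac{4}{15}G^3_{3,0,0}(\ldots)$) as the ``natural'' triple $(G^3_{3,0,0},\,G^3_{2,1,0},\,G^3_{1,1,1})$ using the moment relations. Once this algebra is performed and the null-Lagrangian piece is set aside, the identification of $K_1$, $K_2$, and $K_3=K_1$ with the claimed expressions is mechanical.
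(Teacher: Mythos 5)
Your computation is correct, but it follows a genuinely different route from the paper. The paper never substitutes $Q=s^*(n\otimes n-\frac13 I)$ into the kernel directly: it observes that $\nabla Q\mapsto L\nabla Q\cdot\nabla Q$ is frame indifferent, invokes the representation theorem for isotropic quadratic forms to write it as $L_1Q_{\alpha\beta,\gamma}Q_{\alpha\beta,\gamma}+L_2Q_{\alpha\beta,\beta}Q_{\alpha\gamma,\gamma}+L_3Q_{\alpha\beta,\gamma}Q_{\alpha\gamma,\beta}$, quotes the standard conversion $\frac{1}{(s^*)^2}K_1=2L_1+L_2+L_3$, $\frac{1}{(s^*)^2}K_2=2L_1$, $K_3=K_1$, and then extracts $2L_1$ and $2L_1+L_2+L_3$ by testing the two constant gradients $\nabla Q=(e_1\otimes e_1-e_2\otimes e_2)\otimes e_3$ and $(e_1\otimes e_1-e_2\otimes e_2)\otimes e_2$, which only requires the moments already named in the statement. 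Your approach is self-contained: you expand $z\cdot\nabla Q=s^*(n\otimes m+m\otimes n)$, and your pointwise identity, the fourth-moment contraction, and the sixth-moment count (exactly the five pairings that never couple a $\partial n$-component index to an $n$-component index, yielding $|\nabla n|^2+(\operatorname{div}n)^2+\operatorname{tr}((\nabla n)^2)+2|(n\cdot\nabla)n|^2$) are all correct; collecting terms gives splay $=$ bend $=2G^1_{1,0,0}+4G^2_{1,1,0}+12G^3_{1,1,1}$ and twist $=2G^1_{1,0,0}+2G^2_{1,1,0}+4G^3_{1,1,1}$, which coincide with the stated $K_1=K_3$ and $K_2$ after using $G^2_{2,0,0}=3G^2_{1,1,0}$, $G^3_{3,0,0}=15G^3_{1,1,1}$, $G^3_{2,1,0}=3G^3_{1,1,1}$. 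What your route buys is transparency about the saddle-splay term: the null Lagrangian $\operatorname{tr}((\nabla n)^2)-(\operatorname{div}n)^2$ appears with a nonzero coefficient, so the claimed formula is an identity for the energy with fixed Dirichlet data rather than a pointwise one — a point the paper's citation of the $L_i\to K_i$ relations silently absorbs; what the paper's route buys is avoiding the sixth-moment bookkeeping entirely. Two small remarks: the null Lagrangian integrates to a constant determined by the boundary trace, not necessarily to zero, though this does not affect your conclusion; and your labelling ($K_2$ on the twist term $(n\cdot\operatorname{curl}n)^2$, $K_3$ on bend) matches the convention actually used in the paper's proof via $K_2=2(s^*)^2L_1$, even though the displayed statement pairs the constants with the terms in the opposite order.
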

\begin{proof}
Denote the components of $\nabla Q$ as $Q_{\alpha\beta,\gamma}$. Then the bilinear form is 
\begin{equation}
\begin{split}
L\nabla Q \cdot \nabla Q = &\int_{\r3}K(z)_{i_1i_2,j_1j_2}z_{i_3}z_{j_3}Q_{i_1i_2,i_3}Q_{j_1j_2,j_3}\,dz.
\end{split}
\end{equation}
It is immediate to see that this is a frame indifferent function of $\nabla Q$, following from the isotropy of $K$. In fact this holds for generally any $Q \in W^{1,2}(\Omega,\sm3)$, not just those with values in $\mm$ which are orientable. In this case, by \cite{mori1999multidimensional} we know that the bilinear form can be written as 
\begin{equation}
L_1Q_{\alpha\beta,\gamma}Q_{\alpha\beta,\gamma}+L_2Q_{\alpha\beta,\beta}Q_{\alpha\gamma,\gamma}+L_3Q_{\alpha\beta,\gamma}Q_{\alpha\gamma,\beta}.
\end{equation}
In the case when $Q$ gives an orientable line field, so $Q=s^*\left(n\otimes n-\frac{1}{3}I\right)$ for $n \in W^{1,2}(\Omega,\s2)$, we have that 
\begin{equation}
L\nabla Q \cdot \nabla Q = K_1 (\text{div }n)^2 + K_2 |n\times \text{curl }n|^2 + K_3(n\cdot \text{curl }n)^2, 
\end{equation}  
where the Frank constants $K_i$ are related to $L_i$ as in \cite{ballCambridgeLectures} by 
\begin{equation}
\begin{split}
\frac{1}{(s^*)^2}K_1=&2L_1+L_2+L_3,\\
\frac{1}{(s^*)^2}K_2=& 2L_1,\\
K_3=&K_1.
\end{split}
\end{equation}
It thus suffices to test values of $\nabla Q$ to obtain relations between the tensor $L$ and constants $L_1,L_2,L_3$, which then give the Frank constants.
First we try $Q(x)=(e_1\otimes e_1-e_2\otimes e_2)x_3$, so $\nabla Q = (e_1\otimes e_1  - e_2 \otimes e_2 )\otimes e_3$. Then $Q_{\alpha\beta,\gamma}Q_{\alpha\gamma,\beta}=Q_{\alpha\beta,\beta}Q_{\alpha\gamma,\gamma}=0$ and $Q_{\alpha\beta,\gamma}Q_{\alpha\beta,\gamma}=2$. Then 
\begin{equation}
\begin{split}
\frac{1}{(s^*)^2}K_2=&2L_1\\
=&L\nabla Q \cdot \nabla Q \\
=& \int_{\r3}K(z)_{1111}z_{3}^2-2K(z)_{1122}z_3^2+K(z)_{2222}z_3^2\,dz\\
=& 2\int_{\r3}K_{1111}z_3^2-K(z)_{1122}z_3^2\,dz\\
=& 2\int_{\r3}g_1(z)z_3^2+\frac{g_2(z)}{|z|^2}z_1^2z_3^2+\frac{g_3(z)}{|z|^4}z_1^4z_3^2-\frac{g_3(z)}{|z|^4}z_1^2z_2^2z_3^2\,dz\\
=& 2\left(G^1_{1,0,0}+G^2_{1,1,0}+G^3_{2,1,0}-G^3_{1,1,1}\right)
\end{split}
\end{equation}
Next we trial $\nabla Q(x)=(e_1\otimes e_1-e_2\otimes e_2)\otimes e_2$. In this case $Q_{\alpha\beta,\gamma}Q_{\alpha\gamma,\beta}=Q_{\alpha\beta,\beta}Q_{\alpha\gamma,\gamma}=1$, $Q_{\alpha\beta,\gamma}Q_{\alpha\beta,\gamma}=2$. Therefore
\begin{equation}
\begin{split}
\frac{1}{(s^*)^2}K_1=&2L_1+L_2+L_3\\
=& L\nabla Q \cdot \nabla Q\\
=& \int_{\r3}K(z)_{1111}z_{2}^2-2K(z)_{1122}z_2^2+K_{2222}z_2^2\,dz\\
=& \int_{\r3}g_1(z)z_2^2+\frac{g_2(z)}{|z|^2}z_1^2z_2^2+\frac{g_3(z)}{|z|^4}z_1^4z_2^2\\
&-2\frac{g_3(z)}{|z|^4}z_1^2z_2^4+g_1(z)z_2^2+\frac{g_2(z)}{|z|^2}z_2^4+\frac{g_3(z)}{|z|^4}z_2^6\,dz. \\
=& 2G^1_{1,0,0}+G^2_{1,1,0}+G^2_{2,0,0}+G^3_{3,0,0}-G^3_{2,1,0}.
\end{split}
\end{equation}
\end{proof}

\begin{remark}\label{remarkNumericalConstants}
For the sake of illustration, we now consider the simple case where $g_i(z)=c_ig(z)$, and 
\begin{equation}
g(z)=\left\{\begin{array}{l l} 1/|z|^6 \hspace{1cm}& |z|>0.1\\ 0 & \text{ else}\end{array}\right.
\end{equation} for constants $c_i$ to illustrate. This is consistent with the quadratic part of the London dispersion forces formula for uniaxial molecules \cite{london1930theorie}, with a cutoff region to avoid singularities as employed in the derivation of Maier-Saupe. It should be noted however that while the London dispersion forces can be expressed in the form of \eqref{eqLondonDispersion} with $g_i=c_ig$ as here, the exact constants $c_i$ unfortunately give rise to a bilinear form lacking the necessary coercivity for our analysis (explicitly, $c_1=\frac{C}{9}$, $c_2=-\frac{2C}{3}$, $c_3=C$ for a material constant $C>0$). Firstly, from the relationship in \Cref{corollaryk0}, we numerically find that 
\begin{equation}
k_0\approx 4058c_1+1353c_2+811c_3.
\end{equation}
Then by \Cref{propFrankConstants} we find that $G$ constants first, which evaluate as 
\begin{equation}
\begin{array}{r l r l r l}
G^1_{1,0,0}\approx & 41.32c_1, \hspace{1cm}& G^2_{1,1,0}\approx & 8.264c_2,\hspace{1cm} &G^2_{2,0,0}\approx & 24.79c_2\\
G^3_{1,1,1}\approx & 1.181c_3, &G^3_{2,1,0}\approx & 3.542c_3, &G^3_{3,0,0}\approx & 17.71c_3
\end{array}
\end{equation}
This gives our Frank constants as
\begin{equation}
\begin{split}
\frac{1}{(s^*)^2}K_1\approx &83c_1+33c_2+14c_3,\\
\frac{1}{(s^*)^2}K_2\approx &83c_1+17c_2+4.7c_3,\\
K_3=&K_1.
\end{split}
\end{equation}
In particular, if $c_i$ are all roughly equal, so $c_i \approx c$ for $i=2,3$, we can quantify the accuracy of the one-constant approximation, $K_1=K_2=K_3$. The relative accuracy of this approximation is $\frac{|K_1-K_2|}{K_1}\approx\frac{17c_2+9.4c_3}{83c_1+33c_2+14c_3}\approx\frac{(17+9.4)c}{(83+33+14)c} \approx 0.2$. Alternatively if the isotropic constant $c_1$ dominates, so $c_1\geq c_i\geq 0$ for $i=2,3$, then $\frac{|K_1-K_2|}{K_1}\approx\frac{17c_2+9.4c_3}{83c_1+33c_2+14c_3}\leq \frac{(17+9.4)c_1}{83c_1}\approx 0.3$. 
\end{remark}

\section*{Acknowledgements}
The author would like to thank John M. Ball, Epifanio G. Virga, Claudio Zannoni and Giacomo Canevari for insightful discussions that have benefited this work. The research leading to these results has received funding from the European Research Council under the European Union's Seventh Framework Programme (FP7/2007-2013) / ERC grant agreement n$^{\circ}$ 291053.

\bibliographystyle{acm}
\bibliography{bibl}

\end{document}